\numberwithin{equation}{section}
\newtheorem{theorem}{Theorem}[section]
\newtheorem{corollary}[theorem]{Corollary}
\newtheorem{lemma}[theorem]{Lemma}
\newtheorem{proposition}[theorem]{Proposition}
\theoremstyle{definition}
\newtheorem{definition}[theorem]{Definition}
\theoremstyle{remark}
\newtheorem{remark}[theorem]{Remark}
\newtheorem*{question}{Question}
\newcommand{\lra}{\longrightarrow}
\title{The EKOR stratification on the Siegel modular variety with parahoric level structure}
\author{Manuel Hoff}
\address{Fakult\"at f\"ur Mathematik, Universit\"at Bielefeld, 33501 Bielefeld, Germany}
\email{manuel.hoff@uni-bielefeld.de}
\begin{document}



\maketitle

\begin{prelims}

\DisplayAbstractInEnglish

\bigskip

\DisplayKeyWords

\medskip

\DisplayMSCclass







\end{prelims}


\newpage

\setcounter{tocdepth}{1}

\tableofcontents


    \section{Introduction}

Fix a rational prime $p$, a positive integer $g$, an auxiliary integer $N \geq 3$ that is not divisible by $p$ and a non-empty subset $J \subseteq \ZZ$ with $J + 2g\ZZ = J$ and $-J = J$.
Then we are interested in studying the \emph{Siegel modular variety with parahoric level structure}
\[
    \calA_{g, J, N}
\]
over $\Zp$.
This is a quasi-projective scheme that parametrizes certain polarized chains of type $J$ of $g$\nobreakdash-dimensional Abelian varieties with full level $N$ structure; its moduli description was first given by De Jong \cite{de-jong}
and then in full generality by Rapoport and Zink \cite{rapoport-zink-96}.
The scheme $\calA_{g, J, N}$ is the prime example of an integral model of a \emph{Shimura variety} at a place of parahoric bad reduction.
While the generic fiber of $\calA_{g, J, N}$ is smooth, its special fiber typically is singular.
This is related to the fact that the $p$-torsion of an Abelian variety in characteristic $p$ is not \'etale.

Now fix an algebraic closure $\Fpbar$ of $\Fp$, let $\Zpbrev \coloneqq W(\Fpbar)$ be the ring of $p$-typical Witt vectors of $\Fpbar$, and write $\Qpbrev \coloneqq \Zpbrev[1/p]$ for its fraction field.
Denote by $\sigma$ the Frobenius on $\Qpbrev$.
Also fix a symplectic $\Qp$-vector space $V$ of dimension $2g$ and a self-dual lattice chain $(\Lambda_i)_{i \in J}$ in $V$.
Then there exists a natural map, called the \emph{central leaves map},
\[
    \Upsilon \colon \calA_{g, J, N} \roundbr*{\Fpbar} \lra \breve{K}_{\sigma} \backslash \GSp(V)(\Qpbrev);
\]
here $\breve{K} \subseteq \GSp(V)(\Qpbrev)$ is the stabilizer of the lattice chain $(\Lambda_i)_i$, and modding out by $\breve{K}_{\sigma}$ is notation for taking the quotient by the $\sigma$-conjugation action $g \colon x \mapsto g x \sigma(g)^{-1}$.
It is roughly given by sending a polarized chain of Abelian varieties to the twisted conjugacy class corresponding to the Frobenius $\Phi$ of the associated rational Dieudonn\'e module; see the work of Oort \cite{oort-foliations} and also He and Rapoport \cite{he-rapoport}.
The image of this map is given by $\breve{K}_{\sigma} \backslash X$, where
\[
    X = \bigcup_{w \in \Adm_{g, J}} \breve{K} w \breve{K} \subseteq \GSp(V)\left(\Qpbrev\right)
\]
is a finite union of double cosets that is indexed over the so-called \emph{admissible set} $\Adm_{g, J}$.

The fibers of the composition
\[
\lambda \colon \calA_{g, J, N} \roundbr*{\Fpbar} \stackrel{\Upsilon}\lra \breve{K}_{\sigma} \backslash X \lra \breve{K} \backslash X / \breve{K} \cong \Adm_{g, J}
\]
define a decomposition of $(\calA_{g, J, N})_{\Fpbar}$ into smooth locally closed subschemes, the so-called \emph{Kottwitz--Rapoport $($KR\,$)$ stratification}.
In fact Rapoport and Zink construct the following data: 

\begin{itemize}
    \item
    a flat projective scheme $\bbM^{\loc}$ over $\Zp$ with an action of the parahoric $\Zp$-group scheme $\calG$ for $\GSp(V)$ given by the lattice chain $(\Lambda_i)_i$, 

    \item
    a smooth morphism
    \[
        \calA_{g, J, N} \lra \squarebr*{\calG \backslash \bbM^{\loc}}
    \]
    that parametrizes the Hodge filtration in the de Rham cohomology of a polarized chain of Abelian varieties.
\end{itemize}
The scheme $\bbM^{\loc}$ is called the \emph{local model}; it parametrizes isotropic chains of $g$-dimensional subspaces $C_i \subseteq \Lambda_i$ and satisfies $\bbM^{\loc}(\Fpbar) = \breve{K} \backslash X$. One recovers the map $\lambda$  by evaluating the smooth morphism above on $\Fpbar$-points; see \cite[Section 7(ii)]{he-rapoport}.

He and Rapoport also consider the composition
\[
    \upsilon \colon \calA_{g, J, N} \roundbr*{\Fpbar} \lra \breve{K}_{\sigma} \backslash X \lra \breve{K}_{\sigma} \backslash (\breve{K} \backslash X), 
\]
where $\breve{K}_1 \subseteq \breve{K}$ is the pro-unipotent radical.
They observe that the codomain of $\upsilon$ is a finite set and call its fibers \emph{Ekedahl--Kottwitz--Oort--Rapoport $($EKOR\,$)$ strata}.

In the hyperspecial case $J = 2g \ZZ$,  the EKOR stratification is also just called the \emph{Ekedahl--Oort $($EO\,$)$ stratification} and was first considered by Oort \cite{oort-eo}.
Two points in $\calA_{g, 2g\ZZ, N}(\Fpbar)$ given by two (principally polarized) Abelian varieties lie in the same EO stratum if and only if their $p$-torsion group schemes are isomorphic.
Viehmann and Wedhorn \cite{viehmann-wedhorn} realize the EO stratification as the fibers of a smooth morphism from $(\calA_{g, 2g\ZZ, N})_{\Fp}$ into an algebraic stack that parametrizes $F$-zips in the sense of Moonen and Wedhorn \cite{moonen-wedhorn} with symplectic structure.

\begin{question}
    Is it possible for general $J$ to realize the map $\upsilon$, or maybe even $\Upsilon$, as a smooth morphism from $\calA_{g, J, N}$ into some naturally defined algebraic stack?
\end{question}

The existence of such a smooth morphism would in particular give a new proof of the smoothness of the EKOR strata and the closure relations between them.
More importantly, it also provides a tool that may be applied to further study the geometry of $\calA_{g, J, N}$.

\vspace*{1em}

Let us give an overview of results that have been obtained so far (and that we are aware of), also considering more general Shimura varieties than the Siegel modular variety:
\begin{itemize}
    \item
    Moonen and Wedhorn \cite{moonen-wedhorn} introduce the notion of an $F$-zip that is a characteristic $p$ analogue of the notion of a Hodge structure.
    Given an Abelian variety $A$ over some $\Fp$-algebra $R$, the de Rham cohomology $H^1_{\dR}(A/R)$ naturally carries the structure of an $F$-zip.
    If $R$ is perfect, then the datum of this $F$-zip is equivalent to the datum of the Dieudonn\'e module of the $p$-torsion $A[p]$.
    Pink, Wedhorn and Ziegler \cite{pink-wedhorn-ziegler} define a group-theoretic version of the notion of an $F$-zip.

    Viehmann and Wedhorn \cite{viehmann-wedhorn} define a moduli stack of $F$-zips with polarization and endomorphism structure in a PEL-type situation with hyperspecial level structure.
    They construct a morphism from the special fiber of the associated Shimura variety to this stack, parametrizing the EO stratification.
    They then show that this morphism is flat and use this to show that the EO strata are non-empty and quasi-affine and to compute their dimension.
    The smoothness of the EO strata was already shown by Vasiu \cite{vasiu-2}.

    Zhang \cite{zhang-eo} constructs a morphism from the special fiber of the Kisin integral model, see \cite{kisin-integral-models}, of a Hodge-type Shimura variety to a stack of group-theoretic zips.
    They then show that this morphism is smooth and thus gives an EO stratification with the desired properties.
    Shen and Zhang \cite{shen-zhang-eo} later generalize this to Shimura varieties of Abelian type.

    Shen, Yu and Zhang \cite{shen-yu-zhang} further generalize this to Shimura varieties of Abelian type at parahoric level, where the construction of integral models is due to Kisin and Pappas; see \cite{kisin-pappas-integral-models}.
    However, they only construct a morphism from each KR stratum of the special fiber of the Shimura variety into a certain stack of group-theoretic zips, parametrizing the EKOR strata contained in this KR stratum.
    Still, they show that this morphism is smooth, thus establishing the smoothness of the EKOR strata.

    Hesse \cite{hesse} considers an explicit moduli stack of polarized chains of $F$-zips and constructs a morphism from the Siegel modular variety into this stack.
    However, it appears that such a morphism is not well behaved
    in the general parahoric situation; see \Cref{rmk:chains-lau-zink}.

    \item
    Xiao and Zhu \cite{xiao-zhu} consider a perfect moduli stack of mixed characteristic local shtukas as well as restricted versions in a hyperspecial situation.
    For a Shimura variety of Hodge type, they construct a morphism from the perfection of its special fiber into the stack of local shtukas and show that it realizes the central leaves map.
    They state that the induced morphism to the moduli stack of restricted local shtukas is perfectly smooth, see \cite[Proposition 7.2.4]{xiao-zhu}, but the proof appears to be incomplete because the diagram used there is not commutative as required.

    Shen, Yu and Zhang \cite{shen-yu-zhang} again generalize this to the parahoric situation.
    Similarly to Xiao and Zhu, they construct a morphism from the perfected special fiber of the Shimura variety to their stack of local shtukas and state that the induced morphism to the moduli stack of restricted local shtukas is perfectly smooth, but their proof, see \cite[Theorem 4.4.3]{shen-yu-zhang}, employs a similar non-commutative diagram.
    This issue is addressed in the errata \cite{shen-yu-zhang-errata}, where Shen, Yu and Zhang state that their proof works with slight modifications; it is not clear to the author why the argument offered in the errata is sufficient to complete the proof.

    \item
    Zink \cite{zink-02} introduces the notion of a display that is a non-perfect version of the notion of a Dieudonn\'e module.
    B\"ultel and Pappas \cite{bueltel-pappas} define a group-theoretic version; this gives a deperfection of the notion of a local shtuka from \cite{xiao-zhu} in the hyperspecial situation.
    In a parahoric Hodge-type situation, Pappas \cite{pappas-parahoric-disps} also gives a definition of group-theoretic displays, however only over $p$-torsion-free $p$-adic rings.
    Pappas also constructs a group-theoretic display on the $p$-completion of a Kisin--Pappas integral model.

\end{itemize}

\subsection{Overview}

Let us explain the content of the present work.
The letter $R$ denotes a $p$-nilpotent ring.

We start by recalling the definition of a (3n-)display in the sense of \cite{zink-02}.

\begin{definition}[\textit{cf.} Definition~\ref{def:disps}] \label{intro-def:disps}
    Let $0 \leq d \leq h$ be integers.
    Then a \emph{display of type $(h, d)$ over $R$} is a tuple $(M, M_1, \Psi)$ that is given as follows: 
    \begin{itemize}
        \item
        $M$ is a finite projective $W(R)$-module of rank $h$.

        \item
        $M_1 \subseteq M$ is a $W(R)$-submodule containing $I_R M$ and such that $M_1/I_R M \subseteq M/I_R M$ is a direct summand of rank $d$.

        \item
        $\Psi \colon \widetilde{M_1} \to M$ is an isomorphism of $W(R)$-modules that we call the \emph{divided Frobenius}.
    \end{itemize}
    Here $W(R)$ denotes the ring of Witt vectors of $R$, $I_R$ denotes the kernel of the projection $W(R) \to R$, and the object $\widetilde{M_1}$ is a certain finite projective $W(R)$-module attached to $(M, M_1)$; see \Cref{def:m-1-tilde} for more details.
    If $R$ is a perfect ring of characteristic $p$, then $\widetilde{M_1}$ agrees with the Frobenius twist $M_1^{\sigma}$.

    The category of displays carries a natural duality and an action of the symmetric monoidal category of tuples $(I, \iota)$ consisting of an invertible $W(R)$-module $I$ and an isomorphism $\iota \colon I^{\sigma} \to I$.
\end{definition}

For positive integers $m$ and $n$ with $m \geq n + 1$, we define the notion of an $(m, n)$-truncated display that is inspired by the restricted local shtukas from \cite[Definition 5.3.1]{xiao-zhu}.
The word \enquote{truncated} refers to the use of truncated Witt vectors; the numbers $m$ and $n$ roughly measure how truncated the module $M$ and the divided Frobenius $\Psi$ are.
One should note that this notion of an $(m, n)$-truncated display is different from the notion of an $n$-truncated display as defined by Lau and Zink in \cite{lau-zink-18}; see \Cref{rmk:relation-lau-zink} for a comparison.
For us it will be crucial to work with the $(m, n)$-truncated objects; see \Cref{rmk:chains-lau-zink}.

The following theorem gives a relation between displays and $p$-divisible groups.

\begin{theorem}[\textit{cf.} \protect{\cite[Theorem 5.1]{lau-10}}, \Cref{thm:disp-p-div}] \label{intro-thm:disp-p-div}
    There is a natural functor
    \[
        \DD \colon \curlybr*{\text{$p$-divisible groups of height $h$ and dimension $d$ over $R$}}^{\op} \lra \curlybr*{\text{displays of type $(h, d)$ over $R$}}.
    \]
   that restricts to an equivalence between formal $p$-divisible groups and $F$-nilpotent displays.
\end{theorem}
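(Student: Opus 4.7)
The plan is to build the functor $\DD$ via crystalline Dieudonné theory and then deduce the equivalence statement by reduction to Zink's original theorem. Since the result is explicitly attributed to \cite[Theorem 5.1]{lau-10}, I will follow Lau's strategy.

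First I would construct the functor. Given a $p$-divisible group $G$ over $R$, I would evaluate its (covariant) Dieudonné crystal on the pd-thickening $W(R) \to R$ (which exists because $p$ is nilpotent in $R$, so $I_R$ carries canonical divided powers coming from Witt vector arithmetic) to obtain a finite projective $W(R)$-module $M$ of rank $h$. The Hodge filtration $\mathrm{Fil}^1 \subseteq M/I_R M$, i.e.\ the kernel of the natural surjection $M/I_R M \twoheadrightarrow \mathrm{Lie}(G)$, is a direct summand of rank $h-d$; I would define $M_1$ to be its preimage in $M$, which gives exactly the data required by \Cref{intro-def:disps}. The divided Frobenius $\Psi \colon \widetilde{M_1} \to M$ is the more delicate piece: the Frobenius on the crystal gives a map $F \colon M^\sigma \to M$ that kills the Hodge filtration after reduction modulo $p$, so it factors by "dividing by $p$" through $\widetilde{M_1}$. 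Here one must use the universal property characterizing $\widetilde{M_1}$ (as in \Cref{def:m-1-tilde}) to produce $\Psi$ canonically and check that it is an isomorphism, which boils down to the standard fact that $F$ and $V$ on a Dieudonné crystal fit into the short exact sequence $0 \to V M \to M \to \mathrm{Lie}(G) \to 0$.

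Next I would address functoriality and naturality: a morphism $G \to G'$ induces a morphism of crystals in the opposite direction, hence a morphism of the associated displays, and one checks compatibility with the Hodge filtration and Frobenius directly. This yields the contravariant functor $\DD$.

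For the equivalence between formal $p$-divisible groups and $F$-nilpotent displays, I would reduce to Zink's theorem \cite{zink-02}, which gives such an equivalence when the ring $R$ admits a suitable topology making $p$ topologically nilpotent and the nilpotence of the augmentation ideal is compatible with Witt vectors. The strategy is to first treat the case of an $\Fp$-algebra (where $F$-nilpotence of the display matches the condition that the $p$-divisible group is formal, i.e.\ its Verschiebung is topologically nilpotent), and then lift along $R \twoheadrightarrow R/pR$ by deformation theory: deformations of a formal $p$-divisible group are classified by Grothendieck-Messing theory in terms of lifts of the Hodge filtration, and deformations of an $F$-nilpotent display are classified by the same linear-algebraic data. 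Matching these two deformation theories through $\DD$, which is the heart of Lau's argument, yields full faithfulness and essential surjectivity inductively on the exponent of nilpotence of $p$ in $R$.

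The main obstacle I would expect is precisely the last comparison of deformation theories: one has to show that the "Hodge-filtration lift" classifying deformations of $G$ on the $p$-divisible-group side agrees, under $\DD$, with the Hodge-filtration lift classifying deformations on the display side. This is where Lau's careful analysis of the universal $W(R)$-pd-structure and its interaction with the module $\widetilde{M_1}$ is essential, and where the construction of $\Psi$ from the previous paragraph must be shown to be natural in infinitesimal thickenings. Once this compatibility is established, the rest is formal induction.
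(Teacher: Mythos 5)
The paper does not actually prove this statement itself: its proof of \Cref{thm:disp-p-div} is a citation of \cite[Proposition 4.1 and Theorem 5.1]{lau-10} for both the construction of $\DD$ and the equivalence on the formal/$F$-nilpotent locus (the only argument supplied in the paper concerns the rank of $\DD(f)$ for an isogeny, via \Cref{lem:chains-rank}, and that clause is not part of the version of the statement you were given). Your outline of the cited proof --- evaluate the Dieudonné crystal on $W(R) \to R$, let $M_1$ be the preimage of the Hodge filtration, divide the crystalline Frobenius by $p$ to obtain $\Psi$, and deduce the equivalence from Zink's theorem by matching Grothendieck--Messing deformation theory against the deformation theory of nilpotent displays --- has the correct shape of Lau's argument, so in substance you are reconstructing exactly what the paper defers to.

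One substantive slip should be fixed. You use the covariant crystal and take $M_1$ to be the preimage of the kernel of $M/I_R M \to \mathrm{Lie}(G)$, which you correctly identify as a direct summand of rank $h-d$; but then the resulting object is a display of type $(h, h-d)$, not of type $(h,d)$ as the theorem asserts (by \Cref{intro-def:disps} the rank of $M_1/I_RM$ must be $d$; compare also \Cref{rmk:relation-dieudonne}, where over a perfect ring $M_1 = VM$ has $M_1/I_RM$ of rank $d$). Since $\DD$ is contravariant --- as your own observation that morphisms of groups induce morphisms of crystals in the opposite direction already presupposes --- the crystal to evaluate is the contravariant one, whose Hodge filtration $\omega_G \subseteq M/I_RM$ has rank $d$. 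Two smaller points: $W(R) \to R$ is not literally an object of the crystalline site (one evaluates on the compatible system $W_n(R) \to R$ and passes to the limit, and for $p=2$ the canonical divided powers on $I_R$ need extra care), and verifying that $\Psi$ is an isomorphism over a general $R$ requires more than the exact sequence you quote over a field --- one reduces to residue fields and then applies a Nakayama argument over $W(R)$. None of this changes the overall strategy.
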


As we are interested in studying the moduli space $\calA_{g, J, N}$ of polarized chains of Abelian varieties, it makes sense to make the following definition.

\begin{definition}[\textit{cf.}  \Cref{def:polarized-chains}] \label{intro-def:chains-disps}
    A \emph{homogeneously polarized chain of displays of type $(g, J)$ over $R$} is a tuple
    \[
        \roundbr*{(M_i)_i, \left(\rho_{i, j}\right)_{i, j}, (\theta_i)_i, (M_{i, 1})_i, (\Psi_i)_i, I, \iota, (\lambda_i)_i}
    \]
    that is given as follows: 
    \begin{itemize}
        \item
        $((M_i, M_{i, 1}, \Psi_i)_i, (\rho_{i, j})_{i, j})$ is a diagram of shape $J$ in the category of displays of type $(2g, J)$ such that the homomorphism of $R$-modules
        $
            \rho_{i, j} \colon R \otimes_{W(R)} M_i \to R \otimes_{W(R)} M_j
            $
        is of constant rank $2g - (j - i)$ for all $i \leq j \leq i + 2g$.

        \item
        $\theta_i \colon (M_i, M_{i, 1}, \Psi_i) \to (M_{i + 2g}, M_{i + 2g, 1}, \Psi_{i + 2g})$ is an isomorphism such that we have the compatibilities $\theta_j \circ \rho_{i, j} = \rho_{i + 2g, j + 2g} \circ \theta_i$ and $\rho_{i, i + 2g} = p \theta_i$.

        \item
        $(I, \iota)$ is as in \Cref{intro-def:disps}.

        \item
        $\lambda_i \colon (M_i, M_{i, 1}, \Psi_i) \to (I, \iota) \otimes (M_{-i}, M_{-i, 1}, \Psi_{-i})^{\vee}$ is an antisymmetric isomorphism such that we have $\lambda_j \circ \rho_{i, j} = (\id_{(I, \iota)} \otimes \rho_{-j, -i}^{\vee}) \circ \lambda_i$.
    \end{itemize}
\end{definition}

We again also give an $(m, n)$-truncated version of this definition.
If $R$ is of characteristic $p$, then we allow $n$ to take the additional value $1 \blank \rdt$ that can be thought of as being slightly smaller than $1$; \enquote{$\rdt$} refers to the term \enquote{reductive quotient}.
Roughly, the case $n = 1 \blank \rdt$ corresponds to only having a divided Frobenius on the graded pieces of the $1$-truncated chain of modules; see \Cref{def:chains}.

We then show that the stack $\cathpolchdisps_{g, J}^{(m, n)}$ of $(m, n)$-truncated homogeneously polarized chains of displays over $\Spf(\Zp)$ admits a quotient stack description.

\begin{proposition}[\textit{cf.} \Cref{lem:polarized-disps-quotient-stack}] \label{intro-lem:hpolchainsdisps}
    There exists an equivalence
    \[
        \cathpolchdisps^{(m, n)}_{g, J} \lra \squarebr*{\roundbr*{L^{(m)} \calG}_{\Delta} \backslash \rmM^{\loc, (n)}},
    \]
    where we use the following notation: 
    \begin{itemize}
        \item
        $L^{(m)} \calG$ denotes the $m$-truncated Witt vector positive loop group of $\calG$; see \Cref{subsec:witt-vectors}. 

        \item
        $\rmM^{\loc}$ is the $p$-completion of the local model $\bbM^{\loc}$,  and $\rmM^{\loc, (n)} \to \rmM^{\loc}$ is a certain $L^{(m)} \calG$-equivariant $L^{(n)} \calG$-torsor;  see \Cref{def:m-loc-sp-+}. 

        \item
        The subscript $\Delta$ indicates that we take the quotient by the diagonal action.
    \end{itemize}
\end{proposition}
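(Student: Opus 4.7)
The strategy is to present both sides as classifying $(m, n)$-truncated homogeneously polarized chains of displays together with a trivialization of their underlying chain of truncated Witt modules, polarization, periodicity and $(I, \iota)$ data. From the lattice chain $(\Lambda_i)_{i \in J}$ and the symplectic form on $V$, over any $p$-nilpotent ring $R$, I obtain a canonical \enquote{standard} tuple
\[
    \bigl((W_m(R) \otimes_{\Zp} \Lambda_i)_i, (\rho_{i,j}^0)_{i,j}, (\theta_i^0)_i, (I^0, \iota^0), (\lambda_i^0)_i\bigr),
\]
and by definition of the parahoric $\calG$ and its truncated Witt vector positive loop group (see \Cref{subsec:witt-vectors}), the fppf sheaf of automorphisms of this standard datum is exactly $L^{(m)}\calG$.

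I then construct a tautological morphism $f \colon \rmM^{\loc, (n)} \to \cathpolchdisps^{(m, n)}_{g, J}$ as follows. A section $x \in \rmM^{\loc, (n)}(R)$ projects to a point $\bar{x} \in \rmM^{\loc}(R)$, which by the moduli description of the local model is an isotropic chain of rank-$g$ subspaces $C_i \subseteq \Lambda_i \otimes R$; these specify the Hodge filtrations $M_{i, 1}^0$ inside the standard chain of modules. By \Cref{def:m-loc-sp-+}, the lift of $\bar{x}$ to $\rmM^{\loc, (n)}$ records a compatible $n$-truncated divided Frobenius $(\Psi_i^0)_i$. A direct computation shows that $f$ is equivariant for the diagonal $(L^{(m)}\calG)_{\Delta}$-action: an automorphism $g$ of the standard datum sends a filtration to its $\calG$-translate through the quotient $L^{(m)}\calG \to \calG$, and transforms a divided Frobenius $\Psi$ into $g \circ \Psi \circ \widetilde{g}^{-1}$, where $\widetilde{g}$ is the automorphism induced on $\widetilde{M_1^0}$; the Frobenius twist implicit in $\widetilde{g}$ is precisely what the $L^{(n)}\calG$-torsor structure on $\rmM^{\loc, (n)} \to \rmM^{\loc}$ encodes, so that the combined action is genuinely diagonal.

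It remains to verify that $f$ induces an equivalence $\squarebr[\big]{(L^{(m)}\calG)_\Delta \backslash \rmM^{\loc, (n)}} \xrightarrow{\sim} \cathpolchdisps^{(m, n)}_{g, J}$. Uniqueness of a trivialization up to $L^{(m)}\calG$ is immediate from the identification of the automorphism sheaf above; the essential point is fppf-local existence of a trivialization of the underlying module-chain-plus-structure of an arbitrary $(m, n)$-truncated homogeneously polarized chain of displays. I expect this to be the main obstacle. I would handle it by first trivializing the underlying $\calG$-torsor over $R$, which is fppf-locally trivial because $\calG$ is smooth over $\Zp$, and then successively lifting this trivialization through the pro-unipotent kernel of $L^{(m)}\calG \to \calG$, whose graded pieces are vector-group-like objects arising from the filtration of truncated Witt vectors; the required smoothness properties are exactly what the formalism of \Cref{subsec:witt-vectors} provides. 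The mild additional case $n = 1 \, \rdt$ (when $R$ is of characteristic $p$) is handled by the same argument after restricting the Frobenius data to the graded pieces, as built into the definition of $\rmM^{\loc, (n)}$.
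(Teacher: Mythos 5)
Your overall route is the same as the paper's: the proof of \Cref{lem:polarized-disps-quotient-stack} (via \Cref{lem:disps-quotient-stack}) reformulates $\rmM^{\loc, \GSp, (n)}(R)$ as the set of pairs $((M_{i,1})_i, (\Psi_i)_i)$ making the standard homogeneously polarized chain $(\Lambda_{i, W_m(R)})_i$ into an $(m,n)$-truncated chain of displays, observes that the two copies of $L^{(m)}\GSp((\Lambda_i)_i)$ act by changing the two trivializations via $(k_1, k_2) . ((M_{i,1})_i, (\Psi_i)_i) = (k_2 \cdot (M_{i,1})_i, \, k_1 \cdot (\Psi_i)_i \cdot \widetilde{k_2}^{-1})$, and passes to the diagonal quotient. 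Your tautological morphism and your equivariance computation are exactly this.

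The one step where your argument does not go through as written is the fppf-local trivializability of the underlying homogeneously polarized chain, which you correctly single out as the main obstacle. You propose to \enquote{first trivialize the underlying $\calG$-torsor over $R$, which is fppf-locally trivial because $\calG$ is smooth}; but at that point you do not yet know that the chain \emph{is} a $\calG$-torsor. A chain is a priori just a diagram of finite projective $W_m(R)$-modules subject to rank, periodicity and polarization conditions, and the assertion that any two such are locally isomorphic (equivalently, that the reduction modulo the augmentation ideal is locally isomorphic to the standard chain $(\Lambda_{i,R})_i$) is a genuine structure theorem about polarized lattice chains, not a formal consequence of the smoothness of $\calG$. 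This is precisely \cite[Theorem 3.16]{rapoport-zink-96}, which the paper invokes in \Cref{lem:polarized-chains-torsors}, combined with \cite[Lemma 2.12]{bueltel-hedayatzadeh} to pass from $R$ to (truncated) Witt vectors. Your subsequent dévissage through the pro-unipotent kernel of $L^{(m)}\calG \to \calG$ is sound and is essentially the content of the latter reference, but the base case requires Rapoport--Zink's theorem rather than the smoothness of $\calG$; with that citation supplied, your proof closes and coincides with the paper's.
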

Thus our definition of $\cathpolchdisps^{(m, n)}_{g, J}$ gives (up to a slight difference in normalization) a deperfection of the stack of parahoric restricted local shtukas from \cite[Section 4]{shen-yu-zhang}; 
see \Cref{rmk:relation-shtukas-2}.
In particular, we obtain bijections
\[
    \cathpolchdisps_{g, J} \roundbr*{\Fpbar} \lra \breve{K}_{\sigma} \backslash X
    \quad \text{and} \quad
    \cathpolchdisps_{g, J}^{(m, 1 \blank \rdt)} \roundbr*{\Fpbar} \lra \breve{K}_{\sigma} \backslash \left(\breve{K}_1 \backslash X\right).
\]

Applying \Cref{intro-thm:disp-p-div} to the moduli description of $\calA_{g, J, N}$, we thus obtain a natural morphism
\[
    \Upsilon \colon \calA_{g, J, N}^{\wedge} \lra \cathpolchdisps_{g, J}
\]
that realizes the central leaves map;  see \Cref{def:upsilon}. Here $\calA_{g, J, N}^{\wedge}$ denotes the $p$-completion of $\calA_{g, J, N}$.
For any $m \geq 2$ the composition
\[
    \roundbr*{\calA_{g, J, N}}_{\Fp} \stackrel{\Upsilon}\lra \cathpolchdisps_{g, J, \Fp} \lra \cathpolchdisps_{g, J}^{(m, 1 \blank \rdt)}
\]
then realizes the map $\upsilon$ parametrizing the EKOR stratification.
Our main result is now the following.

\begin{theorem}[\textit{cf.}~\Cref{thm:main-res}] \label{intro-thm:main-res}
  For every tuple of integers
  $(m, n)$ with $n \neq 1 \blank \rdt$, the natural morphism $\calA_{g, J, N}^{\wedge} \to \cathpolchdisps_{g, J}^{(m, n)}$ is smooth.
    Similarly, for every $m \geq 2$  the morphism $(\calA_{g, J, N})_{\Fp} \to \cathpolchdisps_{g, J}^{(m, 1 \blank \rdt)}$ is smooth as well.
\end{theorem}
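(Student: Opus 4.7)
The plan is to use the quotient stack presentation from \Cref{intro-lem:hpolchainsdisps}. A morphism into $[(L^{(m)} \calG)_{\Delta} \backslash \rmM^{\loc, (n)}]$ is, by descent, determined by an $L^{(m)} \calG$-torsor equipped with an equivariant morphism to $\rmM^{\loc, (n)}$, and smoothness of the resulting morphism of stacks is equivalent to smoothness of the corresponding morphism of schemes. Concretely, I would consider the $L^{(m)} \calG$-torsor $P^{(m, n)} \to \calA_{g, J, N}^{\wedge}$ classifying, over an $R$-point $A$, a trivialization of the $(m, n)$-truncated polarized chain of displays $\Upsilon(A)$; it comes equipped with a tautological equivariant morphism $\widetilde{\Upsilon}^{(m, n)} \colon P^{(m, n)} \to \rmM^{\loc, (n)}$, and the problem becomes to show that this morphism is smooth.

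The composition of $\widetilde{\Upsilon}^{(m, n)}$ with the projection $\rmM^{\loc, (n)} \to \rmM^{\loc}$ factors through (the $p$-completion of) the Rapoport-Zink local model torsor over $\calA_{g, J, N}$ and is therefore smooth by the classical local model theorem. It thus suffices to verify smoothness of $\widetilde{\Upsilon}^{(m, n)}$ itself, which I would do via the infinitesimal lifting criterion: given a square-zero thickening $R' \twoheadrightarrow R$ of $p$-nilpotent rings with nilpotent kernel and compatible points of $P^{(m, n)}$ over $R$ and of $\rmM^{\loc, (n)}$ over $R'$, one must produce a lift. Smoothness of the classical local model map lifts the underlying Hodge-filtration torsor point to $R'$; combining this lift with the truncated divided Frobenius encoded by the $\rmM^{\loc, (n)}$-point produces a polarized chain of displays over $R'$ lifting the one over $R$. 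By \Cref{intro-thm:disp-p-div} combined with Grothendieck-Messing deformation theory for polarized chains of Abelian schemes, this in turn yields a lift of the polarized chain of Abelian varieties; the level structure extends uniquely, and altogether this gives the desired $R'$-point of $P^{(m, n)}$.

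The case $n = 1 \blank \rdt$ is the main technical obstacle and explains the restriction to $\Fp$ and $m \geq 2$. Here the divided Frobenius is only recorded on the graded pieces of the $1$-truncated Hodge filtration chain, so Grothendieck-Messing does not directly extract a lift of an Abelian chain from this data. My approach would be to factor the truncation map $\rmM^{\loc, (n')} \to \rmM^{\loc, (1 \blank \rdt)}$ (for a suitable $n' \neq 1 \blank \rdt$) as a quotient by the unipotent subgroup of $L^{(n')} \calG$ acting trivially on the reductive quotient at level $1 \blank \rdt$, so that smoothness for $n = 1 \blank \rdt$ reduces to the smoothness already established for $n'$ together with smoothness of the relevant group-scheme quotient. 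The hypothesis $m \geq 2$ enters because only then does one have enough Witt-length to make sense of the $\sigma$-twisted action of the reductive quotient. Tracking the precise group-scheme-theoretic structure of the truncation maps between these stacks and verifying the smoothness of the associated quotients is where I expect the bulk of the technical work to lie.
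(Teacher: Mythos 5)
Your reduction to the equivariant morphism $P^{(m,n)} \to \rmM^{\loc,(n)}$ is fine, and the infinitesimal lifting step that produces a polarized chain of displays over $R'$ (lift the Hodge filtration, then use formal smoothness of $\catpolchdisps_{g,J} \to \catpolchdisps_{g,J}^{(m,n)}$ to lift the divided Frobenius) matches the first half of the paper's argument. The gap is the step ``by \Cref{intro-thm:disp-p-div} combined with Grothendieck--Messing \dots this yields a lift of the polarized chain of Abelian varieties.'' \Cref{intro-thm:disp-p-div} is an equivalence only between \emph{formal} $p$-divisible groups and \emph{$F$-nilpotent} displays; away from that locus (e.g.\ on the ordinary Newton stratum, or wherever the Newton polygon has slope $0$ or $1$ parts) nothing in the paper lets you pass from a deformation of the display back to a deformation of the $p$-divisible group. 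Grothendieck--Messing does not rescue this: to apply it you would have to identify the lifted display with the value of the Dieudonné crystal on $R'$ equipped with a lift of the Hodge filtration, and that rigidity is exactly what the nilpotence hypothesis buys. As written, your argument establishes smoothness only at points of $\calA_{g,J,N}^{\wedge}$ whose $p$-divisible groups are formal. (A secondary worry: your claim that the composite $P^{(m,n)} \to \rmM^{\loc}$ is the classical local model morphism requires the compatibility of the display-theoretic and de Rham Hodge filtrations; this is precisely the kind of diagram whose commutativity the author identifies as the error in \cite[Proposition 7.2.4]{xiao-zhu}, so it cannot be waved through.)

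The paper closes the gap with a global geometric argument that your proposal is missing entirely. One observes that the smooth locus $U$ of $\upsilon^{(m,n)}$ is open; that $U = \pi^{-1}(V)$ for some $V \subseteq \abs{\catpolchpdiv_{g,J}}$, because $\pi \colon \calA_{g,J,N}^{\wedge} \to \catpolchpdiv_{g,J}$ is formally étale by Serre--Tate and the rest of the morphism is defined on $\catpolchpdiv_{g,J}$; and --- this is \Cref{lem:specializing} and \Cref{cor:specializing}, proved by modifying a chain of Abelian varieties over a valuation ring by a $p$-power isogeny so as to realize a prescribed point of $\catpolchpdiv_{g,J}$ at the generic point --- that every point of $\abs{\catpolchpdiv_{g,J}}$ has a preimage in $\abs{\calA_{g,J,N}^{\wedge}}$ specializing into the formal locus. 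Together these force $V = \abs{\catpolchpdiv_{g,J}}$. Without this specialization input (or a substitute such as Lau's smoothness of the truncated display functor, which the paper does not invoke) the proof is incomplete. Finally, the case $n = 1 \blank \rdt$ is not the main technical obstacle you make it out to be: since $\upsilon^{(m,1\blank\rdt)}$ factors through $\catpolchdisps_{g,J}^{(m,1)}$ and the truncation morphisms between the display stacks are smooth by \Cref{lem:polarized-disps-quotient-stack}, it follows in one line from the case $n = 1$.
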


\begin{proof}[Strategy of proof]
    By the Serre--Tate theorem, the smoothness of the morphism at a point of $\abs{\calA_{g, J, N}^{\wedge}}$ corresponding to a polarized chain of Abelian varieties only depends on the associated polarized chain of $p$-divisible groups.
    Using \Cref{intro-thm:disp-p-div} we then show that the morphism is smooth along the formal locus, \textit{i.e.}~the locus of chains of Abelian varieties with formal $p$-divisible groups.
    To obtain the smoothness in general, we then finally show that there are enough points in $\abs{\calA_{g, J, N}^{\wedge}}$ that specialize into the formal locus; see \Cref{cor:specializing} for a precise statement.
\end{proof}

As a natural next step, it would be interesting to generalize our results to the case of a more general Shimura variety at parahoric level instead of the Siegel modular variety.
It could be expected that there exists a stack of $(\calG, \mu)$-displays for every datum $(\calG, \mu)$ consisting of a parahoric $\Zp$-group scheme $\calG$ and a minuscule geometric conjugacy class $\mu$ of cocharacters of the generic fiber of $\calG$, as well as truncated versions that recover the definition from \cite{bueltel-pappas} in the hyperspecial case and give back our definition when the generic fiber of $\calG$ is either a general linear group or a group of symplectic similitudes.
In a situation where the local group datum $(\calG, \mu)$ comes from a Shimura datum, there then should be a natural smooth morphism from the $p$-completion of the corresponding Shimura variety into the stack of truncated $(\calG, \mu)$-displays.
As already mentioned above, partial results in this direction have been achieved by Pappas in \cite{pappas-parahoric-disps}.

\subsection{Acknowledgements}

I am very grateful to Ulrich G\"ortz for introducing me to the subject of Shimura varieties and their special fibers and for all the support I received from him throughout this project.
Furthermore, I would like to thank Jochen Heinloth, Ludvig Modin, Herman Rohrbach, Pol van Hoften and Torsten Wedhorn for helpful conversations.
Finally I also want to thank the anonymous referee for their valuable feedback, suggesting significant improvements to an earlier version of the manuscript.

    \section{Pairs and displays} \label{sec:pairs-disps}

In this section we  recall some of the theory of (not necessarily nilpotent) displays from \cite{zink-02}.
We also develop an analogous theory of $(m, n)$-truncated displays that is inspired by the definition of $(m, n)$-restricted local shtukas given in \cite{xiao-zhu}.

The letter $R$ denotes a $p$-nilpotent ring, and $(m, n)$ denotes a tuple of positive integers with $m \geq n + 1$.

\subsection{Witt vectors} \label{subsec:witt-vectors}

We use the following notation concerning Witt vectors: 

\begin{itemize}
    \item
    We write $W(R)$ for the ring of \emph{$(p$-typical\,$)$ Witt vectors of $R$} and $I_R \subseteq W(R)$ for its \emph{augmentation ideal}, \textit{i.e.}~the kernel of the projection $W(R) \to R$.

    Similarly, we write $W_n(R)$ for the ring of $n$-truncated Witt vectors of $R$ and $I_{n, R} \subseteq W_n(R)$ for its augmentation ideal.

    \item
    We denote the \emph{Witt vector Frobenius} on $W(R)$ by $\sigma \colon W(R) \to W(R)$.
    Given a $W(R)$-module $M$, we write $M^{\sigma} \coloneqq W(R) \otimes_{\sigma, W(R)} M$ for its Frobenius twist.

    Note that $\sigma$ induces a ring homomorphism $\sigma \colon W_m(R) \to W_n(R)$ on truncated Witt vectors (here it is crucial that $m \geq n + 1$, at least when $R$ is not of characteristic $p$).

    When $R$ is of characteristic $p$, we also write $\sigma \colon R \to R$ for the $p$-power Frobenius on $R$.

    \item
    We write
    \[
        W(R)^{\sigma = \id} \coloneqq \left\{x \in W(R)\mid\sigma(x) = x\right\} \subseteq W(R)
    \]
    for the subring of $\sigma$-invariant elements.
    Note that we have a natural isomorphism
    \[
        \Zp(R) \coloneqq \Cont \roundbr*{\abs{\Spec(R)}, \Zp} \lra W(R)^{\sigma = \id};
    \]
    when $R$ is of characteristic $p$, this follows from the chain of identifications
    \[
        \Zp(R) \cong W \roundbr*{\Fp(R)} \cong W \roundbr*{R^{\sigma = \id}} \cong W(R)^{\sigma = \id}, 
    \]
    and the general case follows because the reduction morphism $\Zp(R) \to \Zp(R/pR)$ is an isomorphism and the $\sigma$-stable ideal $\ker(W(R) \to W(R/pR)) \subseteq W(R)$ is killed by a power of $\sigma$, so  every $\sigma$-invariant element $x \in W(R/pR)$ lifts uniquely to a $\sigma$-invariant element $\widetilde{x} \in W(R)$.

    Similarly, we write
    \[
        W_m(R)^{\sigma = \id} \coloneqq \left\{x \in W_m(R)\mid\sigma(x) = x \in W_n(R)\right\} \subseteq W_m(R).
    \]
    Note that this is slightly ambiguous as the chosen $n$ does not appear in the notation; however, it should always be clear from the context what $n$ is used.

    \item
    Recall that the inverse of the \emph{Verschiebung} is a $\sigma$-linear map $I_R \to W(R)$.
    We denote its linearization by $\sigma^{\divd} \colon I_R^{\sigma} \to W(R)$ and call it the \emph{divided Frobenius}.
    For $x \in I_R$ we have $p \cdot \sigma^{\divd}(1 \otimes x) = \sigma(x)$, justifying the name.

    We also have a truncated variant of the divided Frobenius $\sigma^{\divd} \colon W_n(R) \otimes_{\sigma, W_m(R)} I_{m, R} \to W_n(R)$.
    
    \item
    Let $M$, $N$ be finite projective $W(R)$-modules, and let $f \colon M \to I_R N \subseteq N$ be a $W(R)$-linear map.
    We write $f^{\sigma, \divd}$ for the composition
    \[
        f^{\sigma, \divd} \colon M^{\sigma} \xrightarrow{f^{\sigma}} I_R^{\sigma} \otimes_{W(R)} N^{\sigma} \xrightarrow{\sigma^{\divd} \otimes \id} N^{\sigma}.
    \]
    Then we have $p \cdot f^{\sigma, \divd} = f^{\sigma}$, and given homomorphisms of finite projective $W(R)$-modules $g \colon L \to M$ and $h \colon N \to P$, we have $(f \circ g)^{\sigma, \divd} = f^{\sigma, \divd} \circ g^{\sigma}$ and $(h \circ f)^{\sigma, \divd} = h^{\sigma} \circ f^{\sigma, \divd}$.

    Similarly, given finite projective $W_m(R)$-modules $M$ and $N$ and a homomorphism of $W_m(R)$-modules $f \colon M \to I_{m, R} N \subseteq N$, we write $f^{\sigma, \divd}$ for the composition
    \begin{align*}
        f^{\sigma, \divd} \colon W_n(R) \otimes_{\sigma, W_m(R)} M &\xrightarrow{f^{\sigma}} \roundbr*{W_n(R) \otimes_{\sigma, W_m(R)} I_{m, R}} \otimes_{W_n(R)} \roundbr*{W_n(R) \otimes_{\sigma, W_m(R)} N} \\
        & \xrightarrow{\sigma^{\divd} \otimes \id} W_n(R) \otimes_{\sigma, W_m(R)} N.
    \end{align*}
    This construction has similar properties to the non-truncated version.

    \item
    Given a smooth affine $\Zp$-group scheme $G$, we write $L^+ G$ for the \emph{$($Witt vector$)$ positive loop group of $G$}, \textit{i.e.}~the flat affine $\Zp$-group scheme given by $(L^+ G)(R) = G(W(R))$ (see also \cite[Section 2.2]{bueltel-hedayatzadeh}).

    We also write $L^{(n)} G$ for the $n$-truncated positive loop group of $G$, \textit{i.e.}~the smooth affine $\Zp$-group scheme given by $(L^{(n)} G)(R) = G(W_n(R))$.
\end{itemize}

We also record the following technical lemma that will be used multiple times throughout the article.

\begin{lemma} \label{lem:chains-rank}
    Suppose that we are given an admissible linearly topologized ring $A$ $($see \cite[Tag 07E8]{stacks-project}$)$ such that $p$ is topologically nilpotent in $A$.
    Let $M$, $M'$ be finite projective $A$-modules of rank $h$, and let $f \colon M \to M'$ and $g \colon M' \to M$ be morphisms of $A$-modules such that $g \circ f = p \cdot \id_M$ and $f \circ g = p \cdot \id_{M'}$.
    Suppose furthermore that there exist integers $\ell$ and $\ell'$ with $\ell + \ell' = h$ such that for every continuous ring homomorphism $A \to k$ with $k$ an algebraically closed field, the induced homomorphisms of\, $k$-vector spaces
    \[
        f \colon k \otimes_A M \lra k \otimes_A M' \quad \text{and} \quad g \colon k \otimes_A M' \lra k \otimes_A M
    \]
    are of ranks $\ell$ and $\ell'$.

    Then the induced homomorphisms of $A/pA$-modules
    \[
        f \colon M/pM \lra M'/pM' \quad \text{and} \quad g \colon M'/pM' \lra M/pM
    \]
    are of constant ranks $\ell$ and $\ell'$, in the sense that their respective images are direct summands $($hence finite projective$)$ of the indicated rank.
\end{lemma}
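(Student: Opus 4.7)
The plan is to reduce the statement to a pointwise rank computation modulo $p$. The relations $g \circ f = p \cdot \id_M$ and $f \circ g = p \cdot \id_{M'}$ give $g \circ f \equiv 0 \equiv f \circ g$ modulo $p$, so at every point $x \in \Spec(A/pA)$ one has $\operatorname{im}(f_x) \subseteq \ker(g_x)$, which yields the a priori bound $\operatorname{rk}(f_x) + \operatorname{rk}(g_x) \leq h$.

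The heart of the proof is to upgrade this inequality to an equality at every $x$. The subsets
\[
    \curlybr{x \in \Spec(A/pA) : \operatorname{rk}(f_x) \geq \ell} \quad \text{and} \quad \curlybr{x : \operatorname{rk}(g_x) \geq \ell'}
\]
are open in $\Spec(A/pA)$ by lower semicontinuity of the rank (being the non-vanishing loci of the appropriate minors of $f$ and $g$), and by hypothesis both open sets contain every continuous geometric point. These continuous points correspond precisely to the closed subscheme $V(\bar{\mathfrak{a}}) \subseteq \Spec(A/pA)$, where $\bar{\mathfrak{a}}$ is the image in $A/pA$ of an ideal of definition $\mathfrak{a} \subseteq A$ (enlarged so that $p \in \mathfrak{a}$). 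Since elements of $\bar{\mathfrak{a}}$ are topologically nilpotent, $1 + \bar{\mathfrak{a}}$ consists of units, so $\bar{\mathfrak{a}}$ lies in the Jacobson radical of $A/pA$. Consequently any open subset of $\Spec(A/pA)$ containing $V(\bar{\mathfrak{a}})$ must coincide with $\Spec(A/pA)$ (if the complement were a non-empty $V(J)$, one would have $J + \bar{\mathfrak{a}} = A/pA$, giving a relation $1 = j + a$ with $a$ topologically nilpotent, which would force $j$ to be a unit — a contradiction). Combining with the opposite inequality from the first paragraph yields $\operatorname{rk}(f_x) = \ell$ and $\operatorname{rk}(g_x) = \ell'$ at every $x$.

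With pointwise constant rank in hand, $\operatorname{coker}(f \bmod p)$ is a finitely presented $A/pA$-module of constant fibre rank $\ell'$, hence finite projective of rank $\ell'$ by the standard criterion (cf.\ \cite[Tag 00NX]{stacks-project}). The short exact sequence $0 \to \operatorname{im}(f \bmod p) \to M'/pM' \to \operatorname{coker}(f \bmod p) \to 0$ then splits, exhibiting $\operatorname{im}(f \bmod p)$ as a direct summand of $M'/pM'$ of rank $\ell$, and the analogous argument handles $g$.

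The main obstacle I anticipate is the step that passes from control on the closed subscheme $V(\bar{\mathfrak{a}})$ of continuous geometric points to control on all of $\Spec(A/pA)$. This is precisely where the admissibility hypothesis enters in an essential way, through the topological nilpotence of elements of an ideal of definition and the consequent containment $\bar{\mathfrak{a}} \subseteq \operatorname{Jac}(A/pA)$.
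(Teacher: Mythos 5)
Your first two paragraphs are sound: identifying the continuous geometric points with $V(\bar{\mathfrak{a}})$, observing that $\bar{\mathfrak{a}}$ lies in the Jacobson radical of $A/pA$ because its elements are topologically nilpotent in a complete ring, and spreading the two open rank loci from $V(\bar{\mathfrak{a}})$ to all of $\Spec(A/pA)$ is a legitimate alternative to the paper's reduction (the paper instead notes that the constant-rank locus is a finitely presented locally closed subscheme of $\Spec(A/\mathfrak{a})$ with full underlying set, hence cut out by a nilpotent ideal, and then lifts direct sum decompositions to $A$ itself). The gap is in your last paragraph. A finitely presented module over $A/pA$ with constant fibre rank need \emph{not} be finite projective: over $R = k[\epsilon]/(\epsilon^2)$ the module $R/(\epsilon)$ is finitely presented of constant fibre rank $1$ but is not projective, and correspondingly the map $\epsilon \colon R \to R$ has fibre rank $0$ at every point without its image being a direct summand. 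The criterion in \cite[Tag 00NX]{stacks-project} requires in addition that all localizations be free (equivalently, flatness), and since $A/pA$ is in general non-reduced this is not automatic. So the conclusion does not follow from pointwise constancy of the rank alone.

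To close the gap you must feed the relations $g \circ f \equiv 0 \equiv f \circ g \pmod{p}$ back in at the end, not only at the beginning. Localizing at a prime $\mathfrak{p}$ of $A/pA$ and using that $f$ has residue rank $\ell$, one may choose bases in which $f = \begin{psmallmatrix} \id & 0 \\ 0 & B \end{psmallmatrix}$ with $B$ congruent to $0$ modulo the maximal ideal; then $g \circ f = 0$ and $f \circ g = 0$ force $g = \begin{psmallmatrix} 0 & 0 \\ 0 & g_{22} \end{psmallmatrix}$ with $B g_{22} = 0$, and since $g_{22}$ is an $\ell' \times \ell'$ block of residue rank $\ell'$ it is invertible, whence $B = 0$. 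Thus every localization of $\coker(f \bmod p)$ is free of rank $\ell'$, and only now does \cite[Tag 00NX]{stacks-project} apply to give projectivity of the cokernel and hence that the image is a direct summand. This repaired step is in substance the matrix manipulation that the paper carries out globally over $A$ using $g \circ f = p \cdot \id_M$ and $f \circ g = p \cdot \id_{M'}$; as written, your proof never uses these relations beyond the rank inequality, and that is exactly where it breaks.
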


\begin{proof}
    Let $\fraka \subseteq A$ be an ideal of definition that contains $p$.
    Note that the condition that $f$ and $g$ are of constant ranks $\ell$ and $\ell'$, is representable by a finitely presented locally closed subscheme $Z \subseteq \Spec(A/\fraka)$.
    By assumption we have $\abs{Z} = \abs{\Spec(A/\fraka)}$, so  $Z$ is the vanishing locus of a nilpotent ideal $\frakb \subseteq A/\fraka$.
    After replacing $\fraka$ by the preimage of $\frakb$ under $A \to A/\fraka$,  we thus may assume that the homomorphisms of $A/\fraka$-modules
    \[
        f \colon M/\fraka M \lra M'/\fraka M' \quad \text{and} \quad g \colon M'/\fraka M' \lra M/\fraka M
    \]
    are of constant ranks $\ell$ and $\ell'$.

    Modulo $\fraka$ the morphisms $f$ and $g$ now compose to $0$ in both directions and are of complementary constant ranks.
    Thus there exist direct sum decompositions
    \[
        M/\fraka M = P \oplus P' \quad \text{and} \quad M'/\fraka M' = P \oplus P'
    \]
    with respect to which we have $f = \begin{psmallmatrix} \id_P & 0 \\ 0 & 0 \end{psmallmatrix}$ and $g = \begin{psmallmatrix} 0 & 0 \\ 0 & \id_{P'} \end{psmallmatrix}$.
    Lift these decompositions to
    \[
        M = Q \oplus Q' \quad \text{and} \quad M' = Q \oplus Q'.
    \]
    After possibly modifying the lifts, we may then assume that
    \[
        f = \begin{pmatrix} \id_Q & 0 \\ 0 & \ast \end{pmatrix} \quad \text{and} \quad g = \begin{pmatrix} \ast & \ast \\ \ast & \id_{Q'} \end{pmatrix}.
    \]
    The assumptions $g \circ f = p \cdot \id_M$ and $f \circ g = p \cdot \id_{M'}$ then imply that we in fact have
    \[
        f = \begin{pmatrix} \id_Q & 0 \\ 0 & p \cdot \id_{Q'} \end{pmatrix} \quad \text{and} \quad g = \begin{pmatrix} p \cdot \id_Q & 0 \\ 0 & \id_{Q'} \end{pmatrix}, 
    \]
    so the claim follows.
\end{proof}

\subsection{Pairs and displays} \label{subsec:pairs-displays}

Let $h$ and $d$ denote integers with $0 \leq d \leq h$.

\begin{definition}
    A \emph{pair $($of type $(h, d))$ over $R$} is a tuple $(M, M_1)$ consisting of a finite projective $W(R)$-module $M$ (of rank $h$) and a $W(R)$-submodule $M_1 \subseteq M$ with $I_R M \subseteq M_1$ and such that $M_1/I_R M \subseteq M/I_R M$ is a direct summand (of rank $d$).

    An \emph{$m$-truncated pair over $R$} is a tuple $(M, M_1)$ consisting of a finite projective $W_m(R)$-module $M$ and a $W_m(R)$-submodule $M_1 \subseteq M$ with $I_{m, R} M \subseteq M_1$ and such that $M_1/I_{m, R} M \subseteq M/I_{m, R} M$ is a direct summand.
\end{definition}

\begin{definition}
    Let $(M, M_1)$ and $(M', M'_1)$ be two pairs over $R$.
    Then a \emph{morphism} $f \colon (M, M_1) \to (M', M'_1)$ is a morphism of $W(R)$-modules $f \colon M \to M'$ such that $f(M_1) \subseteq M'_1$.

    In the same way, we also define morphisms of $m$-truncated pairs.
\end{definition}

\begin{remark} \label{rmk:base-changing-pairs}
    Let $(M, M_1)$ be a pair over $R$, and let $R \to R'$ be a morphism of $p$-complete rings.
    Then we can form the base change $(M', M'_1) = (M, M_1)_{R'}$ that is a pair over $R'$.
    It is characterized by
    \[
        M' = W(R') \otimes_{W(R)} M \quad \text{and} \quad M'_1/I_{R'} M' = R' \otimes_R \left(M_1/I_R M\right).
    \]
    Similarly, we can also base change $m$-truncated pairs.

    From the descent result \cite[Corollary 34]{zink-02}, it follows that the assignment $R \mapsto \curlybr{\text{pairs over $R$}}$ defines a stack of $W(\calO_{\Spf(\Zp)})$-linear categories for the fpqc topology; here $W(\calO_{\Spf(\Zp)})$ denotes the sheaf of rings that is given by $R \mapsto W(R)$.

    Similarly, using \cite[Lemma 3.12]{lau-10}, we see that the assignment $R \mapsto \curlybr{\text{$m$-truncated pairs over $R$}}$ defines a stack of $W_m(\calO_{\Spf(\Zp)})$-linear categories.
\end{remark}

\begin{remark} \label{rmk:truncating-pairs}
    There are natural truncation functors
    \[
        \curlybr*{\text{pairs over $R$}} \lra \curlybr*{\text{$m$-truncated pairs over $R$}}
    \]
    and
    \[
        \curlybr*{\text{$m'$-truncated pairs over $R$}} \lra \curlybr*{\text{$m$-truncated pairs over $R$}}
    \]
    for $m \leq m'$.
\end{remark}

\begin{lemma}
    Let $(M, M_1)$ be a pair over $R$.
    Then $(M, M_1)$ has a \emph{normal decomposition} $(L, T)$, i.e.\ a direct sum decomposition $M = L \oplus T$ such that $M_1 = L \oplus I_R T$.
    Given a second pair $(M', M'_1)$ with normal decomposition $(L', T')$, every morphism of pairs $f \colon (M, M_1) \to (M', M'_1)$ can be written in matrix form $f = \begin{psmallmatrix} a & b \\ c & d \end{psmallmatrix}$ with
    \[
        a \colon L \lra L', \quad b \colon T \lra L', \quad c \colon L \lra I_R T', \quad d \colon T \lra T'.
    \]

    The same is true for $m$-truncated pairs.
\end{lemma}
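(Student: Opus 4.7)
The plan is to first establish the existence of a normal decomposition by lifting a decomposition modulo $I_R$, and then to read off the matrix constraint on morphisms as a formal consequence.

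For existence, I would pick a complementary direct summand $\overline{T}$ of $\overline{L} := M_1/I_R M$ inside the finite projective $R$-module $M/I_R M$, and then lift the resulting decomposition $M/I_R M = \overline{L} \oplus \overline{T}$ of $R$-modules to a decomposition $M = L \oplus T$ of $W(R)$-modules. This lifting is a standard statement for finite projective modules over $W(R)$, which rests on the fact that $I_R$ sits inside the Jacobson radical of $W(R)$ when $p$ is nilpotent in $R$; it belongs to the basic theory of displays in \cite{zink-02}. Once such a lift is in hand, the inclusion $L \subseteq M_1$ is automatic (any $\ell \in L$ reduces mod $I_R M$ into $\overline{L} = M_1/I_R M$, so $\ell \in M_1 + I_R M = M_1$), and the equality $M_1 = L \oplus I_R T$ follows by a short check: the inclusion $\supseteq$ is immediate, and for $\subseteq$, given $m \in M_1$ written as $m = \ell + t$ along $M = L \oplus T$, reducing mod $I_R M$ forces $\overline{t} \in \overline{L} \cap \overline{T} = 0$, so $t \in (I_R M) \cap T = I_R T$.

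For the matrix form, with normal decompositions chosen on both sides every $W(R)$-linear map $f \colon M \to M'$ is uniquely a matrix of homomorphisms $a, b, c, d$ as indicated. Using $M_1 = L \oplus I_R T$ and $M'_1 = L' \oplus I_R T'$, the condition $f(M_1) \subseteq M'_1$ reduces to the single requirement $f(L) \subseteq M'_1$, since $f(I_R T) = I_R f(T) \subseteq I_R M' \subseteq M'_1$ is automatic. Projecting $f(L) \subseteq L' \oplus I_R T'$ onto the $T'$-factor then gives $c(L) \subseteq I_R T'$, which is precisely the claim. The $m$-truncated version is formally identical after replacing $W(R)$ and $I_R$ by $W_m(R)$ and $I_{m, R}$; the only new point is the corresponding lifting statement, which follows by base change from the untruncated case or by the same argument directly. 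The main obstacle throughout is the lifting step; once it is granted, the rest of the argument is formal bookkeeping.
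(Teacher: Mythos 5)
Your proposal is correct and follows essentially the same route as the paper: choose a complement of $M_1/I_R M$ in $M/I_R M$, lift the decomposition to $M$, and read off the matrix condition from $f(M_1)\subseteq M'_1$ (the paper dismisses this last step as immediate, and your bookkeeping fills in exactly the right details). One small caveat: the lifting of the decomposition genuinely uses that $W(R)$ is Henselian along $I_R$ (lifting idempotent endomorphisms of a finite projective module), not merely that $I_R$ lies in the Jacobson radical, though you correctly attribute the statement to the standard theory in \cite{zink-02}.
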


\begin{proof}
    Set $L' \coloneqq M_1/I_R M \subseteq M/I_R M$.
    By definition it is a direct summand, so  we can choose a complement $T' \subseteq M/I_R M$.
    As $W(R)$ is Henselian along $I_R$,  we can now lift the decomposition $M/I_R M = L' \oplus T'$ to a decomposition $M = L \oplus T$ as desired.
    The claim about writing morphisms as matrices with respect to chosen normal decompositions is immediate.
\end{proof}

\begin{definition} \label{def:m-1-tilde}
    We define a natural $\sigma$-linear functor
    \[
    \begin{array}{r c l}
        \curlybr*{\text{pairs (of type $(h, d)$) over $R$}}
        & \lra
        & \curlybr*{\begin{gathered}\text{finite projective $W(R)$-modules} \\ \text{(of rank $h$)}\end{gathered}}, \\[5mm]
        (M, M_1)
        & \longmapsto
        & \widetilde{M_1}
    \end{array}
    \]
    as follows: 
    \begin{itemize}
        \item
        Given a pair $(M, M_1)$ over $R$ with normal decomposition $(L, T)$, we set $\widetilde{M_1} \coloneqq L^{\sigma} \oplus T^{\sigma}$.

        \item
        Given two pairs $(M, M_1)$ and $(M', M'_1)$ with normal decompositions $(L, T)$ and $(L', T')$ and a morphism $f = \begin{psmallmatrix} a & b \\ c & d \end{psmallmatrix} \colon (M, M_1) \to (M', M'_1)$, we define
        \[ 
            \widetilde{f} \coloneqq \begin{pmatrix} a^{\sigma} & p \cdot b^{\sigma} \\ c^{\sigma, \divd} & d^{\sigma} \end{pmatrix} \colon \widetilde{M_1} \lra \widetilde{M'_1}.
        \]
    \end{itemize}

    In the same way we also define a natural functor
    \[
    \begin{array}{r c l}
        \curlybr*{\text{$m$-truncated pairs over $R$}}
        & \lra
        & \curlybr*{\text{finite projective $W_n(R)$-modules}}, \\[1mm]
        (M, M_1)
        & \longmapsto
        & \widetilde{M_1}.
    \end{array}
    \]
\end{definition}

\begin{remark} \label{rmk:m-1-tilde}
    We make the following remarks.
    \begin{itemize}
        \item
        Checking that the functor $(M, M_1) \mapsto \widetilde{M_1}$ from \Cref{def:m-1-tilde} is well defined amounts to checking that the definition of $\widetilde{f}$ is compatible with identities and composition.
        This can be easily verified using the properties of $(\blank)^{\sigma, \divd}$; see \Cref{subsec:witt-vectors}.

        The definition $\widetilde{f} \coloneqq \begin{psmallmatrix} a^{\sigma} & p \cdot b^{\sigma} \\ c^{\sigma, \divd} & d^{\sigma} \end{psmallmatrix}$ imitates the expression $\begin{psmallmatrix} 1 & 0 \\ 0 & p \end{psmallmatrix}^{-1} \begin{psmallmatrix} a & b \\ c & d \end{psmallmatrix}^{\sigma} \begin{psmallmatrix} 1 & 0 \\ 0 & p \end{psmallmatrix}$; the latter \textit{a priori} only makes sense after base changing to $W(R)[1/p]$, but then the two terms agree.

        \item
        Let $(M, M_1)$ be a pair over $R$.
        Then we can informally think of $\widetilde{M_1}$ as the \enquote{correct version} of the Frobenius twist $M_1^{\sigma}$ that usually fails to be a finite projective $W(R)$-module.

        We have a natural surjective $W(R)$-linear map
        \[
            M_1^{\sigma} \cong L^{\sigma} \oplus \roundbr*{I_R^{\sigma} \otimes_{W(R)} T^{\sigma}} \xrightarrow{(\id, \sigma^{\divd} \otimes \id)} L^{\sigma} \oplus T^{\sigma} \cong \widetilde{M_1}, 
        \]
        and this map actually is an isomorphism when $R$ is a perfect ring of characteristic $p$.

        We also have natural $W(R)$-linear maps
        \[
            \widetilde{M_1} \cong L^{\sigma} \oplus T^{\sigma} \xrightarrow{\begin{psmallmatrix} 1 & 0 \\ 0 & p \end{psmallmatrix}} L^{\sigma} \oplus T^{\sigma} \cong M^{\sigma}
            \quad \text{and} \quad
            M^{\sigma} \cong L^{\sigma} \oplus T^{\sigma} \xrightarrow{\begin{psmallmatrix} p & 0 \\ 0 & 1 \end{psmallmatrix}} L^{\sigma} \oplus T^{\sigma} \cong \widetilde{M_1}
        \]
        that we can informally think of as the \enquote{inclusion} and the \enquote{multiplication-by-$p$ map}, respectively.
        Similar maps also exist in the truncated situation.

        \item
        The theory of \emph{higher displays} as developed in \cite{lau-higher-frames} and \cite{daniels-tannakian-displays} gives a way to conceptualize \Cref{def:m-1-tilde}.

        Let $W(R)^{\oplus}$ be the $\ZZ$-graded ring underlying the Witt frame (see \cite[Definition 2.2]{daniels-tannakian-displays}), and recall that it is equipped with two ring homomorphisms $\tau, \sigma \colon W(R)^{\oplus} \to W(R)$.
        Now giving a pair $(M, M_1)$ over $R$ is equivalent to giving a finite projective graded $W(R)^{\oplus}$-module whose type is concentrated in $[0, 1]$, and the functors $(M, M_1) \mapsto M$ and $(M, M_1) \mapsto \widetilde{M_1}$ correspond to base changing along $\tau$ and $\sigma$, respectively.
    \end{itemize}
\end{remark}

\begin{definition} \label{def:disps}
    A \emph{display over $R$} is a tuple $(M, M_1, \Psi)$, where $(M, M_1)$ is a pair over $R$ and $\Psi \colon \widetilde{M_1} \to M$ is an isomorphism of $W(R)$-modules.
    We call $\Psi$ the \emph{divided Frobenius} of the display.

    Similarly, an \emph{$(m, n)$-truncated display over $R$} is a tuple $(M, M_1, \Psi)$, where $(M, M_1)$ is an $m$-truncated pair over $R$ and $\Psi \colon \widetilde{M_1} \to W_n(R) \otimes_{W_m(R)} M$ is an isomorphism of $W_n(R)$-modules.
\end{definition}

\begin{remark}
     Similarly to what was explained for pairs in \Cref{rmk:base-changing-pairs,rmk:truncating-pairs}, one can also base change and truncate displays.

    The assignment $R \mapsto \curlybr{\text{displays over $R$}}$ defines an fpqc-stack of $\underline{\Zp}$-linear categories; here $\underline{\Zp}$ denotes the sheaf that is given by $R \mapsto \Zp(R)$.

    Similarly, the assignment $R \mapsto \curlybr{\text{$(m, n)$-truncated displays over $R$}}$ defines a stack of $W_m(\calO_{\Spf(\Zp)})^{\sigma = \id}$-linear categories.
\end{remark}

\begin{definition} \label{def:frobenius}
    Let $(M, M_1, \Psi)$ be a display over $R$.
    We define the \emph{Frobenius of $(M, M_1, \Psi)$} as the composition
    \[
        F_M \colon M^{\sigma} \lra \widetilde{M_1} \stackrel{\Psi}\lra M,
    \]
    where the first arrow is the \enquote{multiplication-by-$p$ map} introduced in \Cref{rmk:m-1-tilde}.
    Furthermore, we say that $(M, M_1, \Psi)$ is \emph{$F$-nilpotent} if there exists some $N \in \ZZ_{\geq 0}$ such that the $R/pR$-module homomorphism
    \[
        R/pR \otimes \roundbr*{F_M \circ \dotsb \circ F_M^{\sigma^{N - 1}}} \colon R/pR \otimes_{W(R)} M^{\sigma^N} \lra R/pR \otimes_{W(R)} M
    \]
    vanishes.

    Now let $(M, M_1, \Psi)$ be an $(m, n)$-truncated display over $R$.
    Then we again define the \emph{Frobenius of $(M, M_1, \Psi)$} as the composition
    \[
        F_M \colon W_n(R) \otimes_{\sigma, W_m(R)} M \lra \widetilde{M_1} \stackrel{\Psi}\lra W_n(R) \otimes_{W_m(R)} M
    \]
    and say that $(M, M_1, \psi)$ is \emph{$F$-nilpotent} if there exists some $N \in \ZZ_{\geq 0}$ such that the $R/pR$-module homomorphism
    \[
        \roundbr*{R/pR \otimes_{W_n(R)} F_M} \circ \dotsb \circ \roundbr*{R/pR \otimes_{W_n(R)} F_M}^{(p^{N - 1})} \colon \roundbr*{R/pR \otimes_{W_m(R)} M}^{(p^N)} \lra R/pR \otimes_{W_m(R)} M
    \]
    vanishes.
\end{definition}

\begin{remark}
    It follows immediately from the definition that an ($(m, n)$-truncated) display over $R$ is $F$-nilpotent if and only if its $(2, 1)$-truncated base change to $R/pR$ is $F$-nilpotent.
\end{remark}

\begin{remark}[Relation with Dieudonn\'e modules] \label{rmk:relation-dieudonne}
    Suppose that $R$ is perfect of characteristic $p$.
    Then there is the classical notion of a \emph{Dieudonn\'e module over $R$}.

    Such a Dieudonn\'e module over $R$ is a tuple $(M, F_M)$ consisting of a finite projective $W(R)$-module $M$ and an isomorphism $F_M \colon M^{\sigma}[1/p] \to M[1/p]$ that satisfies $p M \subseteq F_M(M^{\sigma}) \subseteq M$.

    We have a natural equivalence of categories
    \[
        \curlybr*{\text{displays over $R$}} \lra \curlybr*{\text{Dieudonn\'e modules over $R$}}
    \]
    that sends a display $(M, M_1, \Psi)$ to $(M, F_M)$, where $F_M$ is the Frobenius from \Cref{def:frobenius}.
    The inverse of this equivalence is given by sending $(M, F_M)$ to $(M, M_1, \Psi)$, where
    \[
        M_1 \coloneqq p \cdot F_M^{-1}(M)^{\sigma^{-1}} \subseteq M
        \quad \text{and} \quad
        \Psi \colon \widetilde{M_1} \cong p \cdot F_M^{-1}(M) \xrightarrow{p^{-1} \cdot F_M} M.
    \]
    Moreover, the display corresponding to a Dieudonn\'e module $(M, F_M)$ over $R$ is of type $(h, d)$ if and only if $M$ is a finite projective $W(R)$-module of rank $h$ and $M/F_M(M^{\sigma})$ is a finite projective $R$-module of rank $d$.
\end{remark}

\begin{remark}[Relation with truncated displays in the sense of Lau and Zink] \label{rmk:relation-lau-zink}
    Let us recall the notions of \emph{truncated pairs and displays} from \cite{lau-zink-18}; see also \cite{lau-10}.
    
    Consider the ring
    \[
        \calW_n(R) \coloneqq W_{n + 1}(R)/ \roundbr*{0, \dotsc, 0, R[p]},
    \]
    where $R[p] \subseteq R$ denotes the $p$-torsion in $R$.
    We have $(0, \dotsc, 0, R[p]) \cdot I_{n + 1, R} = 0$, so  $I_{n + 1, R}$ is naturally a $\calW_n(R)$-module, and we have a Frobenius $\sigma \colon \calW_n(R) \to W_n(R)$ as well as a divided Frobenius $\sigma^{\divd} \colon W_n(R) \otimes_{\sigma, \calW_n(R)} I_{n + 1, R} \to W_n(R)$.

    We now have the $\calW_n(R)$-linear category of \emph{$n$-truncated Lau--Zink-pairs} that has the following description.
    Every $n$-truncated Lau--Zink-pair (that we informally denote by $(M, M_1)$) has a normal decomposition $(L, T)$, where $L$ and $T$ are finite projective $\calW_n(R)$-modules, and given two such objects $(M, M_1)$ and $(M', M'_1)$ with normal decompositions $(L, T)$ and $(L', T')$, morphisms $f \colon (M, M_1) \to (M', M'_1)$ are given by matrices $f = \begin{psmallmatrix} a & b \\ c & d \end{psmallmatrix}$ with
    \[
        a \colon L \lra L', \quad
        b \colon T \lra L', \quad
        c \colon L \lra I_{n + 1, R} \otimes_{\calW_n(R)} T', \quad
        d \colon T \lra T'.
    \]

    There again is a $\sigma$-linear functor
    \[
        \curlybr*{\text{$n$-truncated Lau--Zink-pairs over $R$}} \lra \curlybr*{\text{finite projective $W_n(R)$-modules}}, \quad (M, M_1) \longmapsto \widetilde{M_1}
    \]
    that sends an $n$-truncated Lau--Zink-pair $(M, M_1)$ with normal decomposition $(L, T)$ to
    \[
        \widetilde{M_1} \coloneqq \roundbr*{W_n(R) \otimes_{\sigma, \calW_n(R)} L} \oplus \roundbr*{W_n(R) \otimes_{\sigma, \calW_n(R)} T}
    \]
    and a morphism $f = \begin{psmallmatrix} a & b \\ c & d \end{psmallmatrix}$ as above to $\widetilde{f} \coloneqq \begin{psmallmatrix} a^{\sigma} & p \cdot b^{\sigma} \\ c^{\sigma, \divd} & d^{\sigma} \end{psmallmatrix}$ analogously to \Cref{def:m-1-tilde}.
    An \emph{$n$-truncated Lau--Zink-display} is then defined to be a tuple $(M, M_1, \Psi)$, where $(M, M_1)$ is an $n$-truncated Lau--Zink-pair and $\Psi \colon \widetilde{M_1} \to W_n(R) \otimes_{\calW_n(R)} M$ is an isomorphism.

    From this description it is clear that there exist natural truncation functors
    \[
        \curlybr*{\text{$(m', n')$-truncated displays over $R$}} \lra \curlybr*{\text{$n$-truncated Lau--Zink-displays over $R$}}
    \]
    for $n \leq n'$ and
    \[
        \curlybr*{\text{$n'$-truncated Lau--Zink-displays over $R$}} \lra \curlybr*{\text{$(m, n)$-truncated displays over $R$}}
    \]
    for $m \leq n'$.

    Let us remark that for $R$ of characteristic $p$, we have an equivalence
    \[
        \curlybr*{\text{$1$-truncated Lau--Zink-displays over $R$}} \lra \curlybr*{\text{$F$-zips over $R$}},
    \]
    where the right-hand side denotes the  category of $F$-Zips from \cite[Definition 1.5]{moonen-wedhorn} with type concentrated in $[0, 1]$; see \cite[Example 3.6.4]{lau-higher-frames}.
\end{remark}

\subsection{Duals and twists} \label{subsec:duals-twists}

\begin{definition}
    Let $(M, M_1)$ be a pair over $R$.
    Then we define its \emph{dual}
    \[
        (M, M_1)^{\vee} \coloneqq \left(M^{\vee}, M_1^{\ast}\right)
    \]
    as follows: 
    \begin{itemize}
        \item
        $M^{\vee} = \Hom_{W(R)}(M, W(R))$ is the dual of the finite projective $W(R)$-module $M$.

        \item
        $M_1^{\ast} \subseteq M$ is the $W(R)$-submodule of all $\omega \colon M \to W(R)$ such that $\omega(M_1) \subseteq I_R$.
        Equivalently, it is the preimage under
        \[
            M^{\vee} \lra M^{\vee}/I_R M^{\vee} \cong \left(M/I_R M\right)^{\vee}
        \]
        of the orthogonal complement $(M_1/I_R M)^{\perp} \subseteq (M/I_R M)^{\vee}$.
    \end{itemize}
    This endows the category of pairs over $R$ with a $W(R)$-linear duality in the sense of \Cref{def:cat-duality} and \Cref{rmk:cat-lambda}.

    We similarly define duals of $m$-truncated pairs.
\end{definition}

\begin{remark}
    Note that if $(M, M_1)$ is an ($m$-truncated) pair of type $(h, d)$ over $R$,  then its dual $(M, M_1)^{\vee}$ is of type $(h, h - d)$.
\end{remark}

\begin{lemma} \label{lem:m-1-tilde-dual}
    The functor
    \[
        \curlybr*{\,\text{pairs over $R$}} \lra \curlybr*{\,\text{finite projective $W(R)$-modules}\,}, \quad (M, M_1) \longmapsto \widetilde{M_1}
    \]
    is naturally compatible with dualities.

    The same is true for the functor
    \[
        \curlybr*{\text{$m$-truncated pairs over $R$}} \lra \curlybr*{\,\text{finite projective $W_n(R)$-modules}\,}, \quad (M, M_1) \longmapsto \widetilde{M_1}.
    \]
\end{lemma}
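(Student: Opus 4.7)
The plan is to exploit the fact that normal decompositions behave nicely under dualization. If $M = L \oplus T$ realizes a normal decomposition of $(M, M_1)$ (so that $M_1 = L \oplus I_R T$), then a direct computation with the description $M_1^{\ast} = \set{\omega \in M^{\vee}}{\omega(M_1) \subseteq I_R}$ shows $M_1^{\ast} = I_R L^{\vee} \oplus T^{\vee}$. Hence the reordered decomposition $(T^{\vee}, L^{\vee})$ of $M^{\vee} = T^{\vee} \oplus L^{\vee}$ is a normal decomposition of $(M, M_1)^{\vee}$. Using this together with the canonical isomorphism $(P^{\sigma})^{\vee} \cong (P^{\vee})^{\sigma}$ for finite projective $P$, we obtain a comparison isomorphism
\[
    \alpha_{(M, M_1)} \colon \widetilde{M_1^{\ast}} = (T^{\vee})^{\sigma} \oplus (L^{\vee})^{\sigma} \xrightarrow{\mathrm{swap}} (L^{\vee})^{\sigma} \oplus (T^{\vee})^{\sigma} \cong \widetilde{M_1}^{\vee}.
\]

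The substantive content is to verify (a) that $\alpha_{(M, M_1)}$ is independent of the chosen normal decomposition and (b) that the collection $\curlybr{\alpha_{(M, M_1)}}$ is natural in $(M, M_1)$ and respects the further coherences required by \Cref{def:cat-duality} (essentially, compatibility with the double-dual identification). I would first dispatch (b) by a direct matrix calculation. Given $f = \begin{psmallmatrix} a & b \\ c & d \end{psmallmatrix} \colon (M, M_1) \to (M', M'_1)$ with $c \colon L \to I_R T'$, in the dual normal decompositions the dual morphism takes the form $f^{\vee} = \begin{psmallmatrix} d^{\vee} & b^{\vee} \\ c^{\vee} & a^{\vee} \end{psmallmatrix} \colon (M', M'_1)^{\vee} \to (M, M_1)^{\vee}$, where $c^{\vee} \colon (T')^{\vee} \to I_R L^{\vee}$ is well-defined via the identification $\Hom(L, I_R T') \cong I_R \otimes \Hom(L, T')$. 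Applying the definition of $(\blank)^{\sim}$ to both $\widetilde{f^{\vee}}$ and $(\widetilde{f})^{\vee}$, the two $4$-term matrices agree under the swap $\alpha$ once we know $(c^{\sigma, \divd})^{\vee} = (c^{\vee})^{\sigma, \divd}$; the latter is immediate by writing $c = \sum_i \alpha_i \otimes c'_i$ with $\alpha_i \in I_R$, $c'_i \colon L \to T'$, dualizing termwise, and invoking linearity of $\sigma^{\divd}$. Then (a) follows for free: any two normal decompositions differ by a pair automorphism of $(M, M_1)$, and the naturality established in (b) forces $\alpha_{(M, M_1)}$ to be unchanged.

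The truncated case is formally identical, with $W(R)$ replaced by $W_m(R)$, $I_R$ by $I_{m,R}$, and the divided Frobenius by its truncated variant recorded in \Cref{subsec:witt-vectors}. All the compatibilities used---the splitting $\Hom(L, I_{m,R} T') \cong I_{m,R} \otimes \Hom(L, T')$ for finite projective $L$, and the identity $(c^{\sigma, \divd})^{\vee} = (c^{\vee})^{\sigma, \divd}$---carry over verbatim.

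The main technical point is the divided-Frobenius identity $(c^{\sigma, \divd})^{\vee} = (c^{\vee})^{\sigma, \divd}$ appearing in step (b); everything else in the argument is bookkeeping with normal decompositions. As a cleaner alternative to the normal decomposition calculation, one could invoke the higher-display perspective mentioned in \Cref{rmk:m-1-tilde}: the functor $(M, M_1) \mapsto \widetilde{M_1}$ is base change along $\sigma \colon W(R)^{\oplus} \to W(R)$ of graded $W(R)^{\oplus}$-modules concentrated in degrees $[0,1]$, and since $\sigma$ is a ring homomorphism, the duality of graded $W(R)^{\oplus}$-modules is preserved, yielding the claim formally. I would however present the explicit argument above, as it is self-contained and gives a concrete description of the comparison isomorphism that can be reused when constructing duals of displays.
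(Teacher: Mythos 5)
Your proposal is correct and takes essentially the same route as the paper: choose a normal decomposition $(L,T)$, observe that $(T^{\vee}, L^{\vee})$ is a normal decomposition of the dual pair, and define the comparison isomorphism $\widetilde{M_1^{\ast}} \to \widetilde{M_1}^{\vee}$ as the swap of the two summands. The only difference is that you spell out the naturality and independence-of-decomposition checks (via the identity $(c^{\sigma,\divd})^{\vee} = (c^{\vee})^{\sigma,\divd}$) that the paper leaves implicit.
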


\begin{proof}
    Given a pair $(M, M_1)$ over $R$, we need to define a natural isomorphism $\widetilde{M_1^{\ast}} \to \widetilde{M_1}^{\vee}$.
    Let $(L, T)$ be a normal decomposition of $(M, M_1)$.
    Then $(T^{\vee}, L^{\vee})$ is a normal decomposition of $(M, M_1)^{\vee}$, and we can define the desired isomorphism as
    \[
        \begin{pmatrix} 0 & \id_{L^{\sigma, \vee}} \\ \id_{T^{\sigma, \vee}} & 0 \end{pmatrix} \colon \widetilde{M_1^{\ast}} \cong T^{\vee, \sigma} \oplus L^{\vee, \sigma} \lra \roundbr*{L^{\sigma} \oplus T^{\sigma}}^{\vee} \cong \widetilde{M_1}^{\vee}. \qedhere
    \]
\end{proof}

\begin{definition}
    Let $(M, M_1, \Psi)$ be a display over $R$.
    Then we define its \emph{dual}
    \[
        (M, M_1, \Psi)^{\vee} \coloneqq \left(M^{\vee}, M_1^{\ast}, \Psi^{\vee, -1}\right),
    \]
    where we implicitly use \Cref{lem:m-1-tilde-dual} to make sense of $\Psi^{\vee, -1}$ as an isomorphism $\widetilde{M_1^{\ast}} \to M^{\vee}$.
    This endows the category of displays over $R$ with a $\Zp(R)$-linear duality.

    We similarly define duals of $(m, n)$-truncated displays.
\end{definition}

\begin{definition}
    Let $(M, M_1)$ be a pair over $R$, and let $I$ be an finite projective $W(R)$-module.
    Then we define the \emph{twist}
    \[
        I \otimes (M, M_1) \coloneqq \left(I \otimes_{W(R)} M, I \otimes_{W(R)} M_1\right).
    \]
    This defines an action of the symmetric monoidal category of finite projective $W(R)$-modules on the category of pairs over $R$ in the sense of \Cref{def:cat-action}.

    We similarly define twists of $m$-truncated pairs over $R$ by finite projective $W_m(R)$-modules.
\end{definition}

\begin{remark}
    In fact the symmetric monoidal category of finite projective $W(R)$-modules is rigid and $W(R)$-linear, and the action defined above is naturally compatible with dualities and the $W(R)$-linear structure; see \Cref{rmk:cat-action-duality,rmk:cat-rigid-duality,rmk:cat-lambda}.

    In the same way,  the action in the truncated setting also is naturally compatible with dualities and the $W_m(R)$-linear structure.
\end{remark}

\begin{lemma} \label{lem:m-1-tilde-twist}
    The functor
    \[
        \curlybr*{\,\text{pairs over $R$}} \lra \curlybr*{\,\text{finite projective $W(R)$-modules}\,}, \quad (M, M_1) \longmapsto \widetilde{M_1}
    \]
    is naturally equivariant with respect to the symmetric monoidal functor
    \[
        \curlybr*{\,\text{finite projective $W(R)$-modules}\,} \lra \curlybr*{\,\text{finite projective $W(R)$-modules}\,}, \quad I \longmapsto I^{\sigma}
    \]
    in the sense of \Cref{def:cat-action} and \Cref{rmk:cat-action-duality}.

    Similarly, the functor
    \[
        \curlybr*{\text{$m$-truncated pairs over $R$}} \lra \curlybr*{\,\text{finite projective $W_n(R)$-modules}\,}, \quad (M, M_1) \longmapsto \widetilde{M_1}
    \]
    is naturally equivariant with respect to the symmetric monoidal functor
    \[
        \curlybr*{\,\text{finite projective $W_m(R)$-modules}\,} \lra \curlybr*{\,\text{finite projective $W_n(R)$-modules}\,}, \quad I \longmapsto W_n(R) \otimes_{\sigma, W_m(R)} I.
    \]
\end{lemma}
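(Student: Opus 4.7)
The plan is to construct the equivariance isomorphism explicitly by choosing normal decompositions compatibly on both sides, and then to reduce all compatibility checks to the formal properties of $(\blank)^{\sigma,\divd}$ recorded in \Cref{subsec:witt-vectors}.

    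First I would observe that if $(L,T)$ is a normal decomposition of $(M,M_1)$ and $I$ is a finite projective $W(R)$-module, then $(I\otimes_{W(R)} L,\,I\otimes_{W(R)} T)$ is a normal decomposition of $I\otimes (M,M_1)$; this uses flatness of $I$ to identify $I\otimes_{W(R)} I_R T$ with $I_R\cdot(I\otimes_{W(R)} T)$ inside $I\otimes_{W(R)} T$. Applied to these compatible decompositions, the definition of $\widetilde{(\blank)}$ gives
    \[
        \widetilde{I\otimes M_1}=(I\otimes L)^{\sigma}\oplus (I\otimes T)^{\sigma},
    \]
    and combining with the canonical symmetric monoidal isomorphism $(A\otimes_{W(R)} B)^{\sigma}\cong A^{\sigma}\otimes_{W(R)} B^{\sigma}$ yields a candidate isomorphism
    \[
        \alpha_{I,(M,M_1)}\colon \widetilde{I\otimes M_1}\xrightarrow{\sim} I^{\sigma}\otimes_{W(R)}\widetilde{M_1}.
    \]

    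Next I would verify naturality in both arguments. For a morphism of pairs $f=\begin{psmallmatrix}a&b\\ c&d\end{psmallmatrix}\colon (M,M_1)\to(M',M'_1)$ and a morphism $g\colon I\to I'$, the matrix of $\widetilde{g\otimes f}$ with respect to the tensor normal decompositions has entries $(g\otimes a)^{\sigma}$, $p\cdot(g\otimes b)^{\sigma}$, $(g\otimes c)^{\sigma,\divd}$ and $(g\otimes d)^{\sigma}$. All but the third are plainly compatible with $g^{\sigma}\otimes\widetilde{f}$, and the third reduces to the identity $(g\otimes c)^{\sigma,\divd}=g^{\sigma}\otimes c^{\sigma,\divd}$, which is immediate from the definition of $(\blank)^{\sigma,\divd}$ and the $W(R)$-linearity of $\sigma^{\divd}\colon I_R^{\sigma}\to W(R)$. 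Independence of the chosen normal decompositions is then automatic by applying naturality to an isomorphism of pairs relating any two choices.

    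Finally I would check compatibility of $\alpha$ with the symmetric monoidal structure and with the dualities of \Cref{lem:m-1-tilde-dual}; both reduce to the corresponding facts for the symmetric monoidal functor $(\blank)^{\sigma}$ on finite projective $W(R)$-modules and, for the duality, to the elementary fact that the swap matrix used in the proof of \Cref{lem:m-1-tilde-dual} commutes with $g^{\sigma}$-twist. The truncated statement goes through verbatim after replacing $W(R)$, $I_R$ and $(\blank)^{\sigma}$ by $W_m(R)$, $I_{m,R}$ and $W_n(R)\otimes_{\sigma,W_m(R)}(\blank)$, respectively.

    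The main obstacle is really just bookkeeping rather than any deep point: $\widetilde{(\blank)}$ is defined via a matrix recipe in normal decompositions, so every natural transformation and coherence check must be traced through that recipe, taking care that the asymmetric appearances of $p$ and $\sigma^{\divd}$ interact correctly with tensoring by $I$. Conceptually, as sketched in the last part of \Cref{rmk:m-1-tilde}, $\widetilde{(\blank)}$ is nothing but base change along the ring map $\sigma\colon W(R)^{\oplus}\to W(R)$, so the equivariance is formal from that viewpoint; since the framework of higher displays is not set up here, however, the explicit normal-decomposition argument above appears to be the most direct route.
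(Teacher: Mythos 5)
Your proposal is correct and takes essentially the same route as the paper: choose a normal decomposition $(L,T)$ of $(M,M_1)$, note that $(I\otimes_{W(R)}L,\,I\otimes_{W(R)}T)$ is a normal decomposition of the twist, and define the equivariance isomorphism via the canonical identification $(I\otimes_{W(R)}L)^{\sigma}\oplus(I\otimes_{W(R)}T)^{\sigma}\cong I^{\sigma}\otimes_{W(R)}(L^{\sigma}\oplus T^{\sigma})$. The paper leaves the naturality and coherence checks implicit, whereas you spell them out (correctly) via the properties of $(\blank)^{\sigma,\divd}$; this is only a difference in level of detail, not of method.
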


\begin{proof}
    Given a pair $(M, M_1)$ over $R$ and a finite projective $W(R)$-module $I$, we need to define a natural isomorphism $(I \otimes_{W(R)} M_1)^{\sim} \to I^{\sigma} \otimes_{W(R)} \widetilde{M_1}$.
    Let $(L, T)$ be a normal decomposition of $(M, M_1)$.
    Then $(I \otimes_{W(R)} L, I \otimes_{W(R)} T)$ is a normal decomposition of $I \otimes (M, M_1)$, and we can define the desired isomorphism as
    \[
        \left(I \otimes_{W(R)} M_1\right)^{\sim} \cong \left(I \otimes_{W(R)} L\right)^{\sigma} \oplus \left(I \otimes_{W(R)} T\right)^{\sigma} \lra I^{\sigma} \otimes_{W(R)} \left(L^{\sigma} \oplus T^{\sigma}\right) \cong I^{\sigma} \otimes_{W(R)} \widetilde{M_1}. \qedhere
    \]
\end{proof}

\begin{definition}
    Let $(M, M_1, \Psi)$ be a display over $R$, and let $(I, \iota)$ be a tuple consisting of a finite projective $W(R)$-module $I$ and an isomorphism $\iota \colon I^{\sigma} \to I$.
    Then we define the \emph{twist}
    \[
        (I, \iota) \otimes (M, M_1, \Psi) \coloneqq \left(I \otimes_{W(R)} M, I \otimes_{W(R)} M_1, \iota \otimes \Psi\right),
    \]
    where we implicitly use \Cref{lem:m-1-tilde-twist} to make sense of $\iota \otimes \Psi$ as an isomorphism $(I \otimes_{W(R)} M_1)^{\sim} \to M$.
    This defines an action of the symmetric monoidal category of tuples $(I, \iota)$ as above on the category of displays over $R$, and in fact this action is naturally compatible with dualities and the $\Zp(R)$-linear structure.

    We similarly define twists of $(m, n)$-truncated displays over $R$ by tuples $(I, \iota)$ consisting of a finite projective $W_m(R)$-module $I$ and an isomorphism $\iota \colon W_n(R) \otimes_{\sigma, W_m(R)} I \to W_n(R) \otimes_{W_m(R)} I$.
\end{definition}

\begin{remark}
    We can conceptually understand duals and twists of pairs and displays in terms of higher displays.

    Given a pair $(M, M_1)$ there is a corresponding finite projective graded module over $W(R)^{\oplus}$ with type concentrated in $[0, 1]$ (see \Cref{rmk:m-1-tilde}).
    The dual pair $(M, M_1)^{\vee}$ then corresponds to taking the dual of that module and shifting it by $1$, and the twist $I \otimes (M, M_1)$ corresponds to viewing $I$ as a finite projective graded $W(R)^{\oplus}$-module with type concentrated in $0$ and forming the tensor product over $W(R)^{\oplus}$.
    The various categorical properties of taking duals and twists then all follow from the fact that finite projective graded modules over a graded ring form a rigid symmetric monoidal category.
\end{remark}

\subsection{The display of a $\boldsymbol{p}$-divisible group} \label{subsec:disp-p-div}

We have the following result of Lau, relating $p$-divisible groups and displays.

\begin{theorem}[\textit{cf.} \cite{lau-10}] \label{thm:disp-p-div}
    There is a natural $\Zp(R)$-linear exact functor
    \[
        \DD \colon \curlybr*{\text{$p$-divisible groups over $R$}}^{\op} \lra \curlybr*{\text{displays over $R$}}
    \]
    that is compatible with dualities and satisfies the following properties: 
    \begin{itemize}
        \item
        For a $p$-divisible group $X$ over $R$ of height $h$ and dimension $d$,  the display $\DD(X)$ is of type $(h, d)$.

        \item
        Suppose that $R$ is a perfect ring of characteristic $p$.
        Then $\DD$ coincides with classical $($contravariant\,$)$ Dieudonn\'e theory.

        \item
        $\DD$ restricts to an equivalence
        \[
            \DD \colon \curlybr*{\,\text{formal $p$-divisible groups over $R$}}^{\op} \lra \curlybr*{\text{$F$-nilpotent displays over $R$}}.
        \]

        \item
        Let $X$, $X'$ be $p$-divisible groups over $R$ of height $h$, and let $f \colon X' \to X$ be an isogeny of height $r \in \ZZ_{\geq 0}$.
        Assume that there exists an isogeny $g \colon X \to X'$ such that $g \circ f = p \cdot \id_{X'}$ and $f \circ g = p \cdot \id_X$ $($note that this forces $r \leq h)$.
        Then the homomorphism of\, $W(R)/pW(R)$-modules
        \[
            \DD(f) \colon \DD(X)/p\DD(X) \lra \DD(X')/p\DD(X')
        \]
        is of constant rank $h - r$.
    \end{itemize}
\end{theorem}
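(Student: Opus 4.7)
The construction of the contravariant functor $\DD$, together with its $\Zp(R)$-linearity, exactness, compatibility with dualities, behavior on type $(h, d)$, agreement with classical contravariant Dieudonné theory over perfect rings of characteristic $p$, and the restriction to an equivalence between formal $p$-divisible groups and $F$-nilpotent displays, is the main content of Lau \cite{lau-10} (building on \cite{zink-02}); I would simply invoke this. The only part that requires an independent argument is the last bullet concerning the rank of $\DD(f)$ modulo $p$, and the plan is to reduce this to the technical \Cref{lem:chains-rank} via classical Dieudonné theory at geometric points.

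Concretely, applying $\DD$ contravariantly to the isogenies $f \colon X' \to X$ and $g \colon X \to X'$ yields $W(R)$-linear maps
\[
    \DD(f) \colon \DD(X) \to \DD(X'), \qquad \DD(g) \colon \DD(X') \to \DD(X)
\]
between finite projective $W(R)$-modules of rank $h$, with $\DD(f) \circ \DD(g) = p \cdot \id$ and $\DD(g) \circ \DD(f) = p \cdot \id$ by functoriality. To put ourselves in the setting of \Cref{lem:chains-rank}, I would take $A = W(R)$ equipped with its natural admissible linear topology (the $I_R$-adic topology, into which $p$ falls because $p = V(1) \in I_R$, so $p$ is topologically nilpotent).

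The remaining point is to verify the pointwise rank hypothesis. Any continuous ring homomorphism $W(R) \to k$ with $k$ an algebraically closed field factors, since $p$ and hence $I_R$ must go to $0$, as $W(R) \to R \to W(k) \to k$ through some $R$-algebra structure on $k$ (which is necessarily of characteristic $p$ and perfect). Base changing through $W(R) \to W(k)$ and using the comparison with classical Dieudonné theory in the perfect case, the induced maps are $\DD(f_k)$ and $\DD(g_k)$ on the Dieudonné modules of the height-$r$ isogeny $f_k$ and its complementary isogeny $g_k$ over $k$. By the elementary divisor theorem over $W(k)$, the hypotheses $\DD(g_k) \circ \DD(f_k) = p$ and $\DD(f_k) \circ \DD(g_k) = p$ together with $\mathrm{length}_{W(k)} \mathrm{coker}(\DD(f_k)) = r$ force the elementary divisors of $\DD(f_k)$ to be $1$ with multiplicity $h - r$ and $p$ with multiplicity $r$. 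Reducing modulo $p$ yields $k$-ranks equal to $h - r$ for $\DD(f_k)$ and $r$ for $\DD(g_k)$, independently of the point.

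With the pointwise ranks $\ell = h - r$ and $\ell' = r$ verified and satisfying $\ell + \ell' = h$, \Cref{lem:chains-rank} applies and gives that $\DD(f) \colon \DD(X)/p\DD(X) \to \DD(X')/p\DD(X')$ and $\DD(g) \colon \DD(X')/p\DD(X') \to \DD(X)/p\DD(X)$ are both of constant rank, of rank $h - r$ and $r$ respectively, completing the proof. The main (minor) obstacle I anticipate is checking that Lau's functor $\DD$ does commute with base change along $R \to k$ in the requisite sense, so that the pointwise comparison with classical Dieudonné theory is actually applicable; this is part of the package in \cite{lau-10} but needs to be cited carefully.
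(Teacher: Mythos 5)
Your proposal is correct and follows essentially the same route as the paper: the paper likewise just cites \cite[Proposition 4.1 and Theorem 5.1]{lau-10} for everything except the last bullet, and proves that bullet by applying \Cref{lem:chains-rank} (with $A = W(R)$ and its natural topology) to reduce to an algebraically closed field of characteristic $p$, where the rank statement is classical Dieudonné theory. Your write-up merely supplies more detail (the factorization of continuous maps $W(R) \to k$ through $R$ and the elementary-divisor computation) than the paper bothers to record.
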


\begin{proof}
    See \cite[Proposition 4.1 and Theorem 5.1]{lau-10}.
    For the last claim we can use \Cref{lem:chains-rank} to reduce to the situation where $R = k$ is an algebraically closed field of characteristic $p$, where the statement is a standard result from classical Dieudonn\'e theory.
\end{proof}     \section{(Polarized) chains of pairs and displays} \label{sec:chains-pairs-disps}

As in \Cref{sec:pairs-disps}, $R$ denotes a $p$-nilpotent ring and $(m, n)$ denotes a tuple of positive integers with $m \geq n + 1$.
When $R$ is of characteristic $p$,  we allow $n$ to take the additional value $1 \blank \rdt$ (where \enquote{$\rdt$} refers to the term \enquote{reductive quotient}) that we think of as being slightly smaller than $1$.
In the case $n = 1 \blank \rdt$, we require $m \geq 2$.

\subsection{Chains of pairs and displays} \label{subsec:chains-pairs-displays}

Fix a positive integer $h$, a second integer $0 \leq d \leq h$ and a non-empty subset $J \subseteq \ZZ$ such that $J + h \ZZ = J$.
Let $\calE$ be the set of \enquote{edges of $J$}, \textit{i.e.}~the set of tuples $(i, j)$ consisting of consecutive elements in $J$.
For $e = (i, j) \in \calE$ we write $\abs{e} \coloneqq j - i$ for the \enquote{length of $e$}.

\begin{definition} \label{def:chains}
    A \emph{chain $($of type $(h, J))$ over $R$} is a tuple
    \[
        \roundbr*{(M_i)_{i \in J}, \left(\rho_{i, j}\right)_{i, j \in J, i \leq j}, (\theta_i)_{i \in J}}
    \]
    that is given as follows: 
    \begin{itemize}
        \item
        $((M_i)_i, (\rho_{i, j})_{i, j})$ is a diagram of finite projective $W(R)$-modules of rank $h$ of shape $J$ such that the homomorphism of $R/pR$-modules
        \[
            \rho_{i, j} \colon R/pR \otimes_{W(R)} M_i \lra R/pR \otimes_{W(R)} M_j
        \]
        is of constant rank $h - (j - i)$ for all $i \leq j \leq i + h$.

        \item
        The $\theta_i \colon M_i \to M_{i + h}$ are isomorphisms such that we have $\theta_j \circ \rho_{i, j} = \rho_{i + h, j + h} \circ \theta_i$ and $\rho_{i, i + h} = p \cdot \theta_i$.
    \end{itemize}

    For $n \neq 1 \blank \rdt$ similarly define the notion of an \emph{$n$-truncated chain over $R$} by replacing $W(R)$ with $W_n(R)$ in the above.

    Now suppose that $R$ is of characteristic $p$.
    Then a \emph{$(1 \blank \rdt)$-truncated chain over $R$} is a tuple $((N_e)_{e \in \calE}, (\theta_e)_{e \in \calE})$ that is given as follows: 
    \begin{itemize}
        \item
        $N_e$ is a finite projective $R$-module of rank $\abs{e}$.

        \item
        $\theta_e \colon N_e \to N_{e + h}$ is an isomorphism.
    \end{itemize}
\end{definition}

\begin{remark} \label{rmk:chains-1-rdt}
    There are obvious natural truncation functors
    \[
        \curlybr*{\text{chains over $R$}} \lra \curlybr*{\text{$n$-truncated chains over $R$}}
    \]
    and
    \[
        \curlybr*{\text{$n'$-truncated chains over $R$}} \lra \curlybr*{\text{$n$-truncated chains over $R$}}
    \]
    for $n \leq n'$.

    When $R$ is of characteristic $p$,  we also define a truncation functor
    \[
        \curlybr*{\text{$1$-truncated chains over $R$}} \lra \curlybr*{\text{$(1 \blank \rdt)$-truncated chains over $R$}}
    \]
    by sending a $1$-truncated chain $((M_i)_i, (\rho_{i, j})_{i, j}, (\theta_i)_i)$ to the $(1 \blank \rdt)$-truncated chain $((N_e)_e, (\theta_e)_e)$ that is given as follows (where we write $e = (i, j)$): 
    \begin{itemize}
        \item
        $N_e \coloneqq \ker(\rho_{i, j}) \subseteq M_i$.

        \item
        $\theta_e \colon N_e \to N_{e + h}$ is the isomorphism induced by $\theta_i$.
    \end{itemize}
    Note that for $e = (i, j)$ as above we have an exact sequence
    \[
        M_i \xrightarrow{\rho_{i, j}} M_j \xrightarrow{\rho_{j, i + h}} M_{i + h} \xrightarrow{\rho_{i + h, j + h}} M_{j + h}, 
    \]
    so  we obtain an isomorphism $\ker(\rho_{i + h, j + h}) \cong \coker(\rho_{i, j})$.
    Composing this with $\theta_e$ yields an isomorphism $N_e \cong \coker(\rho_{i, j})$.
\end{remark}

\begin{remark}
    Let $((M_i)_i, (\rho_{i, j})_{i, j}, (\theta_i)_i)$ be a chain over $R$.
    Then actually already the morphism of $W(R)/p W(R)$-modules
    \[
        \rho_{i, j} \colon M_i/p M_i \lra M_j/p M_j
    \]
    is of constant rank $h - (j - i)$ for all $i \leq j \leq i + h$; this follows from \Cref{lem:chains-rank} applied to $A = W(R)$, $\rho = \rho_{i, j}$ and $\rho' = \theta_i^{-1} \circ \rho_{j, i + h}$.

    The same is true for $n$-truncated chains.
\end{remark}

\begin{definition}
    A \emph{chain of pairs $($of type $(h, J, d))$ over $R$} is a tuple
    \[
        \roundbr*{(M_i)_i, \left(\rho_{i, j}\right)_{i, j}, (\theta_i)_i, (M_{i, 1})_i}
    \]
    that is given as follows: 
    \begin{itemize}
        \item
        $((M_i)_i, (\rho_{i, j})_{i, j}, (\theta_i)_i)$ is a chain over $R$.

        \item
        $M_{i, 1} \subseteq M_i$ is a $W(R)$-submodule such that $(M_i, M_{i, 1})$ is a pair of type $(h, d)$ over $R$ and such that we have $\rho_{i, j}(M_{i, 1}) \subseteq M_{j, 1}$ and $\theta_i(M_{i, 1}) = M_{i + h, 1}$.
    \end{itemize}
    
    We similarly define the notion of an \emph{$m$-truncated chain of pairs over $R$}.
\end{definition}

\begin{proposition} \label{prop:m-1-tilde-chains}
    Let $((M_i)_i, (\rho_{i, j})_{i, j}, (\theta_i)_i, (M_{i, 1})_i)$ be a chain of pairs over $R$.
    Then the tuple
    \[
        \roundbr*{\left(\widetilde{M_{i, 1}}\right)_i, \left(\widetilde{\rho_{i, j}}\right)_{i, j}, \left(\widetilde{\theta_i}\right)_i}
    \]
    is a chain over $R$.

    The same is true for $m$-truncated chains of pairs $($where the construction yields an $n$-truncated chain$)$.
\end{proposition}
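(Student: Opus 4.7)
\emph{Proof proposal.}

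Most of the structural data in the conclusion is automatic from the functoriality of $(M, M_1) \mapsto \widetilde{M_1}$ recorded in \Cref{def:m-1-tilde}: this functor produces the underlying diagram $(\widetilde{M_{i,1}})_i$ together with the transition maps $\widetilde{\rho_{i,j}}$ and isomorphisms $\widetilde{\theta_i}$, and it yields the compatibility $\widetilde{\theta_j} \circ \widetilde{\rho_{i,j}} = \widetilde{\rho_{i+h, j+h}} \circ \widetilde{\theta_i}$. For the identity $\widetilde{\rho_{i, i+h}} = p \cdot \widetilde{\theta_i}$ it suffices to verify $\widetilde{p \cdot f} = p \cdot \widetilde{f}$ for every morphism of pairs $f$: in normal decomposition coordinates the only non-obvious entry is $(p c)^{\sigma, \divd}$, and the defining relation $p \cdot (-)^{\sigma, \divd} = (-)^\sigma$ yields $(pc)^{\sigma, \divd} = p \cdot c^{\sigma, \divd}$.

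The substantive content is the constant rank condition on $\widetilde{\rho_{i,j}}$ modulo $p$. To apply \Cref{lem:chains-rank}, I introduce the morphism of pairs $\rho'_{i,j} \coloneqq \theta_i^{-1} \circ \rho_{j, i+h}$. The chain axioms give $\rho'_{i,j} \circ \rho_{i,j} = p \cdot \id$ and $\rho_{i,j} \circ \rho'_{i,j} = p \cdot \id$, and by the previous paragraph the same identities hold after applying $\widetilde{(-)}$. Applying \Cref{lem:chains-rank} to $A = R$ (discrete topology; $p$ is topologically nilpotent since $R$ is $p$-nilpotent) with $M = R \otimes_{W(R)} \widetilde{M_{i,1}}$ and $M' = R \otimes_{W(R)} \widetilde{M_{j,1}}$ reduces the claim to verifying, for every algebraically closed residue field $k$ (necessarily of characteristic $p$) of $R$, that $\widetilde{\rho_{i,j}}$ induces a map of rank $h - (j - i)$ on $k \otimes_{W(R)} \widetilde{M_{i,1}}$.

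The hard step is this pointwise verification. Over such a perfect $k$ the map $W(R) \to k$ factors through $W(k)$, and the isomorphism $\widetilde{(M_{i,1})_k} \cong (M_{i,1})_k^\sigma$ of \Cref{rmk:m-1-tilde} identifies $\widetilde{\rho_{i,j}}$ with the Frobenius twist of the restriction $\rho_{i,j}|_{(M_{i,1})_k}$; since Frobenius on $k$ is an isomorphism the rank modulo $p$ is unchanged. To compute the rank of $\rho_{i,j}|_{(M_{i,1})_k}$ modulo $p$, I work over the DVR $W(k)$: the identity $\rho'_{i,j} \circ \rho_{i,j} = p \cdot \id$ forces the elementary divisors of $\rho_{i,j}$ to lie in $\{0, 1\}$, so its rank modulo $p$ equals $h - v_p(\det \rho_{i,j})$, and the same reasoning applies to $\rho_{i,j}|_{(M_{i,1})_k}$. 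Finally, writing $\iota_\star \colon M_{\star, 1} \hookrightarrow M_\star$ for the canonical inclusion (so that $\det \iota_\star = p^{h - d}$), the commutativity $\iota_j \circ \rho_{i,j}|_{(M_{i,1})_k} = \rho_{i,j} \circ \iota_i$ forces $\det \rho_{i,j}|_{(M_{i,1})_k} = \det \rho_{i,j}$, so the two mod-$p$ ranks agree and equal $h - (j - i)$ as required. The $m$-truncated case is handled analogously after lifting an $m$-truncated pair over a perfect $k$ to a genuine pair over $W(k)$.
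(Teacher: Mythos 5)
Your proof is correct and takes essentially the same route as the paper: both reduce the constant-rank claim via \Cref{lem:chains-rank} to an algebraically closed field $k$, compute the rank of $\rho_{i,j}|_{M_{i,1}}$ modulo $p$ over the DVR $W(k)$ from the commutative square with the inclusions $M_{\star,1}\hookrightarrow M_\star$ (your determinant-valuation count is the same as the paper's cokernel-length count), and conclude by twisting by $\sigma$. Your explicit verification of $\widetilde{p\cdot f}=p\cdot\widetilde{f}$ and of the structural identities is a harmless elaboration of details the paper leaves implicit.
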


\begin{proof}
    We only need to check that the morphism of $R/pR$-modules
    \[
        \widetilde{\rho_{i, j}} \colon R/pR \otimes_{W(R)} \widetilde{M_{i, 1}} \lra R/pR \otimes_{W(R)} \widetilde{M_{j, 1}}
    \]
    is of constant rank $h - (j - i)$ for all $i \leq j \leq i + h$.
    \Cref{lem:chains-rank} allows us to reduce to the case where $R = k$ is an algebraically closed field of characteristic $p$.
    The morphism $\rho_{i, j}$ then gives rise to a commutative diagram
    \[
    \begin{tikzcd}
        M_{i, 1} \ar[r, "\rho_{i, j}"] \ar[d]
        & M_{j, 1} \ar[d]
        \\
        M_i \ar[r, "\rho_{i, j}"]
        & M_j
    \end{tikzcd}
    \]
    of free $W(k)$-modules of rank $h$ where all the morphisms are injective.
    Moreover, the cokernels of the vertical morphisms have length $h - d$, and the cokernel of the lower horizontal morphism has length $j - i$.
    Thus  the cokernel of the upper horizontal morphism also has length $j - i$.
    As the image of $M_{i, 1} \to M_{j, 1}$ contains $p M_{j, 1}$, we can conclude that the morphism of $k$-vector spaces $\rho_{i, j} \colon M_{i, 1}/pM_{i, 1} \to M_{j, 1}/pM_{j, 1}$ is of rank $h - (j - i)$.
    Twisting by $\sigma$ then gives the result (see \Cref{rmk:m-1-tilde}).

    The result for $m$-truncated chains of pairs now follows because every such $m$-truncated chain of pairs over $R$ can be lifted to a (non-truncated) chain of pairs over $R$.
\end{proof}

\begin{definition} \label{def:chains-disps}
    A \emph{chain of displays over $R$} is a tuple
    \[
        \roundbr*{(M_i)_i, \left(\rho_{i, j}\right)_{i, j}, (\theta_i)_i, (M_{i, 1})_i, (\Psi_i)_i}
    \]
    that is given as follows: 
    \begin{itemize}
        \item
        $((M_i)_i, (\rho_{i, j})_{i, j}, (\theta_i)_i, (M_{i, 1})_i)$ is a chain of pairs over $R$.

        \item
        \[
            (\Psi_i)_i \colon \roundbr*{\left(\widetilde{M_{i, 1}}\right)_i, \left(\widetilde{\rho_{i, j}}\right)_{i, j}, \left(\widetilde{\theta_i}\right)_i} \lra \roundbr*{(M_i)_i, \left(\rho_{i, j}\right)_{i, j}, (\theta_i)_i}
        \]
        is an isomorphism of chains over $R$.
    \end{itemize}
    
    We similarly define the notion of an \emph{$(m, n)$-truncated chain of displays over $R$}.

    We write
    \[
        \catchdisps_{h, J, d} \quad \text{and} \quad \catchdisps_{h, J, d}^{(m, n)}
    \]
    for the stacks over $\Spf(\Zp)$ of chains of displays and $(m, n)$-truncated chains of displays (the stacks $\catchdisps_{h, J, d}^{(m, 1 \blank \rdt)}$ are in fact defined over $\Spec(\Fp)$).
\end{definition}

\begin{remark} \label{rmk:chains-disps}
    Let $((M_i)_i, (\rho_{i, j})_{i, j}, (\theta_i)_i, (M_{i, 1})_i, (\Psi_i)_i)$ be a chain of displays over $R$.
    Then $(M_i, M_{i, 1}, \Psi_i)$ is a display of type $(h, d)$ over $R$, and we have morphisms of displays
    \[
        \rho_{i, j} \colon \left(M_i, M_{i, 1}, \Psi_i\right) \lra \left(M_j, M_{j, 1}, \Psi_j\right)
        \quad \text{and} \quad
        \theta_i \colon \left(M_i, M_{i, 1}, \Psi_i\right) \lra \left(M_{i + h}, M_{i + h, 1}, \Psi_{i + h}\right).
    \]
    Conversely, suppose we are given displays $(M_i, M_{i, 1}, \Psi_i)$ of type $(h, d)$ over $R$ and morphisms of displays $\rho_{i, j}$ and $\theta_i$ as above, and assume that $((M_i)_i, (\rho_{i, j})_{i, j}, (\theta_i)_i)$ is a chain over $R$.
    Then the tuple $((M_i)_i, (\rho_{i, j})_{i, j}, (\theta_i)_i, (M_{i, 1})_i, (\Psi_i)_i)$ is a chain of displays over $R$.

    The same is true for $(m, n)$-truncated chains of displays.
\end{remark}

\subsection{Polarized chains} \label{subsec:polarized-chains}

Let us now specialize to the situation where $h = 2g$ is even, $d = g$ and $-J = J$.
Given $e = (i, j) \in \calE$ we write $-e \coloneqq (-j, -i) \in \calE$.

\begin{remark}
    Recall that the categories of pairs and displays (over $R$) and their truncated variants carry a natural duality and a compatible action of a suitable rigid symmetric monoidal category; see \Cref{subsec:duals-twists}.

    Consequently, we obtain a similar structure on the categories of chains, chains of pairs, chains of displays and their truncated variants.
    The various base change, forgetful and truncation functors are naturally compatible with this extra structure.
    The situation is summarized by the following table: 

    \smallskip

    \begin{center}
    \begin{tabular}{c | c | c}
        \centering
        Category with duality
        & Symmetric monoidal category
        & Coefficients
        \\
        \hline
        chains (over $R$)
        & invertible $W(R)$-modules
        & $W(R)$
        \\
        chains of pairs
        & invertible $W(R)$-modules
        & $W(R)$
        \\
        chains of displays
        & \makecell{tuples $(I, \iota)$; $I$ an invertible $W(R)$-module \\ and $\iota \colon I^{\sigma} \to I$ an isomorphism}
        & $\Zp(R)$
        \\
        \hline
        $n$-truncated chains ($n \neq 1 \blank \rdt$)
        & invertible $W_n(R)$-modules
        & $W_n(R)$
        \\
        $1 \blank \rdt$-truncated chains
        & invertible $R$-modules
        & $R$
        \\
        $m$-truncated chains of pairs
        & invertible $W_m(R)$-modules
        & $W_m(R)$
        \\
        \makecell{$(m, n)$-truncated chains of displays \\ ($n \neq 1 \blank \rdt$)}
        & \makecell{tuples $(I, \iota)$; $I$ an invertible $W_m(R)$-module \\ and $\iota \colon W_n(R) \otimes_{\sigma, W_m(R)} I \to W_n(R) \otimes_{W_m(R)} I$ \\ an isomorphism}
        & $W_m(R)^{\sigma = \id}$
        \\
        $(m, 1 \blank \rdt)$-truncated chains of displays
        & \makecell{tuples $(I, \iota)$; $I$ an invertible $W_m(R)$-module \\ and $\iota \colon R \otimes_{\sigma, W_m(R)} I \to R \otimes_{W_m(R)} I$ \\ an isomorphism}
        & $W_m(R)^{\sigma = \id}$
        \\
    \end{tabular}
    \end{center}

    \bigskip

    For example, the dual of a chain $((M_i)_i, (\rho_{i, j})_{i, j}, (\theta_i)_i)$ over $R$ is
    \[
        \roundbr*{(M_i)_i, \left(\rho_{i, j}\right)_{i, j}, (\theta_i)_i}^{\vee} = \roundbr*{\left(M_{-i}^{\vee}\right)_i, \left(\rho_{-j, -i}^{\vee}\right)_{i, j}, \left(\theta_{-i-h}^{\vee}\right)_i}, 
    \]
    and its twist by an invertible $W(R)$-module $I$ is
    \[
        I \otimes \roundbr*{(M_i)_i, \left(\rho_{i, j}\right)_{i, j}, (\theta_i)_i} = \roundbr*{\left(I \otimes_{W(R)} M_i\right)_i, \left(\id_I \otimes \rho_{i, j}\right)_{i, j}, \left(\id_I \otimes \theta_i\right)_i}.
    \]
    Let us also spell out the duality coherence datum for the truncation functor
    \[
        \curlybr*{\text{$1$-truncated chains over $R$}} \lra \curlybr*{\text{$(1 \blank \rdt)$-truncated chains over $R$}}
    \]
    for $R$ of characteristic $p$.
    Let $((M_i)_i, (\rho_{i, j})_{i, j}, (\theta_i)_i)$ be a $1$-truncated chain over $R$, let $((N_e)_e, (\theta_e)_e)$ be its $(1 \blank \rdt)$-truncation, and let $((N'_e)_e, (\theta'_e)_e)$ be the $(1 \blank \rdt)$-truncation of its dual.
    Then we specify the natural isomorphism
    \[
        \roundbr*{\left(N'_e\right)_e, \left(\theta'_e\right)_e} \lra \roundbr*{\left(N_e\right)_e, \left(\theta_e\right)_e}^{\vee} = \roundbr*{\left(N_{-e}^{\vee}\right)_e, \left(\theta_{-e-h}^{\vee}\right)_e}
    \]
    to be given by
    \[
        N'_e \cong \ker\left(\rho_{-j, -i}^{\vee}\right) \lra \coker\left(\rho_{-j, -i}\right)^{\vee} \cong N_{-e}^{\vee}.
    \]
\end{remark}

\begin{definition} \label{def:polarized-chains}
    A \emph{polarized chain $($of type $(g, J))$ over $R$} is a polarized object in the category of chains (of type $(2g, J, g)$) over $R$, \textit{i.e.}~a tuple
    \[
        \roundbr*{(M_i)_i, \left(\rho_{i, j}\right)_{i, j}, (\theta_i)_i, (\lambda_i)_i}
    \]
    that is given as follows: 
    \begin{itemize}
        \item
        $((M_i)_i, (\rho_{i, j})_{i, j}, (\theta_i)_i)$ is a chain over $R$.

        \item
        $(\lambda_i)_i \colon ((M_i)_i, (\rho_{i, j})_{i, j}, (\theta_i)_i) \to ((M_i)_i, (\rho_{i, j})_{i, j}, (\theta_i)_i)^{\vee}$ is an antisymmetric isomorphism.
    \end{itemize}

    A \emph{homogeneously polarized chain over $R$} is a homogeneously polarized object in the category of chains over $R$, \textit{i.e.}~a tuple
    \[
        \roundbr*{(M_i)_i, \left(\rho_{i, j}\right)_{i, j}, (\theta_i)_i, I, (\lambda_i)_i}
    \]
    that is given as follows: 
    \begin{itemize}
        \item
        $((M_i)_i, (\rho_{i, j})_{i, j}, (\theta_i)_i)$ is a chain over $R$.

        \item
        $I$ is an invertible $W(R)$-module.

        \item
        $(\lambda_i)_i \colon ((M_i)_i, (\rho_{i, j})_{i, j}, (\theta_i)_i) \to I \otimes ((M_i)_i, (\rho_{i, j})_{i, j}, (\theta_i)_i)^{\vee}$ is an antisymmetric isomorphism.
    \end{itemize}

    In the same way we define the notions of \emph{$($homogeneously$)$ polarized chains of pairs and displays} as well as their truncated variants.

    We write
    \[
        \catpolchdisps_{g, J}, \quad
        \cathpolchdisps_{g, J}, \quad
        \catpolchdisps_{g, J}^{(m, n)}, \quad
        \cathpolchdisps_{g, J}^{(m, n)}
    \]
    for the stacks over $\Spf(\Zp)$ of polarized chains of displays, homogeneously polarized chains of displays and their truncated variants.
\end{definition}

\begin{remark} \label{rmk:m-1-tilde-polarized-chains}
    The functor
    \[
    \begin{array}{r c l}
        \curlybr*{\text{chains of pairs over $R$}}
        & \lra
        & \curlybr*{\text{chains over $R$}} 
        \\
        \roundbr*{(M_i)_i, \left(\rho_{i, j}\right)_{i, j}, (\theta_i)_i, (M_{i, 1})_i}
        & \longmapsto
        & \roundbr*{\left(\widetilde{M_{i, 1}}\right)_i, \left(\widetilde{\rho_{i, j}}\right)_{i, j}, \left(\widetilde{\theta_i}\right)_i}
    \end{array}
    \]
    (see \Cref{prop:m-1-tilde-chains}) is naturally compatible with dualities and equivariant with respect to the symmetric monoidal functor
    \[
        \curlybr*{\text{invertible $W(R)$-modules}} \lra \curlybr*{\text{invertible $W(R)$-modules}}, \quad I \longmapsto I^{\sigma};
    \]
    it thus induces functors
    \[
        \curlybr*{\text{(homogeneously) polarized chains of pairs over $R$}} \lra \curlybr*{\text{(homogeneously) polarized chains over $R$}}.
    \]
    A polarized chain of displays over $R$ can now be equivalently described as consisting of a polarized chain of pairs $((M_i)_i, (\rho_{i, j})_{i, j}, (\theta_i)_i, (M_{i, 1})_i, (\lambda_i)_i)$ over $R$ together with an isomorphism
    \[
        \left(\Psi_i\right)_i \colon \roundbr*{\left(\widetilde{M_{i, 1}}\right)_i, \left(\widetilde{\rho_{i, j}}\right)_{i, j}, \left(\widetilde{\theta_i}\right)_i, \left(\widetilde{\lambda_i}\right)_i} \lra \roundbr*{(M_i)_i, \left(\rho_{i, j}\right)_{i, j}, (\theta_i)_i, (\lambda_i)_i}, 
    \]
    and a homogeneously polarized chain of displays over $R$ can be equivalently described as consisting of a homogeneously polarized chain of pairs $((M_i)_i, (\rho_{i, j})_{i, j}, (\theta_i)_i, (M_{i, 1})_i, I, (\lambda_i)_i)$ over $R$ together with an isomorphism
    \[
        \roundbr*{\left(\Psi_i\right)_i, \iota} \colon \roundbr*{\left(\widetilde{M_{i, 1}}\right)_i, \left(\widetilde{\rho_{i, j}}\right)_{i, j}, \left(\widetilde{\theta_i}\right)_i, I^{\sigma}, (\widetilde{\lambda_i})_i} \lra \roundbr*{(M_i)_i, (\rho_{i, j})_{i, j}, (\theta_i)_i, I, (\lambda_i)_i}.
    \]

    The same is true for $(m, n)$-truncated (homogeneously) polarized chains of displays over $R$.
\end{remark}

\begin{lemma} \label{lem:restricting-pol-hpol}
    The morphism of stacks
    \[
        \catpolchdisps_{g, J} \lra \cathpolchdisps_{g, J}
    \]
    is a torsor under the flat affine $\Zp$-group scheme $\underline{\Zp^{\times}}$.

    Similarly, the morphism of stacks
    \[
        \catpolchdisps_{g, J}^{(m, n)} \lra \cathpolchdisps_{g, J}^{(m, n)}
    \]
    is a torsor under the smooth affine $\Zp$-group scheme $(L^{(m)} \Gm)^{\sigma = \id}$ that is given by $R \mapsto W_m(R)^{\times, \sigma = \id}$.
\end{lemma}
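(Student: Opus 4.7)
The plan is to check the three standard conditions for a torsor: construction of an action, freeness, and local triviality. I describe the argument for the untruncated assertion; the truncated case is handled identically with $W(R)$ replaced by $W_m(R)$ (and by $W_n(R)$ where $\sigma$ lands) and $\underline{\Zp^\times}$ replaced by $(L^{(m)} \Gm)^{\sigma = \id}$. Using the identification $\Zp(R) = W(R)^{\sigma = \id}$, a section $u \in \underline{\Zp^\times}(R)$ corresponds to a unit $u \in W(R)^\times$ with $\sigma(u) = u$; it acts on a polarized chain of displays by the rescaling $\lambda_i \mapsto u \lambda_i$ of each polarization. The resulting polarized chain has the same underlying homogeneously polarized chain up to canonical isomorphism, namely multiplication by $u$ on the similitude factor $I = W(R)$; this is a well-defined isomorphism $(W(R), \id) \to (W(R), \id)$ of similitude data exactly because $\sigma(u) = u$.

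For freeness, I would compute automorphisms of the unit object $(W(R), \id)$ in the symmetric monoidal category of similitude data from \Cref{def:polarized-chains}. An endomorphism is multiplication by some $u \in W(R)$, and compatibility with $\iota = \id$ after Frobenius twist forces $u = \sigma(u)$. Thus the automorphism group is $W(R)^{\times, \sigma = \id} = \underline{\Zp^\times}(R)$, and analogously $W_m(R)^{\times, \sigma = \id} = (L^{(m)} \Gm)^{\sigma = \id}(R)$ in the truncated case.

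The remaining step is local triviality: every homogeneously polarized chain is fpqc-locally on $R$ in the image of the forgetful functor. Since a homogeneously polarized chain differs from a polarized one only in the choice of similitude datum $(I, \iota)$, the claim reduces to showing that every such $(I, \iota)$ is fpqc-locally isomorphic to $(W(R), \id)$. Since $\Spec(W(R))$ and $\Spec(R)$ have homeomorphic underlying spaces, we may Zariski-locally trivialize $I = W(R) \cdot e$, making $\iota$ multiplication by some $f \in W(R)^\times$; rewriting $\iota$ as the identity in a new basis $e' = ge$ amounts to solving the Lang-type equation $g = f \sigma(g)$ in $W(R)^\times$.

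The hard part is establishing the fpqc-local solvability of this Lang-type equation, equivalently that $g \mapsto g \sigma(g)^{-1}$ is an fpqc epimorphism on the relevant (pro-)smooth Witt-vector loop group. In the truncated case, $L^{(m)} \Gm$ is a smooth affine $\Zp$-group scheme and the claim amounts to fppf-exactness of $1 \to (L^{(m)} \Gm)^{\sigma = \id} \to L^{(m)} \Gm \to L^{(n)} \Gm \to 1$ with middle map $g \mapsto g \sigma(g)^{-1}$; this can be verified by d\'evissage along the Witt-vector filtration, reducing to the Artin-Schreier-Witt surjectivity of $1 - \sigma$ on the additive Witt-vector group. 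The untruncated case then follows by passing to the inverse limit, after which the torsor assertion is formal from the description of homogeneous polarization as twisting by $(I, \iota)$.
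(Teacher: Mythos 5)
Your proposal is correct and follows essentially the same route as the paper: the paper's proof consists of citing \Cref{lem:pol-hpol-pullback} (whose content is exactly your action/freeness analysis via automorphisms of the unit similitude datum $(W(R),\id)$) together with the assertion that every $(I,\iota)$ is fpqc-locally trivial. The only difference is that you additionally sketch a proof of that trivializability via the Lang-type equation $f = g\sigma(g)^{-1}$, a standard fact which the paper states without proof.
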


\begin{proof}
    This follows from \Cref{lem:pol-hpol-pullback} and the fact that every tuple $(I, \iota)$ consisting of an invertible $W(R)$-module $I$ together with an isomorphism $\iota \colon I^{\sigma} \to I$ can be trivialized after base changing along a faithfully flat ring homomorphism $R \to R'$.
\end{proof}

\subsection{Quotient stack descriptions} \label{subsec:quotient-stack}

We are now interested in finding quotient stack descriptions of the stack of chains of displays $\catchdisps_{h, J, d}$ and its truncated and (homogeneously) polarized variants.

Let us first return to the situation of \Cref{subsec:chains-pairs-displays}, where $h$, $J$ and $d$ are arbitrary.
Fix a $\Qp$-vector space $V$ of dimension $h$ and a tuple $(\Lambda_i)_{i \in J}$ of $\Zp$-lattices $\Lambda_i \subseteq V$ such that the following conditions are satisfied: 
\begin{itemize}
    \item
    For $i \leq j$ we have $\Lambda_i \subseteq \Lambda_j$, and the $\Zp$-module $\Lambda_j/\Lambda_i$ is of length $j - i$.

    \item
    $\Lambda_{i + h} = p^{-1} \Lambda_i$.
\end{itemize}
For $i \leq j$ we write $\rho_{i, j} \colon \Lambda_i \to \Lambda_j$ for the inclusion, and we also write $\theta_i \colon \Lambda_i \to \Lambda_{i + h}$ for the isomorphism given by multiplication with $p^{-1}$.

We then have the associated parahoric $\Zp$-group scheme
\[
    \GL \roundbr*{(\Lambda_i)_i} \coloneqq \Aut \roundbr*{(\Lambda_i)_i, \left(\rho_{i, j}\right)_{i, j}, (\theta_i)_i}
\]
with generic fiber $\GL((\Lambda_i)_i)_{\Qp} = \GL(V)$ and the \emph{local model}
\[
    \bbM^{\loc, \GL} \colon R \longmapsto \left\{(C_i)_i\;\Bigg\vert\;\begin{gathered} \text{$C_i \subseteq \Lambda_{i, R}$ a direct summand of rank $d$} \\ \text{such that $\rho_{i, j}(C_i) \subseteq C_j$ and $\theta_i(C_i) = C_{i + h}$} \end{gathered}\right\}
\]
that is a projective $\Zp$-scheme with a natural $\GL((\Lambda_i)_i)$-action whose generic fiber is the Grassmannian of $d$-dimensional subspaces of $V$; see \cite[Theorem 3.11 and Definition 3.27]{rapoport-zink-96}.

In fact, by \cite[Theorem 4.25]{goertz-flatness-local-models-linear}, $\bbM^{\loc, \GL}$ is flat over $\Zp$, its special fiber is reduced, and the irreducible components of its special fiber are normal with rational singularities.
We write $\rmM^{\loc, \GL}$ for the $p$-completion of $\bbM^{\loc, \GL}$.

Also note  that the reductive quotient $\GL((\Lambda_i)_i)_{\Fp}^{\rdt}$ of the special fiber of $\GL((\Lambda_i)_i)$ identifies with
\[
    \GL\roundbr*{(\Lambda_e)_e} \coloneqq \Aut \roundbr*{(\Lambda_e)_e, (\theta_e)_e} \cong \prod_{e \in \calE/h\ZZ} \GL(\Lambda_e),
\]
where
\[
    \Lambda_e \coloneqq \ker \roundbr*{\rho_{i, j} \colon \Lambda_i/p \Lambda_i \lra \Lambda_j/p \Lambda_j}
\]
and $\theta_e \colon \Lambda_e \to \Lambda_{e + h}$ is the isomorphism induced by $\theta_i$ (for $e = (i, j) \in \calE$).

\begin{lemma} \label{lem:chains-torsors}
    We have natural equivalences of groupoids
    \[
        \curlybr*{\text{chains over $R$}} \lra \curlybr*{\text{$L^+ \GL \roundbr*{(\Lambda_i)_i}$-torsors over $R$}}, 
    \]
    \[
        \curlybr*{\text{$n$-truncated chains over $R$}} \lra \curlybr*{\text{$L^{(n)} \GL\roundbr*{(\Lambda_i)_i}$-torsors over $R$}}
    \]
    for $n \neq 1 \blank \rdt$.
    When $R$ is of characteristic $p$, we also have a natural equivalence of groupoids
    \[
        \curlybr*{\text{$(1 \blank \rdt)$-truncated chains over $R$}} \lra \curlybr*{\text{$\GL\roundbr*{(\Lambda_e)_e}$-torsors over $R$}}.
    \]
\end{lemma}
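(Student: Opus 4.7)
My plan for the first equivalence is the standard $\underline{\mathrm{Isom}}$-torsor yoga. Fix the tautological chain $\calM_{\std} \coloneqq ((\Lambda_i)_i, (\rho_{i, j})_{i, j}, (\theta_i)_i)$ over $\Zp$ and send a chain $\calM$ over $R$ to
\[
    T(\calM) \coloneqq \underline{\mathrm{Isom}} \roundbr[\big]{(\calM_{\std})_R, \calM}.
\]
By the very definition of the parahoric group scheme $\GL((\Lambda_i)_i)$, precomposition by automorphisms of $(\calM_{\std})_R$ endows $T(\calM)$ with a free right action of $L^+\GL((\Lambda_i)_i)_R$, and representability as an affine $R$-scheme follows from the representability of $W$ as an affine $\Zp$-scheme (the isomorphism condition is Zariski-open inside the corresponding $\underline{\Hom}$-scheme). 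The quasi-inverse will send a torsor $T$ to the twist $T \times^{L^+\GL((\Lambda_i)_i)} (\calM_{\std})_R$, which makes sense and produces a chain over $R$ by fpqc descent of chain data (apply \Cref{rmk:base-changing-pairs} term by term, together with the observation that the constant-rank condition on the $\rho_{i, j}$ modulo $p$ is fpqc-local).

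The main obstacle will be upgrading the pseudo-torsor $T(\calM)$ to a torsor, i.e.\ producing fpqc-local sections. First I would verify non-emptiness of $T(\calM)$ at every geometric point $\Spec k \to \Spec R$ with $k$ an algebraically closed field of characteristic $p$: there $W(k)$ is a complete DVR, each $M_i$ is a free $W(k)$-module of rank $h$, and the identity $\rho_{i, i + h} = p \theta_i$ together with the constant-rank condition on $\rho_{i, j}$ modulo $p$ forces $\calM$ to be isomorphic to $(\calM_{\std})_{W(k)}$ by the classical elementary-divisor classification of lattice chains over a DVR, cf.\ \cite[Appendix A]{rapoport-zink-96}. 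Given such geometric points, I would then lift to fpqc-local sections using that $L^+\GL((\Lambda_i)_i)$ is pro-smooth flat affine over $\Zp$: deformations of a section of $T(\calM)$ along nilpotent thickenings are unobstructed, and the standard Artin-style approximation then yields sections in étale neighborhoods.

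For the $n$-truncated assertion with $n \neq 1 \blank \rdt$ the same argument works verbatim after replacing $W(R)$ by $W_n(R)$ and $L^+\GL((\Lambda_i)_i)$ by the smooth affine group scheme $L^{(n)}\GL((\Lambda_i)_i)$; the rigidity input at a geometric point can be deduced by lifting an $n$-truncated chain over $W_n(k)$ to a non-truncated chain over $W(k)$ (stepwise along the nilpotent kernel of $W(R) \twoheadrightarrow W_n(R)$, using that a normal decomposition always lifts along a nilpotent ideal) and invoking the non-truncated case. In the $1 \blank \rdt$-truncated case over a characteristic $p$ ring $R$, the datum $((N_e)_e, (\theta_e)_e)$ consists of finite projective $R$-modules $N_e$ of constant rank $\abs{e}$ together with isomorphisms $\theta_e \colon N_e \to N_{e + h}$; Zariski-locally on $R$ such data are trivializable by the structure theorem for projective modules of fixed rank, which directly yields the desired $\prod_{e \in \calE/h\ZZ} \GL(\Lambda_e) = \GL((\Lambda_e)_e)$-torsor structure on the $\underline{\mathrm{Isom}}$-sheaf, and hence the third equivalence.
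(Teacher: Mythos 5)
Your overall strategy --- identifying a chain with the $\underline{\mathrm{Isom}}$-pseudo-torsor against the standard chain $(\Lambda_{i, W(R)})_i$ and inverting by twisting --- is exactly the mechanism behind the paper's (one-line) proof; the paper simply outsources the two nontrivial inputs to \cite[Theorem 3.11]{rapoport-zink-96} (étale-local triviality of chains) and \cite[Lemma 2.12]{bueltel-hedayatzadeh} (passage between torsors over $W(R)$ and over $R$). The parts of your sketch dealing with the pseudo-torsor structure, descent for the quasi-inverse, the geometric-point computation over the complete DVR $W(k)$, and the $1 \blank \rdt$ case are all fine.

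The gap is in the step that promotes non-emptiness at geometric points to actual local sections. \enquote{Artin-style approximation} does not apply to $T(\calM)$: the $\underline{\mathrm{Isom}}$-sheaf of chains of $W(R)$-modules is not locally of finite presentation over $R$ (it is a pro-object of finitely presented affine schemes), and even at the truncated level Artin approximation would require knowing in advance that $T_n(\calM) \to \Spec(R)$ is smooth, or at least flat --- but a pseudo-torsor under a smooth group scheme that is merely surjective on geometric points need not be flat, so this is circular. Likewise, the claim that \enquote{deformations of a section along nilpotent thickenings are unobstructed} is precisely the assertion that an isomorphism of chains lifts along a square-zero extension compatibly with all the $\rho_{i, j}$ and $\theta_i$; this is the actual content of \cite[Theorem 3.11 and Appendix A]{rapoport-zink-96} and is not automatic (lifting each $\phi_i$ separately is easy; making the lifts commute with the transition maps and preserve the periodicity is not). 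The standard repair is to decouple the two difficulties: first apply the Rapoport--Zink local-triviality theorem to the chain of $R$-modules $(M_i/I_R M_i)_i$ to obtain an étale cover of $\Spec(R)$ over which the reduction is isomorphic to the standard chain, and then lift this isomorphism through the Henselian surjection $W(R) \to R$ using the smoothness of the affine group scheme $\GL((\Lambda_i)_i)$ (the same Henselian-lifting mechanism used for normal decompositions in \Cref{subsec:pairs-displays}); this second step is what \cite[Lemma 2.12]{bueltel-hedayatzadeh} encapsulates. If you want to avoid citing Rapoport--Zink, you must supply the deformation argument for chain isomorphisms yourself rather than asserting it.
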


\begin{proof}
    This follows from \cite[Theorem 3.11]{rapoport-zink-96} and \cite[Lemma 2.12]{bueltel-hedayatzadeh}.
\end{proof}

\begin{lemma} \label{lem:pairs-quotient-stack}
    We have natural equivalences of groupoids
    \[
        \curlybr*{\text{chains of pairs over $R$}} \lra \squarebr*{L^+ \GL \roundbr*{(\Lambda_i)_i} \backslash \rmM^{\loc, \GL}}(R), 
    \]
    \[
        \curlybr*{\text{$m$-truncated chains of pairs over $R$}} \lra \squarebr*{L^{(m)} \GL \roundbr*{(\Lambda_i)_i} \backslash \rmM^{\loc, \GL}}(R).
    \]
\end{lemma}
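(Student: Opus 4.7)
The plan is to combine \Cref{lem:chains-torsors} with the observation that a pair-structure on a (chain of) finite projective modules is equivalent to the data of a single direct summand of the mod-$I_R$ reduction. Recall that by definition of a pair $(M, M_1)$, the inclusion $I_R M \subseteq M_1$ together with the condition that $M_1/I_R M$ be a direct summand of $M/I_R M$ means that $M_1$ is uniquely recovered as the preimage of the direct summand $C \coloneqq M_1/I_R M \subseteq M/I_R M = R \otimes_{W(R)} M$ along the projection $M \twoheadrightarrow M/I_R M$. Conversely, given any direct summand $C \subseteq R \otimes_{W(R)} M$ of rank $d$, its preimage in $M$ defines an $M_1$ turning $(M, M_1)$ into a pair of type $(h, d)$.

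Applying this pointwise, a chain of pairs of type $(h, J, d)$ over $R$ is canonically the same datum as a chain $((M_i)_i, (\rho_{i, j})_{i, j}, (\theta_i)_i)$ together with, for each $i \in J$, a direct summand $C_i \subseteq R \otimes_{W(R)} M_i$ of rank $d$, such that $\rho_{i, j}(C_i) \subseteq C_j$ and $\theta_i(C_i) = C_{i + h}$ (the compatibility conditions $\rho_{i,j}(M_{i,1}) \subseteq M_{j,1}$ and $\theta_i(M_{i,1}) = M_{i+h,1}$ become these conditions on passing to the quotient, and conversely lift uniquely because $M_{i,1}$ is the preimage of $C_i$). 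Under \Cref{lem:chains-torsors} the datum of the underlying chain corresponds to an $L^+ \GL((\Lambda_i)_i)$-torsor $P$ over $R$, and after a local trivialization $M_i \cong W(R) \otimes_{\Zp} \Lambda_i$ one has $R \otimes_{W(R)} M_i \cong \Lambda_{i, R}$; the tuple of direct summands $(C_i)_i$ then becomes precisely an $R$-point of $\bbM^{\loc, \GL}$. The ambiguity of trivialization is the action of $L^+ \GL((\Lambda_i)_i)$ on $\bbM^{\loc, \GL}$, so the pair $(P, (C_i)_i)$ canonically defines an $L^+ \GL((\Lambda_i)_i)$-equivariant morphism $P \to \rmM^{\loc, \GL}$ (we may pass to $\rmM^{\loc, \GL}$ since $R$ is $p$-nilpotent), i.e.\ an object of $[L^+ \GL((\Lambda_i)_i) \backslash \rmM^{\loc, \GL}](R)$. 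This construction is fully faithful because morphisms in both sides are (locally) given by elements of $L^+ \GL((\Lambda_i)_i)$ fixing the chosen direct summands, and essentially surjective because the construction can be reversed.

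The $m$-truncated case runs along exactly the same lines: truncated pairs are again equivalent to chains together with direct summands $C_i \subseteq R \otimes_{W_m(R)} M_i$ of rank $d$ (using $W_m(R)/I_{m, R} = R$), and the truncated version of \Cref{lem:chains-torsors} identifies the underlying $m$-truncated chain with an $L^{(m)} \GL((\Lambda_i)_i)$-torsor, after which the same trivialization argument produces the desired equivalence. The only step that requires real input is the identification of chains with torsors, which is already provided by \Cref{lem:chains-torsors}; so I expect no substantive obstacle beyond carefully spelling out the equivalence of categories and checking functoriality of the constructions in both directions.
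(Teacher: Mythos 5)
Your proposal is correct and follows essentially the same route as the paper: identify the underlying chain with an $L^+ \GL((\Lambda_i)_i)$-torsor via \Cref{lem:chains-torsors}, and observe that the datum of the submodules $M_{i,1}$ is equivalent to the datum of compatible rank-$d$ direct summands $C_i \subseteq M_i/I_R M_i$, which by the moduli description of $\rmM^{\loc, \GL}$ is exactly an equivariant morphism from the torsor to $\rmM^{\loc, \GL}$. The paper merely runs the dictionary in the opposite direction (starting from a point of the quotient stack), which is an immaterial difference.
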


\begin{proof}
    An object in $\squarebr{L^+ \GL((\Lambda_i)_i) \backslash \rmM^{\loc, \GL}}(R)$ is given by a $\GL((\Lambda_i)_i)$-torsor $\calP$ over $W(R)$ together with a $\GL((\Lambda_i)_i)$-equivariant morphism $q \colon \calP_R \to \rmM^{\loc, \GL}$.
    By \Cref{lem:chains-torsors}, $\calP$ corresponds to a chain $((M_i)_i, (\rho_{i, j})_{i, j}, (\theta_i)_i)$ over $R$, and from the definition of $\rmM^{\loc, \GL}$, we see that $q$ corresponds to a tuple $(C_i)_i$ of direct summands $C_i \subseteq M_i/I_R M_i$ of rank $d$ that are compatible under $\rho_{i, j}$ and $\theta_i$.
    Giving such a tuple $(C_i)_i$ is evidently equivalent to giving a tuple $(M_{i, 1})_i$ of $W(R)$-submodules $M_{i, 1} \subseteq M_i$ such that $((M_i)_i, (\rho_{i, j})_{i, j}, (\theta_i)_i, (M_{i, 1})_i)$ is a chain of pairs over $R$.
\end{proof}

\begin{definition} \label{def:m-loc-gl-+}
    Let
    \[
        \rmM^{\loc, \GL, +} \lra \rmM^{\loc, \GL}
    \]
    be the $L^+ \GL((\Lambda_i)_i)$-equivariant $L^+ \GL((\Lambda_i)_i)$-torsor that corresponds to the functor from \Cref{prop:m-1-tilde-chains} under the equivalences from \Cref{lem:chains-torsors,lem:pairs-quotient-stack}.

    Explicitly, a point in $\rmM^{\loc, \GL, +}(R)$ is given by a chain of pairs $((M_i)_i, (\rho_{i, j})_{i, j}, (\theta_i)_i, (M_{i, 1})_{i, 1})$ over $R$ together with trivializing isomorphisms
    \[
        \roundbr*{\left(\widetilde{M_{i, 1}}\right)_i, \left(\widetilde{\rho_{i, j}}\right)_{i, j}, \left(\widetilde{\theta_i}\right)_i} \lra \roundbr*{\left(\Lambda_{i, W(R)}\right)_i, \left(\rho_{i, j}\right)_{i, j}, (\theta_i)_i}
    \]
    and
    \[
        \roundbr*{(M_i)_i, \left(\rho_{i, j}\right)_{i, j}, (\theta_i)_i} \lra \roundbr*{\left(\Lambda_{i, W(R)}\right)_i, \left(\rho_{i, j}\right)_{i, j}, (\theta_i)_i}.
    \]
    The group $L^+ \GL((\Lambda_i)_i) \times L^+ \GL((\Lambda_i)_i)$ acts on $\rmM^{\loc, \GL, +}$ by changing the two trivializations; the projection $\rmM^{\loc, \GL, +} \to \rmM^{\loc}$ is then a torsor with respect to the action of the first copy of $L^+ \GL((\Lambda_i)_i)$ and equivariant with respect to the action of the second copy of $L^+ \GL((\Lambda_i)_i)$.

    For $n \neq 1 \blank \rdt$ we write
    \[
        \rmM^{\loc, \GL, (n)} \lra \rmM^{\loc}
    \]
    for the reduction of $\rmM^{\loc, \GL, +}$ to an $L^{(n)} \GL((\Lambda_i)_i)$-torsor.
    By \Cref{prop:m-1-tilde-chains} the action of the second copy of $L^+ \GL((\Lambda_i)_i)$ on $\rmM^{\loc, \GL, (n)}$ factors through $L^{(m)} \GL((\Lambda_i)_i)$.

    Finally we also write
    \[
        \rmM^{\loc, \GL, (1 \blank \rdt)} \lra \rmM^{\loc}_{\Fp}
    \]
    for the reduction of $\rmM^{\loc, \GL, +}_{\Fp}$ to a $\GL((\Lambda_e)_e)$-torsor.
\end{definition}

\begin{proposition} \label{lem:disps-quotient-stack}
    We have equivalences of stacks over $\Spf(\Zp)$
    \[
        \catchdisps_{h, J, d} \lra \squarebr*{\roundbr*{L^+ \GL ((\Lambda_i)_i)}_{\Delta} \backslash \rmM^{\loc, \GL, +}}
        \quad \text{and} \quad
        \catchdisps_{h, J, d}^{(m, n)} \lra \squarebr*{\roundbr*{L^{(m)} \GL ((\Lambda_i)_i)}_{\Delta} \backslash \rmM^{\loc, \GL, (n)}},
    \]
    where the subscript $\Delta$ indicates that we take the quotient by the diagonal action.

    In particular, $\catchdisps_{h, J, d}^{(m, n)}$ is a $p$-adic formal algebraic stack of finite presentation over $\Spf(\Zp)$ $($an algebraic stack of finite presentation over $\Spec(\Fp)$ when $n = 1 \blank \rdt)$, and for $(m, n) \leq (m', n')$ the morphism $\catchdisps_{h, J, d}^{(m', n')} \to \catchdisps_{h, J, d}^{(m, n)}$ is smooth.
\end{proposition}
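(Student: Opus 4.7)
My plan is to construct the two claimed equivalences by matching functors of points on both sides, then to derive the geometric assertions from the resulting quotient stack presentation. For the non-truncated statement, given a chain of displays $\roundbr[\big]{(M_i)_i, (\rho_{i, j})_{i, j}, (\theta_i)_i, (M_{i, 1})_i, (\Psi_i)_i}$ over $R$, I will first use \Cref{lem:pairs-quotient-stack} to interpret the underlying chain of pairs as a point of $\squarebr[\big]{L^+ \GL((\Lambda_i)_i) \backslash \rmM^{\loc, \GL}}(R)$. After passing to a faithfully flat cover $R \to R'$ I pick a trivialization $\beta$ of $((M_i)_i, (\rho_{i, j})_{i, j}, (\theta_i)_i)_{R'}$ against the constant chain; composition with $(\Psi_i)_i$ then produces a trivialization of the $\widetilde{\cdot}$-chain, and the pair of trivializations is precisely a point of $\rmM^{\loc, \GL, +}(R')$. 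Replacing $\beta$ by $g \cdot \beta$ for $g \in L^+ \GL((\Lambda_i)_i)(R')$ changes both trivializations by the same element $g$, so the induced $R'$-point of $\squarebr[\big]{(L^+ \GL((\Lambda_i)_i))_{\Delta} \backslash \rmM^{\loc, \GL, +}}$ is independent of $\beta$ and descends to an $R$-point.

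For the inverse direction, a point of the diagonal quotient stack corresponds fpqc-locally to a chain of pairs equipped with trivializations of both its underlying chain and its $\widetilde{\cdot}$-chain; composing one with the inverse of the other produces an isomorphism $(\Psi_i)_i$ from the $\widetilde{\cdot}$-chain to the underlying chain, and the diagonal quotient identification ensures that $(\Psi_i)_i$ is independent of the common ambiguity in the trivializations, so it descends to give a chain of displays over $R$. The $(m, n)$-truncated statement is proved by an identical argument, whose key additional input is \Cref{prop:m-1-tilde-chains}: the $\widetilde{\cdot}$-functor takes $m$-truncated chains of pairs to $n$-truncated chains, so the trivializations of the $\widetilde{\cdot}$-side automatically factor through the torsor $\rmM^{\loc, \GL, (n)}$ (respectively $\rmM^{\loc, \GL, (1 \blank \rdt)}$ over $\Fp$ in the case $n = 1 \blank \rdt$), while the underlying action is through $L^{(m)} \GL((\Lambda_i)_i)$.

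Granted the quotient stack descriptions, the finite-presentation claim is immediate: $\rmM^{\loc, \GL, (n)}$ is a torsor under the smooth affine finite type group $L^{(n)} \GL((\Lambda_i)_i)$ (or $\GL((\Lambda_e)_e)$ for $n = 1 \blank \rdt$) over the finite type $p$-adic formal scheme $\rmM^{\loc, \GL}$, and the quotient by the smooth affine finite type group $L^{(m)} \GL((\Lambda_i)_i)$ is a $p$-adic formal algebraic stack of finite presentation (an algebraic stack of finite presentation over $\Spec(\Fp)$ when $n = 1 \blank \rdt$). For the smoothness of the transition morphism $\catchdisps_{h, J, d}^{(m', n')} \to \catchdisps_{h, J, d}^{(m, n)}$ I will factor it through $\catchdisps_{h, J, d}^{(m', n)}$ (which is legitimate since $m' \geq m \geq n + 1$) and handle each step separately: increasing $m$ gives a gerbe banded by the smooth kernel of $L^{(m')} \GL((\Lambda_i)_i) \to L^{(m)} \GL((\Lambda_i)_i)$ and is therefore smooth, while increasing $n$ comes from the smooth base change $\rmM^{\loc, \GL, (n')} \to \rmM^{\loc, \GL, (n)}$ (a torsor under the smooth kernel of the loop group truncation) followed by the same $L^{(m')}$-quotient, hence is also smooth. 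The main point of bookkeeping in this plan is checking that these reductions of the local-model torsors correspond at the level of displays to the truncation functors between the stacks $\catchdisps^{(m, n)}$, which follows by directly unwinding the construction in \Cref{def:m-loc-gl-+}.
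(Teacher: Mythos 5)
Your proposal is correct and follows essentially the same route as the paper: the key point in both is that $\rmM^{\loc, \GL, +}$ (resp.\ $\rmM^{\loc, \GL, (n)}$) identifies with the space of chains of displays equipped with a trivialization of the underlying chain, so that the diagonal quotient exactly forgets that trivialization; your version just spells out the fppf-descent bookkeeping and the independence of the chosen trivialization under the diagonal action, which the paper leaves implicit. Your treatment of the "in particular" clause (factoring the transition map through $\catchdisps_{h, J, d}^{(m', n)}$ and using that the relevant kernels are smooth) is a correct elaboration of what the paper dismisses with "the claim now readily follows."
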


\begin{proof}
    By reformulating the definition of $\rmM^{\loc, \GL, +}$, we arrive at a natural identification
    \[
        \rmM^{\loc, \GL, +}(R) \cong \left\{\roundbr*{(M_{i, 1})_i, (\Psi_i)_i}\;\Bigg\vert\;\begin{gathered} \text{$\roundbr*{(\Lambda_{i, W(R)})_i, \left(\rho_{i, j}\right)_{i, j}, (\theta_i)_i, (M_{i, 1})_i, (\Psi_i)_i}$} \\ \text{is a chain of displays over $R$} \end{gathered}\right\}, 
    \]
    and under this identification the action of $L^+ \GL((\Lambda_i)_i) \times L^+ \GL((\Lambda_i)_i)$ is given by
    \[
        (k_1, k_2) . \roundbr*{(M_{i, 1})_i, (\Psi_i)_i} = \roundbr*{k_2 \cdot (M_{i, 1})_i, k_1 \cdot (\Psi_i)_i \cdot \widetilde{k_2}^{-1}},
    \]
    where we view $k_2$ as an isomorphism of chains of pairs
    \[
        k_2 \colon \roundbr*{\left(\Lambda_{i, W(R)}\right)_i, \left(\rho_{i, j}\right)_{i, j}, (\theta_i)_i, (M_{i, 1})_i} \lra \roundbr*{\left(\Lambda_{i, W(R)}\right)_i, \left(\rho_{i, j}\right)_{i, j}, (\theta_i)_i, k_2 \cdot (M_{i, 1})_i}
    \]
    in order to make sense of $\widetilde{k_2}$.
    From this the claim now readily follows.
\end{proof}

\begin{remark}[Relation with local shtukas] \label{rmk:relation-shtukas}
    Let us recall the notions of \emph{$($restricted$)$ local shtukas in mixed characteristic} from \cite{xiao-zhu};  see also \cite[Section 4]{shen-yu-zhang}.
    We warn the reader that our normalizations are slightly different from the ones in the references.

    Fix a connected reductive group $G$ over $\Qp$ and a parahoric $\Zp$-group scheme $\calG$ with generic fiber $G$.
    We write $\bbL^+ \calG$ for the perfection of the special fiber of $L^+ \calG$ and use similar notation for the truncated positive loop groups.
    We also define the \emph{$($Witt vector$)$ loop group} $\bbL G$ as the functor
    \[
        \bbL G \colon \curlybr*{\text{perfect rings of characteristic $p$}} \lra \curlybr*{\text{groups}}, \quad R \longmapsto G \roundbr*{W(R)[1/p]}.
    \]
    We then have the \emph{affine flag variety}
    \[
        \Fl_{\calG} \coloneqq \bbL^+ \calG \backslash \bbL G, 
    \]
    that is an ind-projective ind-perfect scheme over $\Fp$ by \cite[Corollary 9.6]{bhatt-scholze}.
    We equip $\bbL G$ with the action
    \[
        \roundbr*{\bbL^+ \calG \times \bbL^+ \calG} \times \bbL G \lra \bbL G, \quad \roundbr*{(k_1, k_2), g} \longmapsto k_1 g \sigma(k_2)^{-1}
    \]
    and $\Fl_{\calG}$ with the corresponding action
    \[
        \bbL^+ \calG \times \Fl_{\calG} \lra \Fl_{\calG}, \quad \roundbr*{k, \bbL^+ \calG \cdot g} \longmapsto \bbL^+ \calG \cdot g \sigma(k)^{-1}.
    \]
    This makes the projection $\bbL G \to \Fl_{\calG}$ into an $\bbL^+ \calG$-equivariant $\bbL^+ \calG$-torsor.

    Let us also fix a minuscule conjugacy class $\mu$ of cocharacters of $G_{\Qpbar}$ (where $\Qpbar$ is a fixed algebraic closure of $\Qp$) and assume for simplicity that it is defined over $\Qp$.
    Associated to $\mu$ we then have the \emph{admissible locus} $\calA_{\calG, \mu} \subseteq \Fl_{\calG}$ that is the descent to $\Fp$ of the closed perfect subscheme
    \[
        \bigcup_{w \in \Adm(\mu)_{\calG}} \Fl_{\calG, w} \subseteq \Fl_{\calG, \Fpbar},
    \]
    where $\Adm(\mu)_{\calG}$ denotes the $\mu$-admissible set for $\calG$, see \cite[Section 2]{he-rapoport}, and $\Fl_{\calG, w} \subseteq \Fl_{\calG, \Fpbar}$ is the $\bbL^+ \calG$-orbit corresponding to $w$.
    Write
    \[
        \calA_{\calG, \mu}^+ \subseteq \bbL G
    \]
    for the preimage of $\calA_{\calG, \mu}$ under the projection $\bbL G \to \Fl_{\calG}$, and write
    \[
        \calA_{\calG, \mu}^{(n)} \lra \calA_{\calG, \mu}
    \]
    for the reduction of $\calA_{\calG, \mu}^+$ to an $\bbL^{(n)} \calG$-torsor, where we formally set $\bbL^{(1 \blank \rdt)} \calG$ to be the perfection of the reductive quotient $\calG_{\Fp}^{\rdt}$.
    By \cite[proof of Theorem 3.16]{p-adic-local-models} the action of $\bbL^+ \calG$ on $\calA_{\calG, \mu}$ factors through $\calG_{\Fp}^{\pf}$, and the argument in \cite[proof of Lemma 4.4.2]{shen-yu-zhang} then shows that the $\bbL^+ \calG$-equivariant structure on $\calA_{\calG, \mu}^{(n)} \to \calA_{\calG, \mu}$ factors through $\bbL^{(m)} \calG$.

    The stack of \emph{local shtukas for $(\calG, \mu)$} is then defined to be
    \[
        \Sht_{\calG, \mu}^{\loc} \coloneqq \squarebr*{\roundbr*{\bbL^+ \calG}_{\Delta} \backslash \calA_{\calG, \mu}^+}, 
    \]
    and the stack of \emph{$(m, n)$-restricted local shtukas for $(\calG, \mu)$} is defined similarly as
    \[
        \Sht_{\calG, \mu}^{\loc, (m, n)} \coloneqq \squarebr*{\roundbr*{\bbL^{(m)} \calG}_{\Delta} \backslash \calA_{\calG, \mu}^{(n)}}.
    \]

    Now let us specialize to the situation where $\calG = \GL((\Lambda_i)_i)$ and $\mu = \mu_d$ is the conjugacy class of those cocharacters of $\GL(V)_{\Qpbar}$ that induce a weight decomposition $V_{\Qpbar} = V_1 \oplus V_0$ with $V_1$ of dimension $d$.
    Then we have the explicit description
    \[
        \calA_{\GL((\Lambda_i)_i), \mu_d}^+(R) = \left\{g \in \GL(V)\roundbr*{W(R)[1/p]}\;\Bigg\vert\;\begin{gathered} \text{$p \Lambda_{i, W(R)} \subseteq g \Lambda_{i, W(R)} \subseteq \Lambda_{i, W(R)}$ and $\Lambda_{i, W(R)}/g \Lambda_{i, W(R)}$} \\ \text{is a finite projective $R$-module of rank $d$ for all $i \in J$} \end{gathered}\right\};
    \]
    this follows from the equality between the $\mu$-admissible and the $\mu$-permissible set \cite[Theorem 3.5]{kottwitz-rapoport}; see also \cite[Section 4.3]{goertz-flatness-local-models-linear}.
    Using the description of $\rmM^{\loc, \GL, +}$ from the proof of \Cref{lem:disps-quotient-stack} and applying \Cref{rmk:relation-dieudonne}, we obtain an $(\bbL^+ \GL((\Lambda_i)_i) \times \bbL^+ \GL((\Lambda_i)_i))$-equivariant isomorphism
    \[
        \roundbr*{\rmM^{\loc, \GL, +}}_{\Fp}^{\pf} \lra \calA_{\GL((\Lambda_i)_i), \mu_d}^+, \quad \roundbr*{(M_{i, 1})_i, (\Psi_i)_i} \longmapsto F_M,
    \]
    where $F_M \colon W(R)[1/p] \otimes_{\Qp} V \cong (W(R)[1/p] \otimes_{\Qp} V)^{\sigma} \to W(R)[1/p] \otimes_{\Qp} V$ is the Frobenius of the chain of displays $M = ((\Lambda_{i, W(R)})_i, (\rho_{i, j})_{i, j}, (\theta_i)_i, (M_{i, 1})_i, (\Psi_i)_i)$.
    Consequently, we obtain equivalences
    \[
        \roundbr*{\catchdisps_{h, J, d}}_{\Fp}^{\pf} \lra \Sht_{\GL((\Lambda_i)_i), \mu_d}^{\loc}
        \quad \text{and} \quad
        \roundbr*{\catchdisps_{h, J, d}^{(m, n)}}_{\Fp}^{\pf} \lra \Sht_{\GL((\Lambda_i)_i), \mu_d}^{\loc, (m, n)}
    \]
    between the perfected special fibers of the stacks of chains of displays and the stacks of local shtukas for $(\GL((\Lambda_i)_i), \mu_d)$.
\end{remark}

\begin{remark} \label{rmk:chains-lau-zink}
    One could also define a stack
    \[
        \catchdisps_{h, J, d}^{\LZ, (n)} 
    \]
    of \emph{$n$-truncated chains of Lau--Zink-displays over $R$} (see \Cref{rmk:relation-lau-zink}) in the straightforward way.
    This is done in \cite{hesse} for $n = 1$.

    However, this definition is pathological away from the hyperspecial case $J = r + h\ZZ$, even when restricting to perfect rings of characteristic $p$.
    The truncation morphism
    \[
        \catchdisps_{h, J, d} \lra \catchdisps_{h, J, d}^{\LZ, (n)}
    \]
    is not surjective, and its topological image is not even locally closed (in particular, \cite[Lemma 2.51]{hesse} is false) as the following example shows.

    Let $(h, J, d) = (2, \ZZ, 1)$ and define an $n$-truncated chain $M = ((M_i, M_{i, 1}, \Psi_i)_i, (\rho_{i, j})_{i, j}, (\theta_i)_i)$ of Lau--Zink-displays over $R = \Fp[x^{1/p^{\infty}}, y^{1/p^{\infty}}]$ as follows: 
    \begin{itemize}
        \item
        $(M_i, M_{i, 1}) \coloneqq (M, M_1)$ is the standard $n$-truncated Lau--Zink-pair of type $(2, 1)$ with fixed normal decomposition $(L, T) = (W_n(R), W_n(R))$.

        \item
        $\theta_i \coloneqq \id_{(M, M_1)}$.

        \item
        \[
            \rho_{0, 1} \coloneqq
            \begin{pmatrix}
                [x] & 1 \\ p + [y] \cdot p^n & 0
            \end{pmatrix}, \;
            \rho_{1, 2} \coloneqq
            \begin{pmatrix}
                0 & 1 \\ p + [y] \cdot p^n & -[x]
            \end{pmatrix}
            \in \End_R\roundbr*{(M, M_1)} \cong
            \begin{pmatrix}
                W_n(R) & W_n(R) \\ I_{n + 1, R} & W_n(R)
            \end{pmatrix}.
        \]
        This uniquely determines $\rho_{i, j}$ for all $i, j$.

        \item
        One can now check  that both $((\widetilde{M_{i, 1}})_i, (\widetilde{\rho_{i, j}})_{i, j}, (\widetilde{\theta_i})_i)$ and $((M_i)_i, (\rho_{i, j}), (\theta_i)_i)$ are trivial $n$-truncated chains over $R$.
        We let $(\Psi_i)_i$ be an arbitrary isomorphism between them.
    \end{itemize}
    We now claim that the set-theoretic locus $Z \subseteq \abs*{\Spec(R)}$ where $M$ lifts to a chain of displays is given by the constructible subset
    \[
        Z = D(x) \cup V(y)
    \]
    that is not locally closed.

    From the smoothness of $\GL((\Lambda_i)_i)$, it follows that an $n$-truncated chain of Lau--Zink-displays lifts to a chain of displays if and only if its underlying $n$-truncated chain of Lau--Zink-pairs lifts.
    \begin{itemize}
        \item
        Consider the $n$-truncated chain of Lau--Zink pairs $M' \coloneqq ((M_i, M_{i, 1})_i, (\rho'_{i, j})_{i, j}, (\theta_i)_i)$ over $\Fp[x^{1/p^{\infty}}]$, where the $\rho_{i, j}'$ are defined by setting
        \[
            \rho'_{0, 1} \coloneqq
            \begin{pmatrix}
                [x] & 1 \\ p & 0
            \end{pmatrix}
            \quad \text{and} \quad
            \rho'_{1, 2} \coloneqq
            \begin{pmatrix}
                0 & 1 \\ p & -[x]
            \end{pmatrix}.
        \]
        Then $M'$ lifts to a chain of pairs that is defined by the same expression.

        Now $M$ (or rather its underlying $n$-truncated chain of Lau--Zink-pairs) clearly becomes isomorphic to $M'$ after base changing along $R \to (R/yR)^{\pf} \cong \Fp[x^{1/p^{\infty}}]$, so  we have $V(y) \subseteq Z$.
        Moreover, we also have an isomorphism $(\alpha_i)_i \colon M'_{R[x^{-1}]} \to M_{R[x^{-1}]}$ that is defined by
        \[
            \alpha_0 \coloneqq
            \begin{pmatrix}
                1 & 0 \\ 0 & 1
            \end{pmatrix}
            \quad \text{and} \quad
            \alpha_1 \coloneqq
            \begin{pmatrix}
                1 & 0 \\ [x^{-1} y] \cdot p^n & 1
            \end{pmatrix},
        \]
        so  we also have $D(x) \subseteq Z$.

        \item
        Now let $R \to k$ be a ring homomorphism from $R$ into a perfect field $k$ such that the image of $x$ in $k$ vanishes while the image of $y$ in $k$ is non-zero, and suppose that $M_k$ lifts to a chain of pairs $M^{\lift}$.
        After compatibly lifting normal decompositions, we obtain that
        $M^{\lift}$ is given by
        \[
            \rho^{\lift}_{0, 1} \coloneqq
            \begin{pmatrix}
                p^n \cdot a & 1 + p^n \cdot b \\ p + [y] \cdot p^n + p^{n + 1} \cdot c & p^n \cdot d
            \end{pmatrix}
            \quad \text{and} \quad
            \rho^{\lift}_{1, 2} \coloneqq
            \begin{pmatrix}
                p^n \cdot e & 1 + p^n \cdot f \\ p + [y] \cdot p^n + p^{n + 1} \cdot g & p^n \cdot h
            \end{pmatrix}
        \]
        for some $a, \dotsc, h \in W(k)$.
        But then the upper-left entry of the matrix representing $\rho^{\lift}_{1, 2} \circ \rho^{\lift}_{0, 1}$ is given by
        \[
        (p^n \cdot e) \cdot (p^n \cdot a) + (1 + p^n \cdot f) \cdot \roundbr*{p + [y] \cdot p^n + p^{n + 1} \cdot c} \equiv p + [y] \cdot p^n \quad \mod p^{n + 1}.
        \]
        This gives a contradiction as $\rho^{\lift}_{1, 2} \circ \rho^{\lift}_{0, 1}$ really should be multiplication by $p$.
    \end{itemize}
    This proves the claim.
\end{remark}

Let us now come back to the situation of \Cref{subsec:polarized-chains},
where $h = 2g$ is even, $d = g$ and $-J = J$.
Fix a polarization $\lambda \colon V \to V^{\vee}$ that restricts to isomorphisms $\lambda_i \colon \Lambda_i \to \Lambda_{- i}^{\vee}$ for all $i \in J$.
We also set $\Gamma \coloneqq \Zp$ and sometimes think of $\lambda$ as a homogeneous polarization $\lambda \colon V \to \Gamma[1/p] \otimes_{\Qp} V^{\vee}$ that restricts to isomorphisms $\lambda_i \colon \Lambda_i \to \Gamma \otimes_{\Zp} \Lambda_{- i}^{\vee}$.

We then again have associated parahoric $\Zp$-group schemes
\[
    \Sp \roundbr*{(\Lambda_i)_i} \coloneqq \Aut \roundbr*{(\Lambda_i)_i, \left(\rho_{i, j}\right)_{i, j}, (\theta_i)_i, (\lambda_i)_i}
\]
and
\[
    \GSp \roundbr*{(\Lambda_i)_i} \coloneqq \Aut \roundbr*{(\Lambda_i)_i, \left(\rho_{i, j}\right)_{i, j}, (\theta_i)_i, \Gamma, (\lambda_i)_i}
\]
with generic fibers $\Sp((\Lambda_i)_i)_{\Qp} = \Sp(V)$ and $\GSp((\Lambda_i)_i)_{\Qp} = \GSp(V)$ that fit into a short exact sequence
\[
    1 \lra \Sp \roundbr*{(\Lambda_i)_i} \lra \GSp \roundbr*{(\Lambda_i)_i} \lra \Gm \lra 1,
\]
where the second arrow is given by sending a symplectic similitude to the similitude factor.
We also have the \emph{local model}
\[
    \bbM^{\loc, \Sp} = \bbM^{\loc, \GSp} \colon R \longmapsto \left\{(C_i)_i \in \bbM^{\loc, \GL}(R)\;\Big\vert\;\lambda_i(C_i) = C_{-i}^{\perp}\right\}
\]
that is a closed subscheme of $\bbM^{\loc, \GL}$ stable under the action of $\GSp((\Lambda_i)_i) \subseteq \GL((\Lambda_i)_i)$ whose generic fiber is the Grassmannian of Lagrangian subspaces of $V$ (see \cite[Theorem 3.16 and Definition 3.27]{rapoport-zink-96}).

Again, by \cite[Theorem 2.1]{goertz-flatness-local-models-symplectic}, $\bbM^{\loc, \Sp} = \bbM^{\loc, \GSp}$ is flat over $\Zp$, its special fiber is reduced, and the irreducible components of its special fiber are normal with rational singularities.
We write $\rmM^{\loc, \Sp} = \rmM^{\loc, \GSp}$ for the $p$-completion of $\bbM^{\loc, \Sp} = \bbM^{\loc, \GSp}$.

The reductive quotients $\Sp((\Lambda_i)_i)_{\Fp}^{\rdt}$ and $\GSp((\Lambda_i)_i)_{\Fp}^{\rdt}$ of the special fibers of $\Sp((\Lambda_i)_i)$ and $\GSp((\Lambda_i)_i)$ identify with
\[
    \Sp \roundbr*{(\Lambda_e)_e} \coloneqq \Aut \roundbr*{(\Lambda_e)_e, (\theta_e)_e, (\lambda_e)_e}
    \quad \text{and} \quad
    \GSp \roundbr*{(\Lambda_e)_e} \coloneqq \Aut \roundbr*{(\Lambda_e)_e, (\theta_e)_e, \Gamma/p\Gamma, (\lambda_e)_e},
\]
where $\lambda_e \colon \Lambda_e \to \Lambda_{-e}^{\vee} \cong (\Gamma/p \Gamma) \otimes_{\Fp} \Lambda_{-e}^{\vee}$ is the isomorphism induced by $(\lambda_i)_i$.
These groups have the following explicit description: 
\begin{itemize}
    \item
    Suppose that $0, g \in J$.
    Then we have
    \[
        \Sp\roundbr*{(\Lambda_e)_e} = \prod_{e \in \calE/(h\ZZ, \pm)} \GL(\Lambda_e),
        \quad
        \GSp \roundbr*{(\Lambda_e)_e} = \Gm \times \prod_{e \in \calE/(h\ZZ, \pm)} \GL(\Lambda_e).
    \]

    \item
    Suppose that $0 \notin J$ and $g \in J$, and write $e_0 \in \calE$ for the edge $(i, j)$ with $i < 0 < j$.
    Then we have
    \[
        \Sp \roundbr*{(\Lambda_e)_e} = \Sp\left(\Lambda_{e_0}\right) \times \prod_{e \in (\calE/(h \ZZ, \pm)) \setminus \curlybr{e_0}} \GL(\Lambda_e),
        \quad
        \GSp \roundbr*{(\Lambda_e)_e} = \GSp\left(\Lambda_{e_0}\right) \times \prod_{e \in (\calE/(h \ZZ, \pm)) \setminus \curlybr{e_0}} \GL(\Lambda_e).
    \]

    \item
    Suppose that $0 \in J$ and $g \notin J$, and write $e_g \in \calE$ for the edge $(i, j)$ with $i < g < j$.
    Then we similarly have
    \[
        \Sp \roundbr*{(\Lambda_e)_e} = \Sp(\Lambda_{e_g}) \times \prod_{e \in (\calE/(h \ZZ, \pm)) \setminus \curlybr{e_g}} \GL(\Lambda_e),
        \quad
        \GSp \roundbr*{(\Lambda_e)_e} = \GSp(\Lambda_{e_g}) \times \prod_{e \in (\calE/(h \ZZ, \pm)) \setminus \curlybr{e_g}} \GL(\Lambda_e).
    \]

    \item
    Suppose that $0, g \notin J$, and write $e_0, e_g \in \calE$ as before.
    Then we have
    \[
        \Sp \roundbr*{(\Lambda_e)_e} = \Sp\left(\Lambda_{e_0}\right) \times \Sp\left(\Lambda_{e_g}\right) \times \prod_{e \in (\calE/(h \ZZ, \pm)) \setminus \curlybr{e_0, e_g}} \GL(\Lambda_e)
    \]
    and
    \[
        \GSp \roundbr*{(\Lambda_e)_e} = \GSp\left(\Lambda_{e_0}, \Lambda_{e_g}\right) \times \prod_{e \in (\calE/(h \ZZ, \pm)) \setminus \curlybr{e_0, e_g}} \GL(\Lambda_e),
    \]
    where $\GSp(\Lambda_{e_0}, \Lambda_{e_g}) = \GSp(\Lambda_{e_0}) \times_{\Gm} \GSp(\Lambda_{e_g})$ denotes the group of tuples of symplectic similitudes of $\Lambda_{e_0}$ and $\Lambda_{e_g}$ with the same similitude factor.
\end{itemize}

\begin{lemma} \label{lem:polarized-chains-torsors}
    We have natural equivalences of groupoids
    \[
        \curlybr*{\,\text{polarized chains over $R$}}\lra \curlybr*{\text{$L^+ \Sp \roundbr*{(\Lambda_i)_i}$-torsors over $R$}}
    \]
    and
    \[
        \curlybr*{\text{$n$-truncated polarized chains over $R$}} \lra \curlybr*{\text{$L^{(n)} \Sp \roundbr*{(\Lambda_i)_i}$-torsors over $R$}}
    \]
    for $n \neq 1 \blank \rdt$.
    When $R$ is of characteristic $p$, we also have a natural equivalence of groupoids
    \[
        \curlybr*{\text{$(1 \blank \rdt)$-truncated polarized chains over $R$}} \lra \curlybr*{\text{$\Sp \roundbr*{(\Lambda_e)_e}$-torsors over $R$}}.
    \]

    Similarly, we also have a natural equivalence between homogeneously polarized chains over $R$ and $L^+ \GSp((\Lambda_i)_i)$-torsors over $R$ and truncated variants.
\end{lemma}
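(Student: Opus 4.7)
The plan is to mirror the argument of Lemma~\ref{lem:chains-torsors}, carrying the polarization along. Given a polarized chain $M = ((M_i)_i, (\rho_{i,j})_{i,j}, (\theta_i)_i, (\lambda_i)_i)$ over $R$, I would associate to it the fpqc-sheaf $\underline{\mathrm{Isom}}(M, M_0)$ of isomorphisms to the standard polarized chain $M_0 = ((\Lambda_{i,W(-)})_i, (\rho_{i,j})_{i,j}, (\theta_i)_i, (\lambda_i)_i)$. Since $\Sp((\Lambda_i)_i)$ is by definition the automorphism group scheme of $M_0$, this sheaf is automatically a right $L^+ \Sp((\Lambda_i)_i)$-pseudotorsor, and full faithfulness of the assignment $M \mapsto \underline{\mathrm{Isom}}(M, M_0)$ is formal: morphisms of polarized chains descend uniquely from any cover on which both source and target are trivialized. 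The work is therefore in essential surjectivity, i.e.\ in showing that every polarized chain is fpqc-locally isomorphic to $M_0$.

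For this, I would first apply Lemma~\ref{lem:chains-torsors} to the underlying chain to pass to a faithfully flat cover $R \to R'$ on which the underlying chain becomes trivialized. Under this trivialization the polarization transports to an antisymmetric isomorphism $(\lambda_i')_i$ from the standard chain to its dual, and the task reduces to showing that $(\lambda_i')_i$ and the standard $(\lambda_i)_i$ lie in the same $L^+\GL((\Lambda_i)_i)(R')$-orbit after a further fpqc cover. The scheme of antisymmetric isomorphisms of the standard chain with its dual is representable by a smooth $\Zp$-scheme $\mathcal{X}$ carrying an action of $L^+\GL((\Lambda_i)_i)$ whose stabilizer at $(\lambda_i)_i$ is precisely $L^+\Sp((\Lambda_i)_i)$. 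The required transitivity then follows because $\GL((\Lambda_i)_i)/\Sp((\Lambda_i)_i)$ is smooth over $\Zp$ and $L^+$ applied to a smooth $\Zp$-scheme preserves smooth (and in particular fpqc) surjections by Greenberg-style lifting of points across the successive Witt-vector truncations.

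The $n$-truncated case for $n \neq 1\blank\rdt$ goes through verbatim, replacing $W$ by $W_n$ and using that smoothness of $\Sp((\Lambda_i)_i)$ ensures $L^{(n)}\Sp((\Lambda_i)_i)$ is still the automorphism group scheme of the $n$-truncated standard polarized chain. In the case $n = 1\blank\rdt$ over characteristic $p$, a $1\blank\rdt$-truncated polarized chain is by construction a tuple $((N_e)_e, (\theta_e)_e)$ together with the induced isomorphisms $N_e \to N_{-e}^{\vee}$ coming from the coherence datum before Definition~\ref{def:polarized-chains}, and the automorphism group of the standard such object is $\Sp((\Lambda_e)_e)$ by definition, so the same orbit argument applies with $\Sp((\Lambda_e)_e)$ in place of $L^+\Sp((\Lambda_i)_i)$. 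The homogeneously polarized statements follow by the identical recipe with $\Sp$ replaced by $\GSp$; here I would additionally trivialize the invertible module $I$ (and the Frobenius-twist $\iota$ in the non-reductive-quotient cases) fpqc-locally before running the orbit comparison. The main obstacle throughout is precisely this last step — verifying fpqc-local transitivity of the positive loop group on the scheme of polarizations — which rests on smoothness of $\GL/\Sp$ (resp.\ $\GL/\GSp$) together with compatibility of the Witt-vector positive loop functor with smooth surjections; the $1\blank\rdt$ case is actually cleaner since it reduces to the classical reductive group $\Sp((\Lambda_e)_e)$ acting on a scheme over $\Fp$.
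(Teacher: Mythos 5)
Your overall strategy is sound and amounts to unwinding what the paper outsources to citations: the paper's own proof is a two-line appeal to \cite[Theorem 3.16]{rapoport-zink-96} (local triviality of polarized chains of modules and smoothness of their automorphism group schemes) together with \cite[Lemma 2.12]{bueltel-hedayatzadeh} (to pass from lattice-chain statements over $W(R)$ to torsors under the Witt-vector positive loop group), plus the observation that the homogeneously polarized case reduces to the polarized case after trivializing $I$ fpqc-locally --- which matches your treatment of $I$. So the real question is whether your replacement for the Rapoport--Zink input is complete.

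It is not quite. The load-bearing step in your sketch is the identification of the scheme $\mathcal{X}$ of antisymmetric isomorphisms from the standard chain to its dual with the homogeneous space $\GL((\Lambda_i)_i)/\Sp((\Lambda_i)_i)$. Smoothness of that quotient (which does hold, both groups being smooth) only shows that the orbit of the standard polarization is smooth, hence open in its closure inside $\mathcal{X}$; it does not show that the orbit map $\GL((\Lambda_i)_i) \to \mathcal{X}$ is surjective. What is actually needed is that over every algebraically closed field, any antisymmetric isomorphism $(\lambda'_i)_i$ from $(\Lambda_i)_i$ to its dual compatible with the $\rho_{i,j}$ and $\theta_i$ is $\GL((\Lambda_i)_i)$-conjugate to the standard one --- a symplectic elementary-divisor statement for lattice chains, which is precisely the nontrivial content of the cited theorem and is supplied neither by smoothness of $\GL/\Sp$ nor by Greenberg lifting. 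Once this pointwise transitivity is granted (together with formal smoothness of the $\underline{\mathrm{Isom}}$-functor, which does follow from smoothness of $\Sp((\Lambda_i)_i)$ by the usual deformation argument), the remainder of your argument --- full faithfulness of $M \mapsto \underline{\mathrm{Isom}}(M, M_0)$, compatibility of $L^+$ and $L^{(n)}$ with smooth surjections, the $1 \blank \rdt$ case via the reductive group $\Sp((\Lambda_e)_e)$, and the homogeneous case --- goes through as described. One small slip: for homogeneously polarized \emph{chains}, as opposed to chains of displays, there is no twisting datum $\iota$ to trivialize; only the invertible module $I$ appears.
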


\begin{proof}
    The claims for (truncated) polarized chains follow from \cite[Theorem 3.16]{rapoport-zink-96} and \cite[Lemma~2.12]{bueltel-hedayatzadeh}.

    To prove the claim for homogeneously polarized chains, we need to see that every such homogeneously polarized chain $((M_i)_i, (\rho_{i, j})_{i, j}, (\theta_i)_i, I, (\lambda_i)_i)$ over $R$ can be trivialized fpqc-locally on $\Spec(R)$.
    We can certainly find a local trivialization of $I$ so that the homogeneously polarized chain lifts to a polarized chain that can be trivialized locally by the first part.
\end{proof}

\begin{lemma} \label{lem:polarized-pairs-quotient-stack}
    We have natural equivalences of groupoids
    \[
        \curlybr*{\,\text{polarized chains of pairs over $R$}} \lra \squarebr*{L^+ \Sp \roundbr*{(\Lambda_i)_i} \backslash \rmM^{\loc, \Sp}}(R) 
    \]
    and
    \[
        \curlybr*{\text{$m$-truncated polarized chains of pairs over $R$}} \lra \squarebr*{L^{(m)} \Sp \roundbr*{(\Lambda_i)_i} \backslash \rmM^{\loc, \Sp}}(R).
    \]
    
    We also have analogous equivalences for $(m$-truncated\,$)$ homogeneously polarized chains of pairs.
\end{lemma}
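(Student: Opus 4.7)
The plan is to replay the argument of \Cref{lem:pairs-quotient-stack} with $\GL\roundbr[\big]{(\Lambda_i)_i}$ replaced first by $\Sp\roundbr[\big]{(\Lambda_i)_i}$ and then by $\GSp\roundbr[\big]{(\Lambda_i)_i}$, using that $\rmM^{\loc, \Sp} \subseteq \rmM^{\loc, \GL}$ is a closed subscheme cut out by an extra Lagrangian condition and that $\rmM^{\loc, \Sp} = \rmM^{\loc, \GSp}$.

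First I unfold the quotient stack: an object of $\squarebr[\big]{L^+ \Sp((\Lambda_i)_i) \backslash \rmM^{\loc, \Sp}}(R)$ is a pair $(\calP, q)$ with $\calP$ an $L^+ \Sp((\Lambda_i)_i)$-torsor over $R$ and $q \colon \calP_R \to \rmM^{\loc, \Sp}$ an equivariant morphism. By \Cref{lem:polarized-chains-torsors} the torsor $\calP$ corresponds to a polarized chain $((M_i)_i, (\rho_{i,j})_{i,j}, (\theta_i)_i, (\lambda_i)_i)$ over $R$, and by fpqc descent the equivariant morphism $q$ corresponds to a tuple $(C_i)_i$ of direct summands $C_i \subseteq M_i/I_R M_i$ of rank $g$ with $\rho_{i,j}(C_i) \subseteq C_j$, $\theta_i(C_i) = C_{i+h}$, and $\lambda_i(C_i) = C_{-i}^{\perp}$ under the induced isomorphism $\lambda_i \colon M_i/I_R M_i \to (M_{-i}/I_R M_{-i})^{\vee}$.

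Next I translate the tuple $(C_i)_i$ into a tuple $(M_{i,1})_i$ of $W(R)$-submodules $M_{i,1} \subseteq M_i$ with $I_R M_i \subseteq M_{i,1}$ and $M_{i,1}/I_R M_i = C_i$, and observe that the compatibilities with $\rho_{i,j}$ and $\theta_i$ make $((M_i)_i, (\rho_{i,j})_{i,j}, (\theta_i)_i, (M_{i,1})_i)$ a chain of pairs. The key point to verify is that the Lagrangian condition $\lambda_i(C_i) = C_{-i}^{\perp}$ is equivalent to the assertion that $(\lambda_i)_i$ extends to a morphism of chains of pairs $((M_i, M_{i,1}))_i \to ((M_{-i}, M_{-i,1})^{\vee})_i$; this is essentially the identity $M_{-i,1}^{\ast}/I_R M_{-i}^{\vee} = (M_{-i,1}/I_R M_{-i})^{\perp} \subseteq (M_{-i}/I_R M_{-i})^{\vee}$ from the definition of the dual of a pair, and once one has this the antisymmetry of $(\lambda_i)_i$ is automatic from its antisymmetry on the chain level. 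This yields the desired equivalence of groupoids, and isomorphisms correspond correctly because both sides are defined by the same automorphism condition.

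The $m$-truncated case is verbatim the same using the $L^{(m)}$-version of \Cref{lem:polarized-chains-torsors}, noting that a direct summand in $M_i/I_{m,R}M_i$ still lifts to a submodule of $M_i$ containing $I_{m,R} M_i$ because $W_m(R)$ is Henselian along $I_{m,R}$. For the homogeneously polarized statement I repeat the same argument using $\GSp$ in place of $\Sp$, invoking the equality $\rmM^{\loc, \Sp} = \rmM^{\loc, \GSp}$ so that the same local model parametrizes the Lagrangian condition; the extra data of the invertible module $I$ and the isomorphism compatibility is tracked by the character $\GSp((\Lambda_i)_i) \to \Gm$. The only genuine content is the translation of the Lagrangian perpendicularity condition into the compatibility of the polarization with submodules in the pair, which I view as the main (but routine) obstacle.
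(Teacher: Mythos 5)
Your proof is correct and follows essentially the same route the paper intends: the paper's own proof is just the one-line remark that the statement ``follows in the same way as'' the $\GL$-case (\Cref{lem:pairs-quotient-stack}), and your write-up is precisely that argument carried out for $\Sp$ and $\GSp$, with the only genuinely new point being the translation of the Lagrangian condition $\lambda_i(C_i) = C_{-i}^{\perp}$ into the compatibility of $(\lambda_i)_i$ with the submodules $M_{i,1}$ via the definition of the dual pair — which you identify and handle correctly.
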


\begin{proof}
    This follows in the same way as \Cref{lem:pairs-quotient-stack}.
\end{proof}

\begin{definition} \label{def:m-loc-sp-+}
    Let
    \[
        \rmM^{\loc, \Sp, +} \lra \rmM^{\loc, \Sp}
    \]
    be the $L^+ \Sp((\Lambda_i)_i)$-equivariant $L^+ \Sp((\Lambda_i)_i)$-torsor corresponding to the functor from \Cref{rmk:m-1-tilde-polarized-chains} for polarized chains under the equivalences from \Cref{lem:polarized-chains-torsors,lem:polarized-pairs-quotient-stack} (as in \Cref{def:m-loc-gl-+}).

    Similarly, let
    \[
        \rmM^{\loc, \GSp, +} \lra \rmM^{\loc, \GSp}
    \]
    be the $L^+ \GSp((\Lambda_i)_i)$-equivariant $L^+ \GSp((\Lambda_i)_i)$-torsor corresponding to the analogous functor from \Cref{rmk:m-1-tilde-polarized-chains} for homogeneously polarized chains.
    Note that this agrees with the base change of the $L^+ \Sp((\Lambda_i)_i)$-torsor $\rmM^{\loc, \Sp, +} \to \rmM^{\loc, \Sp} = \rmM^{\loc, \GSp}$ along the morphism of $\Zp$-group schemes $L^+ \Sp((\Lambda_i)_i) \to L^+ \GSp((\Lambda_i)_i)$.

    We also use the notation $\rmM^{\loc, \Sp, (n)}$ and $\rmM^{\loc, \GSp, (n)}$ as in \Cref{def:m-loc-gl-+}.
\end{definition}

\begin{proposition} \label{lem:polarized-disps-quotient-stack}
    We have equivalences of stacks over $\Spf(\Zp)$
    \[
        \catpolchdisps_{g, J} \lra \squarebr*{\roundbr*{L^+ \Sp ((\Lambda_i)_i)}_{\Delta} \backslash \rmM^{\loc, \Sp, +}},
        \quad
        \catpolchdisps_{g, J}^{(m, n)} \lra \squarebr*{\roundbr*{L^{(m)} \Sp ((\Lambda_i)_i)}_{\Delta} \backslash \rmM^{\loc, \Sp, (n)}}
    \]
    and
    \[
        \cathpolchdisps_{g, J} \lra \squarebr*{\roundbr*{L^+ \GSp ((\Lambda_i)_i)}_{\Delta} \backslash \rmM^{\loc, \GSp, +}},
        \;\,
        \cathpolchdisps_{g, J}^{(m, n)} \lra \squarebr*{\roundbr*{L^{(m)} \GSp ((\Lambda_i)_i)}_{\Delta} \backslash \rmM^{\loc, \GSp, (n)}}.
    \]

    In particular, $\catpolchdisps_{g, J}^{(m, n)}$  and $\cathpolchdisps_{g, J}^{(m, n)}$ are $p$-adic formal algebraic stacks of finite presentation over $\Spf(\Zp)$ $($algebraic stacks of finite presentation over $\Spec(\Fp)$ when $n = 1 \blank \rdt)$, and for $(m, n) \leq (m', n')$ the morphisms $\catpolchdisps_{g, J}^{(m', n')} \to \catpolchdisps_{g, J}^{(m, n)}$ and $\cathpolchdisps_{g, J}^{(m', n')} \to \cathpolchdisps_{g, J}^{(m, n)}$ are smooth.
\end{proposition}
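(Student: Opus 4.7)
The plan is to adapt the argument of Proposition \ref{lem:disps-quotient-stack} to the polarized and homogeneously polarized setting, replacing $\GL((\Lambda_i)_i)$ throughout by $\Sp((\Lambda_i)_i)$ or $\GSp((\Lambda_i)_i)$. The two structural inputs are Remark \ref{rmk:m-1-tilde-polarized-chains}, which presents a (homogeneously) polarized chain of displays as a (homogeneously) polarized chain of pairs together with a compatible isomorphism $(\Psi_i)_i$ (and, in the homogeneous case, an additional isomorphism $\iota \colon I^{\sigma} \to I$), and the torsor/quotient-stack descriptions provided by Lemmas \ref{lem:polarized-chains-torsors} and \ref{lem:polarized-pairs-quotient-stack}.

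First I would reformulate Definition \ref{def:m-loc-sp-+} to obtain the natural identification
\[
    \rmM^{\loc, \Sp, +}(R) \cong \set[\Bigg]{\roundbr[\big]{(M_{i, 1})_i, (\Psi_i)_i}}{\begin{gathered} \text{$\roundbr[\big]{(\Lambda_{i, W(R)})_i, (\rho_{i, j})_{i, j}, (\theta_i)_i, (M_{i, 1})_i, (\lambda_i)_i, (\Psi_i)_i}$} \\ \text{is a polarized chain of displays over $R$} \end{gathered}}
\]
and the analogous identification for $\rmM^{\loc, \GSp, +}(R)$, where the additional datum $(I, \iota)$ of a homogeneous polarization enters as part of the homogeneous polarization data. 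Under these identifications the $L^+ \Sp((\Lambda_i)_i) \times L^+ \Sp((\Lambda_i)_i)$-action (and analogously the $\GSp$-action) is of exactly the same form as in the proof of Proposition \ref{lem:disps-quotient-stack}, namely
\[
    (k_1, k_2) . \roundbr[\big]{(M_{i, 1})_i, (\Psi_i)_i} = \roundbr[\Big]{k_2 \cdot (M_{i, 1})_i, k_1 \cdot (\Psi_i)_i \cdot \widetilde{k_2}^{-1}}.
\]
The compatibility of this action with the polarization datum is guaranteed by the fact that $(M, M_1) \mapsto \widetilde{M_1}$ is naturally compatible with dualities and twists (Lemmas \ref{lem:m-1-tilde-dual} and \ref{lem:m-1-tilde-twist}). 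Passing to the diagonal quotient and combining with Lemmas \ref{lem:polarized-chains-torsors} and \ref{lem:polarized-pairs-quotient-stack} yields the two stated equivalences in the non-truncated case. The $(m, n)$-truncated versions (including $n = 1 \blank \rdt$) follow by the same argument, with positive loop groups and their torsors replaced by their $(m, n)$-truncated counterparts.

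For the ``in particular'' part, the algebraicity of $\catpolchdisps_{g, J}^{(m, n)}$ and $\cathpolchdisps_{g, J}^{(m, n)}$ as $p$-adic formal algebraic stacks (or algebraic stacks over $\Spec(\Fp)$ when $n = 1 \blank \rdt$) of finite presentation is immediate from the quotient stack description together with the flatness and finite presentation of $\rmM^{\loc, \Sp} = \rmM^{\loc, \GSp}$ and the smoothness and finite presentation of the truncated loop groups $L^{(m)} \Sp((\Lambda_i)_i)$ and $L^{(m)} \GSp((\Lambda_i)_i)$. For the smoothness of the transition morphisms I would observe that, after pulling back along a smooth presentation by $\rmM^{\loc, \Sp, (n')}$ (resp.\ $\rmM^{\loc, \GSp, (n')}$), both $L^{(m')} \Sp((\Lambda_i)_i) \to L^{(m)} \Sp((\Lambda_i)_i)$ and $\rmM^{\loc, \Sp, (n')} \to \rmM^{\loc, \Sp, (n)}$ are torsors under smooth affine group schemes and hence smooth (and analogously for $\GSp$). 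I expect the argument to be essentially routine given the groundwork laid in Sections \ref{sec:pairs-disps} and \ref{sec:chains-pairs-disps}; the only mild bookkeeping subtlety is keeping track of the extra datum coming from the homogeneous polarization, but this is handled by exactly the base change step used in the proof of Lemma \ref{lem:polarized-chains-torsors}.
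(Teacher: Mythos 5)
Your proposal is correct and follows exactly the route the paper intends: the paper's own proof is simply the statement that the argument is analogous to Proposition \ref{lem:disps-quotient-stack}, and you have carried out that analogy faithfully — identifying $\rmM^{\loc, \Sp, +}(R)$ (resp.\ $\rmM^{\loc, \GSp, +}(R)$) with the set of data completing the standard trivialized (homogeneously) polarized chain of pairs to a (homogeneously) polarized chain of displays, with the same twisted $(k_1, k_2)$-action, and deducing the quotient description via \Cref{rmk:m-1-tilde-polarized-chains}, \Cref{lem:polarized-chains-torsors} and \Cref{lem:polarized-pairs-quotient-stack}. The handling of the \enquote{in particular} clause via smoothness of the truncated loop groups and of the torsors $\rmM^{\loc, \Sp, (n')} \to \rmM^{\loc, \Sp, (n)}$ is likewise the intended argument.
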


\begin{proof}
    The proof is analogous to that of \Cref{lem:disps-quotient-stack}.
\end{proof}

\begin{remark}[Relation with local shtukas, continued] \label{rmk:relation-shtukas-2}
    Similarly to what is  discussed in \Cref{rmk:relation-shtukas}, we have equivalences
    \[
        \roundbr*{\cathpolchdisps_{g, J}}_{\Fp}^{\pf} \lra \Sht_{\GSp((\Lambda_i)_i), \mu_g}^{\loc}
        \quad \text{and} \quad
        \roundbr*{\cathpolchdisps_{g, J}^{(m, n)}}_{\Fp}^{\pf} \lra \Sht_{\GSp((\Lambda_i)_i), \mu_g}^{\loc, (m, n)},
    \]
    where $\mu_g$ is the conjugacy class of those cocharacters of $\GSp(V)_{\Qpbar}$ that induce a weight decomposition $V_{\Qpbar} = V_1 \oplus V_0$ with $V_0, V_1 \subseteq V_{\Qpbar}$ Lagrangian.

    The non-homogeneously polarized case, however, does not quite fit into the group-theoretic framework of local shtukas.
    The problem is essentially that given a polarized chain of displays $M$ over $R$, the corresponding Frobenius $F_M \colon M[1/p]^{\sigma} \to M[1/p]$ is not a symplectic isomorphism; it only preserves the symplectic form up to the scalar $p$.
    The situation can be remedied as follows.

    The similitude factor morphism $\bbL \GSp(V) \to \bbL \Gm$ restricts to a faithfully flat morphism
    \[
        \calA_{\GSp((\Lambda_i)_i), \mu_g}^+ \lra p \cdot \bbL^+ \Gm, 
    \]
    and we define $\calA_{\Sp((\Lambda_i)_i), \mu_g}^+$ to be the fiber of this morphism over $p \in (p \cdot \bbL^+ \Gm)(\Fp)$.
    Then the isomorphism $(\rmM^{\loc, \GSp, +})_{\Fp}^{\pf} \to \calA_{\GSp((\Lambda_i)_i), \mu_g}^+$ restricts to an $(\bbL^+ \Sp((\Lambda_i)_i) \times \bbL^+ \Sp((\Lambda_i)_i))$-equivariant isomorphism
    \[
        \roundbr*{\rmM^{\loc, \Sp, +}}_{\Fp}^{\pf} \lra \calA_{\Sp((\Lambda_i)_i), \mu_g}^+, 
    \]
    so  we obtain equivalences
    \[
        \roundbr*{\catpolchdisps_{g, J}}_{\Fp}^{\pf} \lra \Sht_{\Sp((\Lambda_i)_i), \mu_g}^{\loc} \coloneqq \squarebr*{\roundbr*{\bbL^+ \Sp((\Lambda_i)_i)}_{\Delta} \backslash \calA_{\Sp((\Lambda_i)_i), \mu_g}^+}
    \]
    and
    \[
        \roundbr*{\catpolchdisps_{g, J}^{(m, n)}}_{\Fp}^{\pf} \lra \Sht_{\Sp((\Lambda_i)_i), \mu_g}^{\loc} \coloneqq \squarebr*{\roundbr*{\bbL^+ \Sp((\Lambda_i)_i)}_{\Delta} \backslash \calA_{\Sp((\Lambda_i)_i), \mu_g}^+}.
    \]
\end{remark}

\subsection{(Polarized) chains of $\boldsymbol{p}$-divisible groups}

Let us again first return to the situation of \Cref{subsec:chains-pairs-displays},
where $h$, $J$ and $d$ are arbitrary.

\begin{definition}
    A \emph{chain of $p$-divisible groups $($of type $(h, J, d))$ over $R$} is a diagram $((X_i)_i, (\rho_{i, j})_{i, j})$ of $p$-divisible groups of height $h$ and dimension $d$ over $R$ of shape $J^{\op}$ such that $\rho_{i, j} \colon X_j \to X_i$ is an isogeny of height $j - i$ and $\ker(\rho_{i, i + h}) = X_{i + h}[p]$.
    We write
    \[
        \catchpdiv_{h, J, d}
    \]
    for the stack over $\Spf(\Zp)$ of chains of $p$-divisible groups.

    In the situation of \Cref{subsec:polarized-chains}, we equip the category of chains of $p$-divisible groups over $R$ with a duality by setting
    \[
        \roundbr*{(X_i)_i, \left(\rho_{i, j}\right)_{i, j}}^{\vee} \coloneqq \roundbr*{\left(X_{- i}^{\vee}\right)_i, \left(\rho_{-j, -i}^{\vee}\right)_{i, j}}
    \]
    and define a \emph{polarized chain of $p$-divisible groups $($of type $(g, J))$ over $R$} to be a polarized object $((X_i)_i, (\rho_{i, j})_{i, j}, (\lambda_i)_i)$ in the category of chains of $p$-divisible groups over $R$.
    As before we write
    \[
        \catpolchpdiv_{g, J}
    \]
    for the stack over $\Spf(\Zp)$ of polarized chains of $p$-divisible groups.
\end{definition}

\begin{proposition} \label{prop:disp-p-div-chains}
    The functor $\DD$ from \Cref{thm:disp-p-div} induces a morphism
    \[
        \catchpdiv_{h, J, d} \lra \catchdisps_{h, J, d}
    \]
    that restricts to an equivalence $\catchpdiv_{h, J, d}^{\formal} \to \catchdisps_{h, J, d}^{F \blank \nilp}$ between the substack of those chains of $p$-divisible groups $((X_i)_i, (\rho_{i, j})_{i, j})$ such that the $X_i$ are formal $p$-divisible groups and the substack of those chains of displays $((M_i)_i, (\rho_{i, j})_{i, j}, (\theta_i)_i, (M_{i, 1})_i, (\Psi_i)_i)$ such that the $(M_i, M_{i, 1}, \Psi_i)$ are $F$-nilpotent.

    In the situation of \Cref{subsec:polarized-chains}, we similarly obtain a morphism
    \[
        \catpolchpdiv_{g, J} \lra \catpolchdisps_{g, J}
    \]
    that restricts to an equivalence $\catpolchpdiv_{g, J}^{\formal} \to \catpolchdisps_{g, J}^{F \blank \nilp}$.
\end{proposition}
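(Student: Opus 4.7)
The plan is to construct the morphism by applying $\DD$ objectwise, to deduce the chain-level axioms from the properties of $\DD$ supplied in \Cref{thm:disp-p-div}, and to promote the non-chain equivalence between formal $p$-divisible groups and $F$-nilpotent displays to the chain setting using the same reference.

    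For the construction, given a chain $((X_i)_i, (\rho_{i,j})_{i,j})$ of $p$-divisible groups over $R$, set $(M_i, M_{i,1}, \Psi_i) \coloneqq \DD(X_i)$. The hypothesis $\ker \rho_{i, i+h} = X_{i+h}[p]$ produces a unique isomorphism $\theta_i \colon X_i \to X_{i+h}$ with $\rho_{i, i+h} \circ \theta_i = p \cdot \id_{X_i}$, and hence also $\theta_i \circ \rho_{i, i+h} = p \cdot \id_{X_{i+h}}$. Define the display-side transition maps as $\DD(\rho_{i,j}) \colon M_i \to M_j$ and $\DD(\theta_i)^{-1} \colon M_i \to M_{i+h}$. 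The axioms of \Cref{def:chains} are then direct translations of identities on the $p$-divisible group side: the relation analogous to $\rho_{i, i+h} = p \cdot \theta_i$ on displays is the image under $\DD$ of $\theta_i \circ \rho_{i, i+h} = p \cdot \id_{X_{i+h}}$; the compatibility $\theta_j \circ \rho_{i,j} = \rho_{i+h,j+h} \circ \theta_i$ on displays comes from the analogous identity on $p$-divisible groups, which in turn follows by comparing the two factorizations $\rho_{i, j+h} = \rho_{i, i+h} \circ \rho_{i+h, j+h} = \rho_{i,j} \circ \rho_{j, j+h}$ and cancelling $p$ (Hom groups of $p$-divisible groups being $p$-torsion free); and the constant-rank condition for $\DD(\rho_{i,j})$ with $i \leq j \leq i + h$ is precisely the content of the last bullet of \Cref{thm:disp-p-div}, applied to $\rho_{i,j}$ with companion map $\rho_{j, i+h} \circ \theta_i \colon X_i \to X_j$. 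The polarized statement follows at once because $\DD$ is compatible with dualities.

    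For the equivalence on the formal and $F$-nilpotent loci, full faithfulness is inherited from the corresponding non-chain equivalence in \Cref{thm:disp-p-div}. For essential surjectivity, one takes a chain of $F$-nilpotent displays, recovers the formal $p$-divisible groups $X_i$ and the morphisms $\rho_{i,j}, \theta_i$ on the $p$-divisible group side via the inverse of the non-chain equivalence, and verifies the axioms for a chain of $p$-divisible groups. The display-level identity corresponding to $\rho_{i, i+h} = p \cdot \theta_i$ translates into $\theta_i \circ \rho_{i, i+h} = p \cdot \id$ with $\theta_i$ an isomorphism, forcing $\ker \rho_{i, i+h} = X_{i+h}[p]$ as required. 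The main obstacle is to show that each intermediate $\rho_{i,j}$ with $i < j < i + h$ is an isogeny of height exactly $j - i$. One constructs the companion map $g \coloneqq \rho_{j, i+h} \circ \theta_i$ on the $p$-divisible group side and checks $\rho_{i,j} \circ g = g \circ \rho_{i,j} = p \cdot \id$, which already makes $\rho_{i,j}$ an isogeny; that its height is $j - i$ then follows by invoking \Cref{lem:chains-rank} to reduce to algebraically closed fields of characteristic $p$, where it becomes the classical Dieudonné-theoretic statement that the rank of $\DD(\rho_{i,j})$ modulo $p$ equals $h$ minus the height of $\rho_{i,j}$. The polarized version is handled identically using the duality compatibility of $\DD$.
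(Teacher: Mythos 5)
Your proposal is correct and follows essentially the same route as the paper: apply $\DD$ objectwise, extract $\theta_i$ from the condition $\ker(\rho_{i,i+h}) = X_{i+h}[p]$, verify the rank condition via the last bullet of \Cref{thm:disp-p-div} with the companion map $\rho_{j,i+h}\circ\theta_i$, and deduce the restricted equivalence from the formal/$F$-nilpotent equivalence there; the paper's proof is just a terser version of this. The only cosmetic quibble is that invoking \Cref{lem:chains-rank} for the height of $\rho_{i,j}$ in the essential-surjectivity step is unnecessary, since the height of an isogeny of $p$-divisible groups is already locally constant and can be read off at geometric points directly.
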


\begin{proof}
    Given a chain of $p$-divisible groups $((X_i)_i, (\rho_{i, j})_{i, j})$ over $R$, the condition $\ker(\rho_{i, i + h}) = X_{i + h}[p]$ implies that there exists a uniquely determined isomorphism $\theta_i \colon X_{i + h} \to X_i$ such that $p \cdot \theta_i = \rho_{i, i + h}$.
    We can then apply the functor $\DD$ to the data $X_i$, $\rho_{i, j}$, $\theta_i$, and by \Cref{rmk:chains-disps} this yields a chain of displays; here we use the last part of \Cref{thm:disp-p-div} to verify the rank condition for the homomorphism $\DD(\rho_{i, j})$.
    Thus we obtain the desired morphism $\catchpdiv_{h, J, d} \to \catchdisps_{h, J, d}$.

    The claim that this morphism restricts to an equivalence $\catchpdiv_{h, J, d}^{\formal} \to \catchdisps_{h, J, d}^{F \blank \nilp}$ follows immediately from the corresponding statement in \Cref{thm:disp-p-div}.
\end{proof}     \section{Application to the Siegel modular variety}

As in \Cref{sec:chains-pairs-disps}, $(m, n)$ denotes a tuple of positive integers with $m \geq n + 1$, where we allow $n$ to take the additional value $1 \blank \rdt$.

\subsection{The Siegel modular variety at parahoric level}

Fix a positive integer $g$ and a non-empty subset $J \subseteq \ZZ$ such that $J + 2g \ZZ = J$ and $-J = J$ as in \Cref{subsec:polarized-chains}.
Also fix an auxiliary integer $N \geq 3$ such that $p \nmid N$, and equip $(\ZZ/N\ZZ)^{2g}$ with the standard symplectic form that is represented by the matrix
\[
    \begin{pmatrix}
        0 & \widetilde{I_g} \\ - \widetilde{I_g} & 0
    \end{pmatrix},
    \quad \text{where} \; \,
    \widetilde{I_g} \coloneqq
    \begin{psmallmatrix}
        & & 1 \\
        & \iddots & \\
        1 & &
    \end{psmallmatrix}.
\]

\begin{definition}
    We define the \emph{Siegel modular variety} as the moduli problem
    \[
        \calA_{g, J, N} \colon \curlybr*{\text{$\Zp$-algebras}} \lra \curlybr*{\text{sets}}
    \]
    by setting $\calA_{g, J, N}(R)$ to be the set of isomorphism classes of tuples $((A_i)_i, (\rho_{i, j})_{i, j}, (\lambda_i)_i, \eta)$ that are given as follows: 
    \begin{itemize}
        \item
        $((A_i)_i, (\rho_{i, j})_{i, j})$ is a diagram of projective Abelian varieties of dimension $g$ over $R$ of shape $J^{\op}$ such that $\rho_{i, j} \colon A_j \to A_i$ is an isogeny of degree $p^{j - i}$ and $\ker(\rho_{i, i + 2g}) = A_{i + 2g}[p]$.

        \item
        $(\lambda_i)_i \colon ((A_i)_i, (\rho_{i, j})_{i, j}) \to ((A_{- i}^{\vee})_i, (\rho_{-j, -i}^{\vee})_{i, j})$ is a symmetric isomorphism such that for some (or equivalently every) $i \geq 0$ the symmetric isogeny
        \[
            A_i \stackrel{\lambda_i}\lra A_{-i}^{\vee} \xrightarrow{\rho_{-i, i}^{\vee}} A_i
        \]
        is a polarization.

        \item
        $\eta \colon A[N] \to \underline{(\ZZ/N\ZZ)^{2g}}$ is an isomorphism of finite \'etale $R$-group schemes such that there exists an isomorphism $\mu_N \to \underline{\ZZ/N\ZZ}$ making the diagram
        \[
        \begin{tikzcd}
            A[N] \times A[N] \ar[r] \ar[d, "\eta \times \eta"]
            & \mu_N \ar[d]
            \\
            \underline{(\ZZ/N\ZZ)^{2g}} \times \underline{(\ZZ/N\ZZ)^{2g}} \ar[r]
            & \underline{\ZZ/N\ZZ}
        \end{tikzcd}
        \]
        commutative. Here $A[N]$ is the $N$-torsion of any of the Abelian varieties $A_i$ (note that $\rho_{i, j}$ restricts to an isomorphism $A_j[N] \to A_i[N]$ because $p$ does not divide $N$), and the upper horizontal arrow is the Weil pairing with respect to $(\lambda_i)_i$.
    \end{itemize}

    We also write $\calA_{g, J, N}^{\wedge}$ for the $p$-completion of $\calA_{g, J, N}$, \textit{i.e.}~its restriction to the category of $p$-nilpotent rings.
\end{definition}

\begin{proposition}
  The Siegel modular variety
  $\calA_{g, J, N}$ is representable by a quasi-projective $\Zp$-scheme.
    Moreover, there exists a natural smooth morphism
    \[
        \calA_{g, J, N} \lra \squarebr*{\GSp\roundbr*{(\Lambda_i)_i} \backslash \bbM^{\loc, \GSp}},
    \]
    where we use the notation from \Cref{subsec:quotient-stack}.
\end{proposition}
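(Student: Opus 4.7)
The plan is to follow the original construction of Rapoport and Zink \cite[Chapter 6]{rapoport-zink-96}. For representability, I would fix an index $i_0 \in J$ and project onto the $i_0$-th factor to obtain a morphism from $\calA_{g, J, N}$ into a classical Siegel-type moduli problem with full level-$N$ structure (with the polarization appropriately normalized). The latter is quasi-projective over $\Zp$ by Mumford's classical result, and the additional data of the remaining $A_i$ together with the isogenies $\rho_{i, j}$ is representable relatively over the image by a closed subscheme of a product of Grothendieck--Quot schemes classifying finite locally free subgroup schemes of $A_{i_0}[p^r]$ for $r$ large. The chain relation $\rho_{i, i + 2g} = [p] \circ \theta_i^{-1}$ and the compatibilities with the polarizations cut out closed conditions, so the total space $\calA_{g, J, N}$ is representable by a quasi-projective $\Zp$-scheme.

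For the morphism to the local model stack, I would associate to a tuple $((A_i)_i, (\rho_{i, j})_{i, j}, (\lambda_i)_i, \eta) \in \calA_{g, J, N}(R)$ the induced polarized chain of pairs
\[
    \roundbr[\big]{(H^1_{\dR}(A_i/R))_i, (\rho_{i, j}^{\ast})_{i, j}, (\theta_i^{\ast})_i, (\omega_{A_i})_i, (\lambda_i^{\ast})_i},
\]
where $\theta_i \colon A_{i + 2g} \to A_i$ is the unique isomorphism with $p \cdot \theta_i = \rho_{i, i + 2g}$. The rank condition on $\rho_{i, j}^{\ast}$ modulo $p$ demanded in the definition of a chain of pairs follows from \Cref{lem:chains-rank} applied to $\rho_{i, j}^{\ast}$ and $\theta_i^{\ast, -1} \circ \rho_{j, i + 2g}^{\ast}$, reducing to the case of an algebraically closed field where it is classical. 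By the classification of polarized chains of $\Zp$-modules of the prescribed local type (\cite[Theorem 3.16]{rapoport-zink-96}), this polarized chain is \'etale-locally on $\Spec(R)$ isomorphic to the standard chain $(\Lambda_{i, R})_i$, and the Hodge filtration then defines an $R$-point of $\bbM^{\loc, \GSp}$. Descending by $\GSp((\Lambda_i)_i)$ gives a canonical morphism to the quotient stack.

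The smoothness of this morphism is the main content of the statement and is proved via Grothendieck--Messing (or crystalline Dieudonn\'e) deformation theory. Concretely, for a surjection $R' \twoheadrightarrow R$ of $\Zp$-algebras whose kernel is nilpotent and carries divided powers, deforming the polarized chain of Abelian varieties from $R$ to $R'$ is equivalent to lifting the Hodge filtration $\omega_{A_i}$ inside the evaluation of the crystalline cohomology at $R' \to R$, compatibly with the chain and the polarization data (the finite \'etale level-$N$ structure lifts uniquely and imposes no condition). This is exactly the formal smoothness criterion for the morphism into $\bbM^{\loc, \GSp}$. The hard part, and the main bookkeeping obstacle, is to ensure that the polarization and chain compatibilities can all be lifted simultaneously, not just the Hodge filtrations on the individual $H^1_{\dR}(A_i/R)$; this is controlled by the fact that $\bbM^{\loc, \GSp}$ is cut out precisely by the Lagrangian and chain compatibility conditions on the filtrations. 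Once formal smoothness is established, combining it with the evident finite presentation of the morphism (both source and target are of finite presentation over $\Zp$) yields smoothness in the desired sense.
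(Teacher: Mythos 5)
Your proposal is correct and is essentially an expansion of the paper's proof, which consists entirely of citations: representability is delegated to \cite[Theorem 7.9]{mumford-git} together with \cite[Definition 6.9]{rapoport-zink-96}, and the smooth morphism to $[\GSp((\Lambda_i)_i) \backslash \bbM^{\loc, \GSp}]$ to the local model diagram of \cite[Section 7.(ii)]{he-rapoport}, whose content (the de Rham chain, étale-local triviality via \cite[Theorem 3.16]{rapoport-zink-96}, and Grothendieck–Messing deformation theory) is exactly what you sketch.
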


\begin{proof}
    The representability follows from \cite[Theorem 7.9]{mumford-git}; see also \cite[Definition 6.9]{rapoport-zink-96}.
    For the second claim, see \cite[Section 7(ii)]{he-rapoport}.
\end{proof}

\begin{definition} \label{def:upsilon}
    We define the morphism $\Upsilon \colon \calA_{g, J, N}^{\wedge} \to \catpolchdisps_{g, J}$ as the composition
    \[
        \Upsilon \colon \calA_{g, J, N}^{\wedge} \lra \catpolchpdiv_{g, J} \lra \catpolchdisps_{g, J},
    \]
    where the first arrow is given by
    \[
        \roundbr*{(A_i)_i, \left(\rho_{i, j}\right)_{i, j}, (\lambda_i)_i, \eta} \longmapsto \roundbr*{(A_i[p^{\infty}])_i, \left(\rho_{i, j}\right)_{i, j}, (\lambda_i)_i}
    \]
    (this is well defined because the functor $A \mapsto A[p^{\infty}]$ from projective Abelian varieties to $p$-divisible groups is compatible with dualities up to a sign $-1$, see \cite[Proposition 1.8]{oda}), and the second arrow is the one from \Cref{prop:disp-p-div-chains}.

    We then also have the induced morphism
    \[
        \upsilon^{(m, n)} \colon \calA_{g, J, N}^{\wedge} \lra \catpolchdisps_{g, J}^{(m, n)}
    \]
    for $n \neq 1 \blank \rdt$ and
    \[
        \upsilon^{(m, 1 \blank \rdt)} \colon \roundbr*{\calA_{g, J, N}}_{\Fp} \lra \catpolchdisps_{g, J}^{(m, 1 \blank \rdt)}.
    \]
\end{definition}

\begin{remark}
   The perfection of the composition
    \[
        \roundbr*{\calA_{g, J, N}}_{\Fp} \xrightarrow{\upsilon^{(m, 1 \blank \rdt)}} \catpolchdisps_{g, J}^{(m, 1 \blank \rdt)} \lra \cathpolchdisps_{g, J}^{(m, 1 \blank \rdt)}
    \]
    yields the morphism $\upsilon_K$ from \cite[Section 4.4]{shen-yu-zhang}, at least up to the difference in normalization (see \Cref{rmk:relation-shtukas,rmk:relation-shtukas-2}).

    In particular, the fibers of this composition are precisely the EKOR strata defined in \cite[Definition~6.4]{he-rapoport}.
    More precisely, we have a natural bijection
    \[
        \abs*{\roundbr*{\cathpolchdisps^{(m, 1 \blank \rdt)}_{g, J}}_{\Fpbar}} \cong \breve{K}_{\sigma} \backslash (\breve{K}_1 \backslash X),
        \quad \text{where} \; \,
        X = \bigcup_{w \in \Adm_{g, J}} \breve{K} w \breve{K} \subseteq \GSp(V)(\Qpbrev)
    \]
    as in the introduction.
    For $x \in \breve{K}_{\sigma} \backslash (\breve{K}_1 \backslash X)$ we thus have an associated reduced locally closed substack $\calC_x \subseteq (\cathpolchdisps^{(m, 1 \blank \rdt)}_{g, J})_{\Fpbar}$ that satisfies $\abs{\calC_x} = \curlybr{x}$.
    The locally closed subscheme $\EKOR_x \subseteq (\calA_{g, J, N})_{\Fpbar}$ defined by the pullback square
    \[
    \begin{tikzcd}
        \EKOR_x \ar[r] \ar[d]
        & \roundbr*{\calA_{g, J, N}}_{\Fpbar} \ar[d]
        \\
        \calC_x \ar[r]
        & \roundbr*{\cathpolchdisps_{g, J}^{(m, 1 \blank \rdt)}}_{\Fpbar}
    \end{tikzcd}
    \]
    then is precisely the EKOR stratum corresponding to $x$, up to possibly carrying some non-reduced structure.

    Below in \Cref{thm:main-res} we will prove that the morphism $\upsilon^{(m, 1 \blank \rdt)}$ is smooth, and by \Cref{lem:restricting-pol-hpol} this implies the smoothness of $\calA_{g, J} \to \cathpolchdisps^{(m, n)}_{g, J}$.
    Observing that $\calC_x \to \Spec(\Fpbar)$ is a gerbe, hence smooth, we can then deduce the smoothness of $\EKOR_x$ over $\Fpbar$.
\end{remark}

\subsection{Smoothness of the morphism $\boldsymbol{\upsilon^{(m, n)}}$}

\begin{definition}
    Let $B_g$ be the finite partially ordered set of symmetric Newton polygons starting in $(0, 0)$, ending in $(2g, g)$ and with slopes in $[0, 1]$, where we declare $\nu \leq \nu'$ if $\nu$ lies above $\nu'$.
    This is precisely the set $B(\GSp(V), \mu_g)$, see \cite[Section 2]{he-rapoport}, and it classifies isocrystals of height $2g$ and slopes contained in $[0, 1]$ that are furthermore equipped with a symplectic form valued in the standard simple isocrystal of slope~$1$; see also \cite[Remark 3.4(iii)]{rapoport-richartz}.

    We have a natural map of sets $\abs{\catpolchpdiv_{g, J}} \to B_g$ that sends $((X_i)_i, (\rho_{i, j})_{i, j}, (\lambda_i)_i) \in \catpolchpdiv_{g, J}(k)$ for some algebraically closed field $k$ of characteristic $p$ to the Newton polygon of any of the $X_i$.
    We denote the fibers of this map by $S^{\pdiv}_{\nu}$ and their preimages in $\abs{\calA_{g, J, N}^{\wedge}}$ by $S_{\nu}$ and call these subsets \emph{Newton strata}.
\end{definition}

\begin{proposition} \label{thm:newton-strata}
    We have the following properties: 
    \begin{itemize}
        \item
        The $S_{\nu} \subseteq \abs{\calA_{g, J, N}^{\wedge}}$ are locally closed subsets.

        \item
        Let $\nu, \nu' \in B_g$.
        Then the intersection $\overline{S_{\nu'}} \cap S_{\nu}$ is non-empty if and only if $\nu \leq \nu'$.

        \item
        $\abs{\catpolchpdiv_{g, J}^{\formal}}$ is the union of those $S^{\pdiv}_{\nu}$ such that $\nu$ does not contain segments of slope $0$ or $1$.
    \end{itemize}
\end{proposition}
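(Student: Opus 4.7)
The plan is to address the three assertions separately, using Grothendieck--Katz for (1), a slope-calculation argument for (3), and established results on Newton strata of Siegel Shimura varieties for (2). For (1), fix any $i \in J$ and consider the $F$-isocrystal attached to the $p$-divisible group $A_i[p^\infty]$ over an arbitrary noetherian base; since the $\rho_{i,j}$ are isogenies, the choice of $i$ does not affect the resulting Newton polygon. The Grothendieck--Katz specialization theorem asserts that under specialization the Newton polygon can only go up, which in the paper's convention ($\nu \leq \nu'$ iff $\nu$ lies above $\nu'$) means it can only decrease. Hence for each $\nu_0 \in B_g$ the subset $\set{x \in \abs{\calA_{g,J,N}^\wedge}}{\nu(x) \leq \nu_0}$ is closed, and
\[
    S_\nu = \set[\big]{x}{\nu(x) \leq \nu} \setminus \bigcup_{\nu' < \nu} \set[\big]{x}{\nu(x) \leq \nu'}
\]
exhibits $S_\nu$ as locally closed.

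For (3), recall that a $p$-divisible group over an algebraically closed field of characteristic $p$ is formal if and only if its connected-étale sequence has trivial étale part, equivalently, its Newton polygon contains no segment of slope $0$. For a polarized chain $((X_i)_i, (\rho_{i,j})_{i,j}, (\lambda_i)_i)$ the isogenies $\rho_{i,j}$ induce isomorphisms of isocrystals, so all $X_i$ share the same Newton polygon $\nu$, and the chain is formal precisely when $\nu$ has no slope-$0$ segment. The symmetry of $\nu$ about slope $1/2$, enforced by the polarization and built into the definition of $B_g$, then makes absence of slope-$0$ and slope-$1$ segments equivalent, giving the claim.

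For (2), the implication $\overline{S_{\nu'}} \cap S_\nu \neq \emptyset \Rightarrow \nu \leq \nu'$ is again immediate from Grothendieck--Katz. For the converse, I would invoke the non-emptiness of every Newton stratum $S_\nu$ for $\nu \in B_g$, which follows from the verification of the He--Rapoport axioms for the Siegel modular variety at parahoric level. Given non-emptiness, one combines the purity of the Newton stratification due to de Jong and Oort with Oort's proof of the Grothendieck conjecture for Siegel, suitably extended to parahoric level, to conclude that for every pair $\nu \leq \nu'$ the stratum $S_\nu$ lies in the closure of $S_{\nu'}$. The main obstacle is precisely this converse direction: realizing the full partial order on $B_g$ via closure relations requires non-trivial deformation-theoretic input on polarized chains of $p$-divisible groups, and constitutes the essential content of the proposition.
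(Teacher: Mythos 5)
Your argument is correct, and it in fact supplies more detail than the paper, whose entire proof of this proposition is the citation ``This is all contained in \cite{he-rapoport}; see in particular Axiom 3.5, Theorem 5.6 and Section 7.'' Your treatment of the first and third bullets is the standard one and works: Grothendieck--Katz semicontinuity shows that each set $\{x : \nu(x) \leq \nu_0\}$ is closed (and since $B_g$ is finite, the union you subtract off is closed, so $S_\nu$ is indeed locally closed), and for the third bullet the only point is that formal $=$ connected $=$ no slope-$0$ segment, with the slope-$1$ condition coming for free from the symmetry of the polygons in $B_g$ --- exactly as you say. For the second bullet you correctly isolate the converse direction (non-emptiness of every $S_\nu$ together with $\overline{S_{\nu'}} \supseteq S_\nu$ for $\nu \leq \nu'$) as the essential content and defer it to the literature, which is also what the paper does: it invokes the axiomatic framework of \cite{he-rapoport}, whose axioms are verified for the Siegel modular variety at parahoric level in Section 7 of that paper and whose Theorem 5.6 then yields the closure relations. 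The inputs you name (de Jong--Oort purity, Oort's proof of the Grothendieck conjecture, non-emptiness via the He--Rapoport axioms) are essentially the ingredients packaged by that framework, so your route and the paper's coincide up to how the references are organized; the one caveat is that ``suitably extended to parahoric level'' is doing real work --- this extension is not formal and is precisely what the He--Rapoport machinery, together with the group-theoretic results it rests on, provides, so the clean single reference is the one the paper gives.
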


\begin{proof}
    This is all contained in \cite{he-rapoport}.
    See in particular Axiom 3.5, Theorem 5.6 and Section 7.
\end{proof}

\begin{proposition} \label{lem:specializing}
    Let $\nu, \nu' \in B_g$ with $\nu \leq \nu'$, and let $x \in S_{\nu'}^{\pdiv}$.
    Then there exists a preimage $y \in S_{\nu'}$ of $x$ that specializes to a point in $S_{\nu}$.
\end{proposition}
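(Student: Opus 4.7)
The plan is to proceed in two main steps.

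First, I would establish that the projection $S_{\nu'}\to S_{\nu'}^{\pdiv}$ is surjective on $\Fpbar$-points. Given a representative polarized chain of $p$-divisible groups $(X_i)$ for $x$ over $\Fpbar$, the idea is to realize $X_0$ as the $p$-divisible group of some $g$-dimensional Abelian variety $A_0$, and then construct the remaining $A_j$ as the isogeny quotients of $A_0$ cut out by the finite flat subgroup schemes $\ker(\rho_{0, j}) \subseteq X_0$. The polarization and the other compatibilities transport from $(X_i)$ to $(A_i)$ via Tate-style statements over $\Fpbar$ (or equivalently via the classification of polarized isocrystals), and a level-$N$ structure can be chosen since the $N$-torsion is \'etale and $\Fpbar$ is algebraically closed. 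This produces a preimage $y_0 \in S_{\nu'}$ of $x$.

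Second, and this is the main step, I would modify $y_0$ within its fiber over $x$ to a point $y$ whose closure meets $S_\nu$. The key object is the central leaf $C \subseteq S_{\nu'}$ through $y_0$, that is, the locally closed subset of $\Fpbar$-points whose polarized chain of $p$-divisible groups is isomorphic to $(X_i)$: every $\Fpbar$-point of $C$ is automatically a preimage of $x$, so it suffices to show that $\overline{C} \cap S_\nu$ is non-empty. Using \Cref{thm:newton-strata} to ensure $S_\nu$ is non-empty and contained in $\overline{S_{\nu'}}$, I would invoke a parahoric version of Oort's almost product decomposition for $S_{\nu'}$ together with Mantovan's product formula: under the resulting identification of $S_{\nu'}$ (up to finite \'etale covers) with a product of $C$ and an open subset of the Rapoport--Zink space $\mathcal M$ attached to $x$, the closure relations of Newton strata on the Rapoport--Zink side (given by Grothendieck's specialization of Newton polygons inside $\mathcal M$) translate into closure relations on $\calA_{g, J, N}^{\wedge}$, forcing $\overline{C}$ to meet $S_\nu$ for every $\nu \leq \nu'$. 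The desired $y$ can then be taken to be the generic point of any irreducible component of $\overline{C}$ that meets $S_\nu$.

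The main obstacle will be establishing the almost product decomposition in the general parahoric Siegel setting. In the hyperspecial case this is classical work of Oort, completed by Mantovan, but for general parahoric level $J$ one needs to carefully combine the Rapoport--Zink uniformization of $S_{\nu'}$ (implicit in the proof of \Cref{thm:newton-strata}) with the chain structure of the moduli problem, and to verify that the chain-level quasi-isogeny classes of polarized chains of $p$-divisible groups behave as expected under specialization. Once this structural input is in place, the conclusion is immediate.
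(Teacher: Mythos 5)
Your step 2 contains the essential gap. The paper's proof is a short, elementary isogeny-moving argument: by \Cref{thm:newton-strata} there is \emph{some} specialization from $S_{\nu'}$ to $S_{\nu}$, and it can be realized by a point $A' \in \calA_{g, J, N}(R)$ for a rank one valuation ring $R$ of characteristic $p$ with algebraically closed fraction field $K$. Since $A'[p^{\infty}]_K$ and $X$ lie in the same Newton stratum, their polarized isocrystals are isomorphic, and after scaling by $p^M$ one obtains an isogeny $f \colon A'[p^{\infty}]_K \to X$ with $f^*\lambda = p^{2M}\lambda'$. Taking the flat closures over $R$ of the kernels $\ker(f_i)$ and dividing out produces a new point $A'' \in \calA_{g, J, N}(R)$ that still realizes a specialization from $S_{\nu'}$ to $S_{\nu}$ but whose generic fiber now maps to $x$. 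The entire content is that one can move within an isogeny class \emph{across the specialization}, i.e.\ over the valuation ring, not just over a field.

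Your proposal instead fixes a preimage $y_0$ over $\Fpbar$ and tries to show that the closure of the central leaf $C$ through $y_0$ meets $S_{\nu}$, invoking the almost product structure and ``closure relations of Newton strata on the Rapoport--Zink side.'' This step does not work as stated: the Rapoport--Zink space attached to $x$ parametrizes quasi-isogenies from the \emph{fixed} polarized chain $X$, so every point of it has Newton polygon $\nu'$ and there are no nontrivial Newton specializations inside it. More generally, the almost product decomposition (even granting its availability at parahoric level) describes the stratum $S_{\nu'}$ itself and carries no information about its closure. Note that $\overline{C} \cap S_{\nu} \neq \emptyset$ is essentially equivalent to the proposition you are proving, since every preimage of $x$ lies in $C$; in the literature such closure statements for central leaves are themselves deduced from isogeny-moving arguments of exactly the kind the paper uses, so your route is circular, or at least missing its key input. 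Your step 1 (algebraizing $X$ to get a preimage over $\Fpbar$) is true but unnecessary as a separate step: in the paper's argument the preimage is produced over $K$ as a byproduct of the isogeny modification, and the only global input needed is \Cref{thm:newton-strata}.
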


\begin{proof}
    By \Cref{thm:newton-strata} there exists a point in $S_{\nu'}$ specializing to a point in $S_{\nu}$.
    This specialization can be realized by a point
    \[
        A' = \roundbr*{\left(A'_i\right)_i, \left(\rho'_{i, j}\right)_{i, j}, \left(\lambda'_i\right)_i, \eta'} \in \calA_{g, J, N}(R)
    \]
    for some rank $1$ valuation ring $R$ of characteristic $p$ with algebraically closed fraction field $K$.
    Write $X' \coloneqq A'[p^{\infty}] \in \catpolchpdiv_{g, J}(R)$ for the image of $A'$.
    After possibly enlarging $R$ we also find an object
    \[
        X = \roundbr*{(X_i)_i, \left(\rho_{i, j}\right)_{i, j}, (\lambda_i)_i} \in \catpolchpdiv_{g, J}(K)
    \]
    representing $x$.

    Now $X'_K$ and $X$ both lie in $S^{\pdiv}_{\nu'}$; hence there exists an isomorphism between their associated isocrystals that is compatible with the polarizations.
    After multiplying such an isomorphism with $p^M$ for a suitable big enough integer $M \geq 0$, we obtain an isogeny
    \[
        f = (f_i)_i \colon X'_K \lra X
    \]
    in $\catchpdiv_{g, J}(K)$ that satisfies $f^* \lambda = p^{2M} \cdot \lambda'$.

    For $i \in J$ let $H_i \subseteq X'_i$ be the flat closure of $\ker(f_i) \subseteq X'_{i, K}$; this is a finite free $R$-subgroup scheme of $X'_i$ of order $p^{2gM}$.
    Define $A''_i \coloneqq A'_i/H_i$ and let $\rho''_{i, j} \colon A''_j \to A''_i$ be the isogeny induced by $\rho'_{i, j}$.
    Then there exists a unique isomorphism $\lambda''_i \colon A''_i \to (A''_{-i})^{\vee}$ that makes the diagram
    \[
    \begin{tikzcd}[column sep = large]
        A'_i \ar[r, "p^{2M} \lambda'_i"] \ar[d]
        & \roundbr*{A'_{-i}}^{\vee}
        \\
        A''_i \ar[r, "\lambda''_i"]
        & \roundbr*{A''_{-i}}^{\vee} \ar[u]
    \end{tikzcd}
    \]
    commutative.
    In this way we obtain a point
    \[
        A'' = \roundbr*{\left(A''_i\right)_i, \left(\rho''_{i, j}\right)_{i, j}, \left(\lambda''_i\right)_i, \eta'} \in \calA_{g, J, N}(R)
    \]
    (note that $A''_i[N] = A'_i[N]$ because $H_i$ is $p$-primary).

    By construction $A''$ still realizes a specialization from $S_{\nu'}$ to $S_{\nu}$, and moreover $f$ gives rise to an isomorphism $A''[p^{\infty}]_K \cong X$ in $\catpolchpdiv_{g, J}(K)$.
    Thus setting $y \in S_{\nu'} \subseteq \abs{\calA_{g, J, N}^{\wedge}}$ to be the point corresponding to $A''_K$ finishes the proof.
\end{proof}

\begin{corollary} \label{cor:specializing}
    Every point $x \in \abs{\catpolchpdiv_{g, J}}$ has a preimage $y \in \abs{\calA_{g, J, N}^{\wedge}}$ that specializes to a point mapping into $\abs{\catpolchpdiv_{g, J}^{\formal}}$.
\end{corollary}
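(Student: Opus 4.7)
The proof is essentially a direct combination of the two preceding statements, so the plan is to make explicit how they combine.

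First I would reduce the problem to finding a suitable $\nu \in B_g$. Decompose $|\catpolchpdiv_{g, J}| = \bigsqcup_{\nu' \in B_g} S^{\pdiv}_{\nu'}$ by the Newton stratification and fix the unique $\nu'$ with $x \in S^{\pdiv}_{\nu'}$. Let $\nu_{\mathrm{ss}} \in B_g$ be the supersingular Newton polygon, i.e.\ the straight line from $(0, 0)$ to $(2g, g)$ of constant slope $1/2$. This is visibly symmetric and has no slope equal to $0$ or $1$, so by the last bullet of \Cref{thm:newton-strata} we have $S^{\pdiv}_{\nu_{\mathrm{ss}}} \subseteq |\catpolchpdiv_{g, J}^{\formal}|$.

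Next I would verify that $\nu_{\mathrm{ss}} \leq \nu'$. Any polygon in $B_g$ has non-decreasing slopes in $[0, 1]$ that, by symmetry, pair up as $\lambda$ and $1 - \lambda$; hence such a polygon lies below the straight line from $(0, 0)$ to $(2g, g)$, which is precisely $\nu_{\mathrm{ss}}$. So $\nu_{\mathrm{ss}}$ lies above $\nu'$, i.e.\ $\nu_{\mathrm{ss}} \leq \nu'$ in the order of $B_g$.

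Finally I would apply \Cref{lem:specializing} with $\nu = \nu_{\mathrm{ss}}$ and the chosen $\nu'$: it produces a preimage $y \in S_{\nu'} \subseteq |\calA_{g, J, N}^{\wedge}|$ of $x$ together with a specialization to a point $z \in S_{\nu_{\mathrm{ss}}}$. Since $z$ lies in the Newton stratum for $\nu_{\mathrm{ss}}$, its image under $\Upsilon$ lies in $S^{\pdiv}_{\nu_{\mathrm{ss}}} \subseteq |\catpolchpdiv_{g, J}^{\formal}|$, and the corollary follows. There is no genuine obstacle here, since both the existence of a preimage and the existence of a suitable specialization are already handled in \Cref{lem:specializing}; the only content of the corollary is the observation that the supersingular polygon lies in the formal locus and dominates every Newton polygon in the specialization order on $B_g$.
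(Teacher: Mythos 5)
Your proof is correct and is essentially the paper's own argument: the paper's proof is the one-line "combine \Cref{lem:specializing} with the last part of \Cref{thm:newton-strata}", and you have simply made explicit the (correct) choice of $\nu$, namely the supersingular polygon, which is minimal in $B_g$ and has no slopes equal to $0$ or $1$.
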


\begin{proof}
    This follows from combining \Cref{lem:specializing} with the last part of \Cref{thm:newton-strata}.
\end{proof}

\begin{lemma} \label{lem:infinitesimal-lifting}
    Let $\calX$, $\calY$ be locally Noetherian formal algebraic stacks, let $f \colon \calX \to \calY$ be a morphism representable by algebraic stacks that is locally of finite type, and let $x \in \abs{\calX}$.

    Then $f$ is smooth at $x$ if and only if every lifting problem
    \[
    \begin{tikzcd}
        \Spec(B) \ar[r] \ar[d]
        & \calX \ar[d, "f"]
        \\
        \Spec(B') \ar[r] \ar[ru, dashed]
        & \calY\rlap{,}
    \end{tikzcd}
    \]
    where $B' \to B$ is a surjective homomorphism between Artinian local rings and the map $\abs{\Spec(B)} \to \abs{\calX}$ has image $\curlybr{x}$, admits a solution.
\end{lemma}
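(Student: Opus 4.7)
My plan is to reduce the question to the standard pointwise infinitesimal criterion for smoothness of a morphism of locally Noetherian schemes by passing to smooth charts. The ``only if'' direction is the easy one: I would pick smooth charts through $x$ so that the given lifting problem for $f$ pulls back to a lifting problem for a smooth morphism between locally Noetherian formal schemes, where the infinitesimal lifting criterion holds by the very definition of smoothness.

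For the ``if'' direction, I would first choose a smooth surjection $V \to \calY$ from a disjoint union of affine Noetherian adic formal schemes such that $x$ lifts to $\calX_V \coloneqq \calX \times_{\calY} V$; by the assumption that $f$ is representable by algebraic stacks and locally of finite type, $\calX_V$ is an algebraic stack locally of finite type over $V$. I would then pick a smooth surjection $U \to \calX_V$ from a scheme $U$ together with a point $u \in \abs{U}$ mapping to $x$. Unravelling the definition of smoothness via smooth charts, smoothness of $f$ at $x$ becomes equivalent to smoothness of the composition $U \to V$ at $u$, which is a morphism of locally Noetherian formal schemes that is locally of finite type. For such a morphism the pointwise Artinian lifting criterion is equivalent to smoothness at $u$: in the scheme case this is \cite[Tag 02HX]{stacks-project}, and the adic-formal case reduces to it by working level-wise with the closed thickenings $V_k \subseteq V$ and $U_k \coloneqq U \times_V V_k$.

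It then remains to deduce the Artinian lifting property at $u$ for $U \to V$ from the one assumed for $f$ at $x$. Given a lifting problem $\Spec(B) \to U$, $\Spec(B') \to V$ with $B' \to B$ a surjection of Artinian local rings and $\Spec(B) \to U$ supported at $u$, I would compose with the smooth charts $U \to \calX$ and $V \to \calY$ to obtain a lifting problem for $f$ at $x$, which admits a solution $\Spec(B') \to \calX$ by hypothesis. Pairing this solution with the given $\Spec(B') \to V$ yields a morphism $\Spec(B') \to \calX_V$ extending the composition $\Spec(B) \to U \to \calX_V$, and the smoothness of $U \to \calX_V$ then produces the desired lift $\Spec(B') \to U$.

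The main potential obstacle is the adic-formal version of the pointwise smoothness/Artinian lifting equivalence for $U \to V$, since the cleanest references are in the scheme case; however the reduction via the $p$-adic thickenings should be essentially routine given that everything is locally Noetherian and locally of finite type.
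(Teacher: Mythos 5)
Your argument is correct in substance and follows the same route as the paper, whose entire proof is the citation of \cite[Tag 02HX]{stacks-project} for the scheme case together with the remark that the general case reduces to it (with a pointer to \cite[Tag 0DNV]{stacks-project}); your chart-plus-d\'evissage argument is that reduction written out. Your worry about the level-wise step is unfounded: since everything is locally Noetherian, the non-smooth loci of the truncations $U_k \to V_k$ form an increasing chain of closed subsets of a locally Noetherian topological space and hence stabilize locally, so ``smooth at $u$ on every level'' is equivalent to ``smooth at $u$'' for $U \to V$, while the Artinian test diagrams for $U \to V$ are exactly the union over $k$ of those for $U_k \to V_k$.

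Two repairs are needed. First, the cover $U \to \calX_V$ must be taken by a formal scheme rather than a scheme (harmless). Second, and more substantively, in the ``only if'' direction you cannot in general pull the test diagram back to the charts: lifting $\Spec(B') \to \calY$ through the smooth surjection $V \to \calY$ amounts to finding a section of a smooth surjective algebraic space over the Artinian local ring $B'$, which can fail when the residue field is not separably closed (already for $\calY$ a classifying stack of a finite group and a nontrivial torsor over the residue field). The standard fix stays within your framework: after shrinking $\calX$ so that $f$ is smooth, base change along $\Spec(B') \to \calY$ to obtain a smooth algebraic stack over $\Spec(B')$ with a section over $\Spec(B)$, and extend that section using the infinitesimal lifting property of smooth morphisms of algebraic stacks over affine nilpotent thickenings --- which is precisely what the paper's reference to \cite[Tag 0DNV]{stacks-project} supplies. (Alternatively, pass to a finite \'etale extension of $B'$ to reach the charts and then descend the lift, using that the set of lifts is a pseudo-torsor under a quasi-coherent module on an affine scheme.)
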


\begin{proof}
    When $\calX$ and $\calY$ are schemes, this is precisely \cite[Tag 02HX]{stacks-project}.
    The general case can be reduced to this; see also \cite[Tag 0DNV]{stacks-project}.
\end{proof}

\begin{theorem} \label{thm:main-res}
    For $n \neq 1 \blank \rdt$ the morphism $\upsilon^{(m, n)} \colon \calA_{g, J, N}^{\wedge} \to \catpolchdisps_{g, J}^{(m, n)}$ is smooth.
    Similarly, the morphism $\upsilon^{(m, 1 \blank \rdt)} \colon (\calA_{g, J, N})_{\Fp} \to \catpolchdisps_{g, J}^{(m, 1 \blank \rdt)}$ is also smooth.
\end{theorem}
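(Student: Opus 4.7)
The plan is to verify the infinitesimal lifting criterion of \Cref{lem:infinitesimal-lifting}. By Serre-Tate theory, together with the unique lifting of prime-to-$p$ level structures across infinitesimal thickenings, the natural morphism $\Upsilon \colon \calA_{g,J,N}^{\wedge} \to \catpolchpdiv_{g,J}$ identifies the relevant deformation theories. Consequently, any lifting problem for $\upsilon^{(m,n)}$ along a surjection $B' \twoheadrightarrow B$ of Artinian local rings is equivalent to the induced lifting problem for the composition $\catpolchpdiv_{g,J} \to \catpolchdisps_{g,J} \to \catpolchdisps_{g,J}^{(m,n)}$; in particular whether or not $\upsilon^{(m,n)}$ is smooth at a point $y \in \abs{\calA_{g,J,N}^{\wedge}}$ depends only on the image $\Upsilon(y) \in \abs{\catpolchpdiv_{g,J}}$.

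First I would prove smoothness at every $y$ whose image lies in the formal locus $\catpolchpdiv_{g,J}^{\formal}$. By \Cref{prop:disp-p-div-chains} such a point corresponds under the equivalence $\catpolchpdiv_{g,J}^{\formal} \to \catpolchdisps_{g,J}^{F \blank \nilp}$ to an $F$-nilpotent polarized chain of displays, so the lifting problem reduces to lifting such a chain along the truncation map. \Cref{lem:polarized-disps-quotient-stack} shows that each truncation $\catpolchdisps_{g,J}^{(m',n')} \to \catpolchdisps_{g,J}^{(m,n)}$ with $(m,n) \leq (m',n')$ is smooth, so the natural morphism $\catpolchdisps_{g,J} \to \catpolchdisps_{g,J}^{(m,n)}$ is formally smooth and solves all such lifting problems. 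Since $F$-nilpotency is detected on the closed fiber, it is preserved by the lifted chain over $B'$, which therefore corresponds via \Cref{prop:disp-p-div-chains} to a formal polarized chain of $p$-divisible groups. Serre-Tate then converts this into the required deformation of the original polarized chain of abelian varieties with its level structure.

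To extend smoothness from the formal locus to all of $\calA_{g,J,N}^{\wedge}$, let $y \in \abs{\calA_{g,J,N}^{\wedge}}$ be arbitrary and set $x = \Upsilon(y)$. By \Cref{cor:specializing} there exists a preimage $y' \in \abs{\calA_{g,J,N}^{\wedge}}$ of $x$ that specializes to some $y''$ with $\Upsilon(y'') \in \abs{\catpolchpdiv_{g,J}^{\formal}}$. The first step gives smoothness of $\upsilon^{(m,n)}$ at $y''$; since the smooth locus is open, hence closed under generalization, it contains $y'$ as well. The Serre-Tate reduction then implies smoothness at $y$, because $\Upsilon(y) = \Upsilon(y') = x$.

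The main obstacle will be turning the Serre-Tate reduction into a genuine equivalence of deformation problems in the chain-polarized setting, in particular checking its compatibility with the chain of isogenies, the (homogeneous) polarization, and the level-$N$ structure simultaneously, and verifying that this holds uniformly in families rather than just pointwise. The case $n = 1 \blank \rdt$ requires only cosmetic modifications, since both source and target already live in characteristic $p$ and the rest of the argument carries over unchanged.
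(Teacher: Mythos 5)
Your proposal is correct and follows essentially the same route as the paper's proof: use the Serre--Tate theorem to reduce the infinitesimal lifting criterion to the level of polarized chains of $p$-divisible groups, establish smoothness over the formal locus via the equivalence with $F$-nilpotent chains of displays and the formal smoothness of the truncation morphisms, and then propagate to all points using \Cref{cor:specializing} together with the openness (hence stability under generalization) of the smooth locus. The only cosmetic difference is that the paper deduces the $n = 1 \blank \rdt$ case by composing with the smooth truncation morphism from \Cref{lem:polarized-disps-quotient-stack} rather than rerunning the argument in characteristic $p$.
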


\begin{proof}
    Assume that $n \neq 1 \blank \rdt$.
    The morphism $\upsilon^{(m, n)}$ factors as the composition
    \[
        \calA_{g, J, N}^{\wedge} \stackrel{\pi}\lra \catpolchpdiv_{g, J} \lra \catpolchdisps_{g, J} \lra \catpolchdisps_{g, J}^{(m, n)}.
    \]
    We have the following information: 
    \begin{itemize}
        \item
        $\pi \colon \calA_{g, J, N}^{\wedge} \to \catpolchpdiv_{g, J}$ is formally \'etale by the Serre--Tate theorem;  see \cite[Appendix]{drinfeld}.

        \item
        $\catpolchpdiv_{g, J} \to \catpolchdisps_{g, J}$ restricts to an isomorphism $\catpolchpdiv_{g, J}^{\formal} \to \catpolchdisps_{g, J}^{F \blank \nilp}$;  see \Cref{prop:disp-p-div-chains}.
        As $\catpolchdisps_{g, J}^{F \blank \nilp} \subseteq \catpolchdisps_{g, J}$ is stable under nilpotent thickenings, this implies that every lifting problem
        \[
        \begin{tikzcd}
            \Spec(B) \ar[r] \ar[d]
            & \catpolchpdiv_{g, J} \ar[d]
            \\
            \Spec(B') \ar[r] \ar[ru, dashed]
            & \catpolchdisps_{g, J}\rlap{,}
        \end{tikzcd}
        \]
        where $B' \to B$ is a surjective homomorphism of $p$-nilpotent rings with nilpotent kernel and the map $\Spec(B) \to \catpolchpdiv_{g, J}$ has image inside $\catpolchpdiv_{g, J}^{\formal}$, admits a (unique) solution.

        \item
        $\catpolchdisps_{g, J} \to \catpolchdisps_{g, J}^{(m, n)}$ is formally smooth; see \Cref{lem:polarized-disps-quotient-stack}.
    \end{itemize}
    Let $U \subseteq \abs{\calA_{g, J, N}^{\wedge}}$ be the smooth locus of $\upsilon^{(m, n)}$.
    Using \Cref{lem:infinitesimal-lifting} we can now deduce the following: 
    \begin{itemize}
        \item
        Every point $x \in \abs{\calA_{g, J, N}^{\wedge}}$ that maps into $\abs{\catpolchpdiv_{g, J}^{\formal}}$ is contained in $U$.

        \item
        $U$ is of the form $U = \pi^{-1}(V)$ for some $V \subseteq{\catpolchpdiv_{g, J}}$.
    \end{itemize}
    Now \Cref{cor:specializing}, together with the openness of $U \subseteq \abs{\calA_{g, J, N}^{\wedge}}$, implies that $V = \abs{\catpolchpdiv_{g, J}}$ and consequently that $U = \abs{\calA_{g, J, N}^{\wedge}}$ as desired.

    The case $n = 1 \blank \rdt$ now follows from \Cref{lem:disps-quotient-stack}.
\end{proof} 
    \appendix
\setcounter{section}{1}

\section*{Appendix. Categories with actions and dualities}
\addcontentsline{toc}{section}{Appendix. Categories with actions and dualities}

\setcounter{theorem}{0}

\subsection{Commutative monoids and actions}

Fix an ambient $(2, 1)$-category $\bsfC$ with finite $2$-products.
We write $\curlybr{\ast} \in \bsfC$ for an empty $2$-product.

\begin{definition}
    A \emph{commutative monoid in $\bsfC$} is an object $\calM \in \bsfC$ that is equipped with $1$-morphisms
    \[
        \otimes \colon \calM \times \calM \lra \calM, \quad 1 \colon \curlybr{\ast} \lra \calM
    \]
    and $2$-isomorphisms
    \[
         (a \otimes b) \otimes c \lra a \otimes (b \otimes c), \quad a \otimes b \lra b \otimes a, \quad 1 \otimes a \lra a
    \]
    between $1$-morphisms $\calM^i \to \calM$ for $i = 3, 2, 1$ such that the diagrams
    \[
    \begin{tikzcd}[column sep = tiny]
        ((a \otimes b) \otimes c) \otimes d \ar[d, equal] \ar[rr]
        & & (a \otimes (b \otimes c)) \otimes d \ar[rr]
        & & a \otimes ((b \otimes c) \otimes d) \ar[rr]
        & & a \otimes (b \otimes (c \otimes d)) \ar[d, equal]
        \\
        ((a \otimes b) \otimes c) \otimes d \ar[rrr]
        & & & (a \otimes b) \otimes (c \otimes d) \ar[rrr]
        & & & a \otimes (b \otimes (c \otimes d)) \rlap{,}
    \end{tikzcd}
    \]
    \[
    \begin{tikzcd}[column sep = small]
        a \otimes b \ar[rr, equal] \ar[rd]
        & & a \otimes b
        \\
        & b \otimes a\rlap{,} \ar[ru] &
    \end{tikzcd}
    \]
    \[
    \begin{tikzcd}
        (a \otimes b) \otimes c \ar[d, equal] \ar[r]
        & (b \otimes a) \otimes c \ar[r]
        & b \otimes (a \otimes c) \ar[r]
        & b \otimes (c \otimes a) \ar[d, equal]
        \\
        (a \otimes b) \otimes c \ar[r]
        & a \otimes (b \otimes c) \ar[r]
        & (b \otimes c) \otimes a \ar[r]
        & b \otimes (c \otimes a)\rlap{,}
    \end{tikzcd}
    \]
    \[
    \begin{tikzcd}[column sep = small]
        (1 \otimes a) \otimes b \ar[rr] \ar[rd]
        & & a \otimes b
        \\
        & 1 \otimes (a \otimes b) \ar[ru] &
    \end{tikzcd}
    \]
    are commutative.

    The commutative monoids in $\bsfC$ naturally form the class of objects of a $(2, 1)$-category that is given as follows.

    A $1$-morphism $F \colon \calM \to \calN$ between two commutative monoids in $\bsfC$ is a $1$-morphism $F \colon \calM \to \calN$ in $\bsfC$ that is equipped with $2$-isomorphisms $F(a \otimes b) \to F(a) \otimes F(b)$, $F(1) \to 1$ such that the diagrams
    \[
    \begin{tikzcd}
        F((a \otimes b) \otimes c) \ar[r] \ar[d]
        & F(a \otimes b) \otimes F(c) \ar[r]
        & (F(a) \otimes F(b)) \otimes F(c) \ar[d]
        \\
        F(a \otimes (b \otimes c)) \ar[r]
        & F(a) \otimes F(b \otimes c) \ar[r]
        & F(a) \otimes (F(b) \otimes F(c))\rlap{,}
    \end{tikzcd}
    \]
    \[
    \begin{tikzcd}
        F(a \otimes b) \ar[r] \ar[d]
        & F(a) \otimes F(b) \ar[d]
        \\
        F(b \otimes a) \ar[r]
        & F(b) \otimes F(a)\rlap{,}
    \end{tikzcd}
    \]
    \[
    \begin{tikzcd}
        F(1 \otimes a) \ar[r] \ar[rd]
        & F(1) \otimes F(a) \ar[r]
        & 1 \otimes F(a) \ar[ld]
        \\
        & F(a) &
    \end{tikzcd}
    \]
    are commutative.

    A $2$-isomorphism $\alpha \colon F \to G$ between two $1$-morphisms $F, G \colon \calM \to \calN$ as above is a $2$-isomorphism $\alpha \colon F \to G$ in $\bsfC$ such that the diagrams
    \[
    \begin{tikzcd}
        F(a \otimes b) \ar[r, "\alpha"] \ar[d]
        & G(a \otimes b) \ar[d]
        \\
        F(a) \otimes F(b) \ar[r, "\alpha \otimes \alpha"]
        & G(a) \otimes G(b)\rlap{,}
    \end{tikzcd}
    \]
    \[
    \begin{tikzcd}[column sep = small]
        F(1) \ar[rr, "\alpha"] \ar[rd]
        & & G(1) \ar[ld]
        \\
        & 1 &
    \end{tikzcd}
    \]
    are commutative.
\end{definition}

\begin{definition} \label{def:cat-action}
    Let $\calM$ be a commutative monoid in $\bsfC$.
    An \emph{$\calM$-object in $\bsfC$} is an object $\calC \in \bsfC$ that is equipped with a $1$-morphism
    \[
        \otimes \colon \calM \times \calC \lra \calC
    \]
    and $2$-isomorphisms
    \[
        (a \otimes b) \otimes x \lra a \otimes (b \otimes x), \quad 1 \otimes x \lra x
    \]
    such that the diagrams
    \[
    \begin{tikzcd}[column sep = tiny]
        ((a \otimes b) \otimes c) \otimes x \ar[d, equal] \ar[rr]
        & & (a \otimes (b \otimes c)) \otimes x \ar[rr]
        & & a \otimes ((b \otimes c) \otimes x) \ar[rr]
        & & a \otimes (b \otimes (c \otimes x)) \ar[d, equal]
        \\
        ((a \otimes b) \otimes c) \otimes x \ar[rrr]
        & & & (a \otimes b) \otimes (c \otimes x) \ar[rrr]
        & & & a \otimes (b \otimes (c \otimes x))\rlap{,}
    \end{tikzcd}
    \]
    \[
    \begin{tikzcd}[column sep = small]
        (1 \otimes a) \otimes x \ar[rr] \ar[rd]
        & & a \otimes x
        \\
        & 1 \otimes (a \otimes x) \ar[ru] &
    \end{tikzcd}
    \]
    are commutative.

    The tuples $(\calM, \calC)$ consisting of a commutative monoid $\calM$ in $\bsfC$ and an $\calM$-object $\calC$ in $\bsfC$ naturally form the class of objects of a $(2, 1)$-category that is given as follows.

    A $1$-morphism $(F, F') \colon (\calM, \calC) \to (\calN, \calD)$ consists of a $1$-morphism $F \colon \calM \to \calN$ of commutative monoids in $\bsfC$ and a $1$-morphism $F' \colon \calC \to \calD$ in $\bsfC$ that is equipped with a $2$-isomorphism $F'(a \otimes x) \to F(a) \otimes F'(x)$ such that the diagrams
    \[
    \begin{tikzcd}
        F'((a \otimes b) \otimes x) \ar[r] \ar[d]
        & F(a \otimes b) \otimes F'(x) \ar[r]
        & (F(a) \otimes F(b)) \otimes F'(x) \ar[d]
        \\
        F'(a \otimes (b \otimes x)) \ar[r]
        & F(a) \otimes F'(b \otimes x) \ar[r]
        & F(a) \otimes (F(b) \otimes F'(x))\rlap{,}
    \end{tikzcd}
    \]
    \[
    \begin{tikzcd}
        F'(1 \otimes x) \ar[r] \ar[rd]
        & F(1) \otimes F'(x) \ar[r]
        & 1 \otimes F'(x) \ar[ld]
        \\
        & F'(x) &
    \end{tikzcd}
    \]
    are commutative.

    A $2$-isomorphism $(\alpha, \alpha') \colon (F, F') \to (G, G')$ between two $1$-morphisms $(F, F'), (G, G') \colon (\calM, \calC) \to (\calN, \calD)$ as above consists of a $2$-isomorphism $\alpha \colon F \to G$ between $1$-morphisms of commutative monoids in $\bsfC$ and a $2$-isomorphism $\alpha' \colon F' \to G'$ in $\bsfC$ such that the diagram
    \[
    \begin{tikzcd}
        F'(a \otimes x) \ar[r, "\alpha'"] \ar[d]
        & G'(a \otimes x) \ar[d]
        \\
        F(a) \otimes F'(x) \ar[r, "\alpha \otimes \alpha'"]
        & G(a) \otimes G'(x)
    \end{tikzcd}
    \]
    is commutative.
\end{definition}

\subsection{Categories with duality}

\begin{definition} \label{def:cat-duality}
    A \emph{category with duality} is a category $\calC$ that is equipped with an equivalence of categories
    \[
        \calC^{\op} \lra \calC, \quad x \longmapsto x^{\vee}
    \]
    and a natural isomorphism
    \[
        x \lra x^{\vee \vee}
    \]
    between functors $\calC \to \calC$ such that the diagram
    \[
    \begin{tikzcd}[column sep = small]
        x^{\vee} \ar[rr, equal] \ar[rd]
        & & x^{\vee}
        \\
        & x^{\vee \vee \vee} \ar[ru] &
    \end{tikzcd}
    \]
    is commutative (where the sloped arrow on the right is the image of the isomorphism $x \to x^{\vee \vee}$ under the duality equivalence above).

    A functor $F \colon \calC \to \calD$ between categories with duality is a functor $F \colon \calC \to \calD$ that is equipped with a natural isomorphism $F(x^{\vee}) \to F(x)^{\vee}$ such that the diagram
    \[
    \begin{tikzcd}
        & F(x) \ar[dl] \ar[dr] &
        \\
        F(x^{\vee \vee}) \ar[r]
        & F(x^{\vee})^{\vee} \ar[r]
        & F(x)^{\vee \vee}
    \end{tikzcd}
    \]
    is commutative (where the second horizontal arrow denotes the inverse of the dual of the isomorphism $F(x^{\vee}) \to F(x)^{\vee}$).

    A natural isomorphism $\alpha \colon F \to G$ between two functors $F, G \colon \calC \to \calD$ as above is a natural isomorphism $\alpha \colon F \to G$ such that the diagram
    \[
    \begin{tikzcd}
        F(x^{\vee}) \ar[r, "\alpha"] \ar[d]
        & G(x^{\vee}) \ar[d]
        \\
        F(x)^{\vee} \ar[r, "\alpha^{\vee, -1}"]
        & G(x)^{\vee}
    \end{tikzcd}
    \]
    is commutative.

    In this way the categories with duality naturally form the class of objects of a $(2, 1)$-category.
\end{definition}

\begin{remark} \label{rmk:cat-action-duality}
    The $(2, 1)$-category of (essentially small) categories with duality has finite $2$-products, and the natural forgetful $2$-functor
    \[
        \curlybr*{\text{categories with duality}} \lra \curlybr*{\text{categories}}
    \]
    preserves finite $2$-products.

    Thus we obtain the notions of a \emph{symmetric monoidal category with duality} and an \emph{action of such a symmetric monoidal category with duality on a category with duality}.
\end{remark}

\begin{remark} \label{rmk:cat-rigid-duality}
    Every rigid symmetric monoidal category naturally carries a duality.
    Thus we obtain a natural $(2, 1)$-functor
    \[
        \curlybr*{\text{rigid symmetric monoidal categories}} \lra \curlybr*{\text{symmetric monoidal categories with duality}}.
    \]
\end{remark}

\begin{remark} \label{rmk:cat-lambda}
    Let $\Lambda$ be a commutative ring.
    Then the notions of a category with duality, a symmetric monoidal category (with duality) and an action of a symmetric monoidal category (with duality) on a category (with duality) have obvious $\Lambda$-linear variants that are obtained by requiring all appearing functors to be $\Lambda$-(multi-)linear.
\end{remark}

\begin{definition}
    Let $\calC$ be a $\ZZ$-linear category with duality.
    Then a morphism $f \colon x \to x^{\vee}$ in $\calC$ is called \emph{antisymmetric} if the diagram
    \[
    \begin{tikzcd}[column sep = small]
        x \ar[rr] \ar[rd, swap, "-f"]
        & & x^{\vee \vee} \ar[ld, "f^{\vee}"]
        \\
        & x^{\vee} &
    \end{tikzcd}
    \]
    is commutative.
    We define the groupoid $\catpol(\calC)$ of \emph{polarized objects in $\calC$} as the groupoid of tuples $(x, \lambda)$ with $x \in \calC$ and $\lambda \colon x \to x^{\vee}$ an antisymmetric isomorphism, where isomorphisms $(x, \lambda) \to (y, \zeta)$ are given by isomorphisms $x \to y$ in $\calC$ such that the diagram
    \[
    \begin{tikzcd}
        x \ar[r] \ar[d, "\lambda"]
        & y \ar[d, "\zeta"]
        \\
        x^{\vee} \ar[r]
        & y^{\vee}
    \end{tikzcd}
    \]
    is commutative (where the lower horizontal arrow is the inverse of the dual of $x \to y$).

    Now suppose that $\calC$ is equipped with an action of a $\ZZ$-linear rigid symmetric monoidal category $\calM$ (see \Cref{rmk:cat-rigid-duality}).
    Then a morphism $f \colon x \to a \otimes x^{\vee}$ in $\calC$ with $a \in \calM$ invertible is called \emph{antisymmetric} if the diagram
    \[
    \begin{tikzcd}
        x \ar[r] \ar[rd, swap, "-f"]
        & x^{\vee \vee} \ar[r]
        & a \otimes (a \otimes x^{\vee})^{\vee} \ar[ld, "\id_a \otimes f^{\vee}"]
        \\
        & a \otimes x^{\vee} &
    \end{tikzcd}
    \]
    is commutative.
    In the case $a = 1$ this recovers the first definition above.
    We define the groupoid $\cathpol(\calC) = \cathpol^{\calM}(\calC)$ of \emph{$(\calM$-$)$homogeneously polarized objects in $\calC$} as the groupoid of tuples $(x, a, \lambda)$ with $x \in \calC$, $a \in \calM$ invertible and $\lambda \colon x \to a \otimes x^{\vee}$ an antisymmetric isomorphism, where isomorphisms $(x, a, \lambda) \to (y, b, \zeta)$ are given by tuples of isomorphisms $x \to y$ in $\calC$ and $a \to b$ in $\calM$ such that the diagram
    \[
    \begin{tikzcd}
        x \ar[r] \ar[d, "\lambda"]
        & y \ar[d, "\zeta"]
        \\
        a \otimes x^{\vee} \ar[r]
        & b \otimes y^{\vee}
    \end{tikzcd}
    \]
    is commutative.
\end{definition}

\begin{lemma} \label{lem:pol-hpol-pullback}
    Let $\calC$ be a $\ZZ$-linear category with duality that is equipped with an action of a $\ZZ$-linear rigid symmetric monoidal category $\calM$.
    Then there is a natural $2$-Cartesian diagram of groupoids
    \[
    \begin{tikzcd}
        \catpol(\calC) \ar[r] \ar[d]
        & \cathpol(\calC) \ar[d]
        \\
        \curlybr{\ast} \ar[r]
        & \calM^{\simeq}\rlap{,}
    \end{tikzcd}
    \]
    where $\calM^{\simeq}$ denotes the groupoid core of $\calM$.
\end{lemma}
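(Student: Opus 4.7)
The plan is to unwind both sides as explicit groupoids of tuples and exhibit inverse equivalences between $\catpol(\calC)$ and the strict $2$-fiber product $\curlybr{\ast} \times_{\calM^{\simeq}} \cathpol(\calC)$, where the right vertical morphism sends $(x, a, \lambda)$ to $a$ and the bottom morphism picks out the unit object $1 \in \calM$. By definition, objects of this $2$-fiber product are tuples $(x, a, \lambda, \phi)$ with $(x, a, \lambda) \in \cathpol(\calC)$ and $\phi \colon 1 \to a$ an isomorphism in $\calM$, and morphisms are pairs of isomorphisms $x \to y$, $a \to b$ that are compatible with the polarizations and intertwine the trivializations $\phi$ and $\psi$.

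First I would define the functor $\Phi \colon \catpol(\calC) \to \curlybr{\ast} \times_{\calM^{\simeq}} \cathpol(\calC)$ by sending $(x, \lambda)$ to $(x, 1, \widetilde{\lambda}, \id_1)$, where $\widetilde{\lambda}$ is the composition of $\lambda \colon x \to x^{\vee}$ with the inverse of the unit isomorphism $1 \otimes x^{\vee} \to x^{\vee}$. In the opposite direction, given $(x, a, \lambda, \phi)$ I would define $\Psi$ by sending it to $(x, \lambda')$ where $\lambda'$ is the composition
\[
    x \xrightarrow{\lambda} a \otimes x^{\vee} \xrightarrow{\phi^{-1} \otimes \id} 1 \otimes x^{\vee} \to x^{\vee}.
\]
On morphisms, $\Psi$ simply forgets the $\calM$-component (which is forced to equal $\id_1$ via the trivializations on either end, up to conjugation by $\phi$ and $\psi$).

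The key verifications are that $\lambda'$ is antisymmetric in the sense of \Cref{def:cat-duality} whenever $\lambda$ is antisymmetric in the homogeneous sense, and vice versa for $\widetilde{\lambda}$. This is a purely formal diagram chase using the coherence data of the duality and the $\calM$-action, together with the identification $\id_1^{\vee} = \id_1$; here one uses that $\calM$ is rigid symmetric monoidal so that the canonical morphism $1 \to 1^{\vee}$ is the identity after the chosen identification (cf.\ \Cref{rmk:cat-rigid-duality}).

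Finally I would check that $\Psi \circ \Phi \cong \id$ and $\Phi \circ \Psi \cong \id$. The first direction is essentially tautological since $\Phi$ already lands in the sub-groupoid where $a = 1$ and $\phi = \id_1$. For the other direction one uses the pair $(\id_x, \phi) \colon (x, 1, \Phi\Psi(\lambda), \id_1) \to (x, a, \lambda, \phi)$, which is a natural isomorphism in the $2$-fiber product precisely because of how we defined $\lambda'$. The main obstacle is nothing conceptual; it is just the bookkeeping needed to see that the antisymmetry conditions transport correctly under the associativity and unit coherence isomorphisms of the $\calM$-action, and that the morphism-level compatibility squares from \Cref{def:cat-action} give the desired equivalence of groupoids.
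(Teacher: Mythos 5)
Your proof is correct, but it takes a more explicit route than the paper. The paper's own argument is very short: it observes that the square is \emph{strictly} commutative and \emph{strictly} Cartesian (the strict fiber product being the subgroupoid of $\cathpolname(\calC)$ where $a = 1$ and the $\calM$-component of every morphism is $\id_1$, which is exactly $\catpolname(\calC)$ after the unit identification $1 \otimes x^{\vee} \cong x^{\vee}$), and that the right vertical arrow $(x, a, \lambda) \mapsto a$ is a fibration of groupoids; a strict pullback along an isofibration is automatically a $2$-pullback. You instead unwind the pseudo-pullback $\curlybr{\ast} \times_{\calM^{\simeq}} \cathpolname(\calC)$ as tuples $(x, a, \lambda, \phi)$ and build the equivalence with $\catpolname(\calC)$ by hand, transporting $\lambda$ along $\phi^{-1}$. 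The two arguments have the same mathematical core --- in both cases the only real content is that antisymmetry of $\lambda \colon x \to a \otimes x^{\vee}$ is preserved when one transports along an isomorphism $\phi \colon 1 \to a$ (this is precisely what makes the paper's vertical arrow a fibration, and what makes your $\lambda'$ land in $\catpolname(\calC)$) --- but the fibration criterion lets the paper avoid writing out the unit/associativity coherence chases that your version has to carry explicitly. Your approach is marginally more self-contained since it does not presuppose the fact that strict pullbacks along isofibrations compute $2$-pullbacks; the paper's is more economical. One small point worth making precise in your write-up: in the inverse functor $\Psi$ the $\calM$-component of a morphism is forced to be $\psi \circ \phi^{-1}$ (not $\id_1$ literally), which is why forgetting it loses no information.

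(Here I write $\catpolname$ and $\cathpolname$ for the groupoids denoted $\mathrm{Pol}$ and $\mathrm{hPol}$ in the statement of the lemma.)
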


\begin{proof}
    The upper horizontal arrow is given by $(x, \lambda) \mapsto (x, 1, \lambda)$, the right vertical arrow by $(x, a, \lambda) \mapsto a$ and the lower horizontal arrow by $\ast \mapsto 1$.
    The diagram is in fact strictly commutative, so  we can use the identity as a commutativity constraint.

    Now the diagram is strictly Cartesian, and the right vertical arrow is a fibration (both claims can be checked by a direct computation), so  the diagram is also $2$-Cartesian.
\end{proof}


\providecommand{\bysame}{\leavevmode\hbox to3em{\hrulefill}\thinspace}

\end{document}